\newtheorem{thm}{Theorem}[section]
\newtheorem{prop}[thm]{Proposition}
\newtheorem{lem}[thm]{Lemma}
\newtheorem{qu}[thm]{Question}
\newtheorem{ex}[thm]{Example}
\newtheorem{conj}{Conjecture}
\newcommand{\comment}[1]{}
\newcommand{\tq}{\ge_T}
\newcommand{\te}{=_T}
\newcommand{\gtq}{\ge_{T}}
\newcommand{\gte}{=_{T}}
\newcommand{\K}{\mathcal{K}}
\newcommand{\omom}{\omega^\omega}
\newcommand{\cl}[1]{\overline{#1}}
\DeclareMathOperator{\ci}{c}
\DeclareMathOperator{\nci}{\neg c}
\DeclareMathOperator{\cof}{cof}
\DeclareMathOperator{\add}{add}
\title{The Shape of  Generating Families}
\author{Ziqin Feng and Paul Gartside}
\date{}
\begin{document}

\maketitle

\begin{abstract}
The topology of a space $X$ is \emph{generated} by a family $\mathcal{C}$ of its subsets provided that a set $A\subseteq X$ is closed in $X$ if and only if $A\cap C$ is closed in $C$ for each $C\in \mathcal{C}$.
A space $X$ is a \emph{$k$-space} (respectively, \emph{sequential})  if its topology is generated by the collection of all compact subsets (respectively, convergent sequences) of $X$.

Relations are defined to capture the notion of a space being a $k$-space or sequential.
The structure (or `shape') under the Tukey order of these relations applied to separable metrizable spaces  is examined.  For the $k$-space case the initial structure is completely determined, and the cofinal structure is shown to be highly complex. In the sequential case, however, the entire shape is determined. It follows  that the number of Tukey types in the sequential case lies between $\aleph_0$ and $\mathfrak{c}$, is equal to $\aleph_0$ precisely when $\mathfrak{c} < \aleph_{\omega_1}$, and is equal to $\mathfrak{c}$ if and only if $\mathfrak{c}$ is a fixed point of the aleph function, necessarily of  uncountable cofinality.

\smallskip
Keywords: $k$-space, compactly generated, sequential, separable metrizable,  Tukey order, cardinal characteristics of the continuum, small cardinals, cardinal invariants.

MSC Classification: 03E04, 03E17, 06A07, 54A25, 54D45, 54D50,  54D55, 54E35.
\end{abstract}

\section{Introduction}
In his survey article, \cite{vD:hbk}, on cardinal characteristics of the continuum and small cardinals arising in topology, van Douwen introduced three such invariants of a separable metrizable space, $M$, namely $\cof(\K(M))$, $\mathop{kc}(M)$ and $\mathop{k}(M)$. Each invariant asks for the minimum size of a family of compact subsets of $M$ with certain properties.
The third invariant, $\mathop{k}(M)$, requires that the compact subsets witness the $k$-space property of $M$. The purpose of the present paper is to understand  not just the size, but the `shape', $\mathbf{k}(M)$, of compact families witnessing the $k$-space property ($k$-structures), and the `shape', $\mathbf{seq}(M)$, of families of convergent sequences witnessing sequentiality (sequential structures), of a separable metrizable space.
We uncover the initial organization
of $k$-structures (for small or simple $M$), but we show, in ZFC, that there is a huge number, $2^\mathfrak{c}$,  of $k$-structures, organized in a highly complex way that we can have no hope of understanding in detail.
In striking contrast, we establish, in ZFC and in almost complete detail, the organization of sequential structures of separable metrizable spaces. A notable consequence is that there are continuum, $\mathfrak{c}$, many distinct sequential structures (for separable metrizable spaces) if and only if $\mathfrak{c}$ is a fixed point of the aleph-function. The smallest possible value this could occur is the $\omega_1$st-fixed point of the aleph function, and this is consistent.
The second author and Thomas Gilton have applied these results, see \cite{GG-CofGen}, to compute $k(M)$ and $\mathop{seq}(M)$, the cofinality of $\mathbf{seq}(M)$, for many separable metrizable spaces, $M$, revealing a connection to Shelah's \textsc{PCF} theory.

To motivate our results we review the small cardinals, $\mathfrak{b}$ and $\mathfrak{d}$, and examine the first two of van Douwen's invariants.
As usual, by $\omega$ we mean the initial infinite ordinal (so, depending on context, it is interpreted as $\aleph_0$, the first infinite cardinal, or as a  well-ordered set), and $\omega_1$ the initial uncountable ordinal. Then $\omom=(\omom,\le)$ is all functions of $\omega$ to itself with the product order, and $(\omom,\le^*)$ is the same set but with  the mod-finite order.
The \emph{bounding number}, $\mathfrak{b}$, is the minimum size of an unbounded set in $(\omom,\le^*)$, while the \emph{dominating number}, $\mathfrak{d}$, is the minimum size of a cofinal set in $(\omom,\le^*)$, or equivalently, in $(\omom,\le)$. Note $\omega_1 \le \mathfrak{b} \le \mathfrak{d} \le \mathfrak{c}$.

The first of van Douwen's invariants, $\cof(\K(M))$, is the cofinality of the directed set $\K(M)$ of all compact subsets of $M$, ordered by inclusion, $\subseteq$. Van Douwen observed that $\cof(\K(M))=1$ precisely when $M$ is compact, while $\cof(\K(M)) \le \omega$ if and only if $M$ is locally compact, but if $M$ is not locally compact then $\mathfrak{d} \le \cof(\K(M)) \le \mathfrak{c}$, and he computed $\cof(\K(M))$ for certain specific separable metrizable $M$.
In fact the underlying reason for these results had already been unearthed by Fremlin \cite{Fr2}. If $P$ and $Q$ are two directed sets, then $P$ Tukey quotients to $Q$, denoted $P \tq Q$, if there is a map $\phi:P \to Q$ carrying cofinal sets of $P$ to cofinal sets of $Q$. Evidently, if $P \tq Q$ then $\cof(P) \ge \cof(Q)$, and the relations that van Douwen mentioned between various $\cof(\K(M))$ were all due to a corresponding Tukey quotient. See \cite{GM1, GM2} for further results on  $\K(M)$'s and the Tukey relation.

The second invariant, $\mathop{kc}(M)$, is the minimal size of a compact cover of the separable metrizable space, $M$.
Van Douwen noted that, always, $ \cof(\K(M)) \ge \mathop{kc}(M)$, while $\mathop{kc}(M) \le \omega$ precisely when $M$ is $\sigma$-compact, and again he  computed $\mathop{kc}(M)$ for various $M$.
This invariant can also be interpreted as the cofinality of a suitable object,  but not of a directed set. Further, the connections and computations are due to morphisms between the relevant objects.
The objects are relations.
By a relation we mean a triple $\mathbf{A}=(A_-,A_+,A)$, where $A_-$ is the domain, $A_+$ the range, and $A$ is a collection of pairs.
We write, as usual, $xAy$ for $(x,y) \in A$.
Every directed set is a relation (where the domain and range happen to coincide), and we borrow concepts about directed sets appropriately. In particular, a subset $C$ of $A_+$ is \emph{cofinal} in $\mathbf{A}$ if for every $x$ from $A_-$ there is a $y$ from $C$ such that $xAy$, and  the cofinality of $\mathbf{A}$, denoted $\cof(\mathbf{A})$, is the minimal size of a cofinal set.
Letting $\mathbf{kc}(M)$ be the relation $(M,\K(M),\in)$, we see
 $\mathop{kc}(M)=\cof(\mathbf{kc}(M))$.
The idea that many cardinal characteristics of the continuum could be viewed as the cofinality of a suitable relation, and that the connections between the small cardinals were due to suitable morphisms was implicit in the work of Miller, Fremlin and others, but was made explicit by Vojtas \cite{Voj}. In particular, the well known Cichon's diagram follows from a corresponding diagram of morphisms of relations, see \cite{FremCich}.
This was clarified by Blass, who made extensive use of relations and morphisms in his survey on small cardinals \cite{BlassHB}.
In Blass' notation (simplified by Lemma~\ref{l:ulm} below) a Tukey morphism from $\mathbf{A}$ to $\mathbf{B}$ is a map $\phi_+:A_+ \to B_+$ carrying cofinal sets to cofinal sets. If such a morphism exists we write $\mathbf{A} \gtq \mathbf{B}$.
The relation $\gtq$ is transitive, hence $\gte$, defined by $\mathbf{A} \gte \mathbf{B}$ if and only if $\mathbf{A} \gtq \mathbf{B}$ and $\mathbf{B} \gtq \mathbf{A}$, is an equivalence relation.
The organization of $\mathbf{kc}(M)$'s under $\gtq$ is explored in \cite{FG-ShapeCptCovers,GM1}.

Now we turn back to van Douwen's invariant, $\mathop{k}(M)$, connected to the $k$-space property.
The topology of a space $X$ is said to be \emph{generated} by a family $\mathcal{C}$ of its subsets provided that a set $A\subseteq X$ is closed in $X$ if and only if $A\cap C$ is closed in $C$ for each $C\in \mathcal{C}$.
Observe this is equivalent to saying if $F$ is a subset of $X$  which is not closed  then there is a $C$ in $\mathcal{C}$ such that $F \cap C$ is not closed in $C$.
A space $X$ is a \emph{$k$-space}, or \emph{compactly generated},  if its topology is generated by the collection of all compact subsets of $X$.
While $X$ is a \emph{sequential} space, or \emph{sequential},  if its topology is generated by the collection of all (non-trivial) convergent sequences.
We define  relations to capture the notion of a space being compactly generated or sequential.
Let $X$ be a space, and let $\ci$ be defined for subsets $S$ and $T$ of $X$ by, $S \ci T$ if and only if $S \cap T$ is  closed in $T$, so
$S \nci T$ if and only if $S \cap T$ is not closed in $T$.
Set $\mathop{NC}(X)$ to be all subsets of $X$ that are not closed, and let
 $\mathop{CS^+}(X)$ be all infinite convergent sequences with limit.
Now we see that $X$ is a $k$-space if and only if $\mathbf{k}(X)=(\mathop{NC}(X),\K(X),\nci)$ has a cofinal set, and more generally, that cofinal sets correspond to generating compact collections.
We call $\mathbf{k}(X)$ the \emph{$k$-structure} of $X$, and note $\mathop{k}(X)=\cof(\mathbf{k}(X))$.
Similarly, $X$ is sequential if and only if $\mathbf{seq}(X)=(NC(X),\mathop{CS}^+(X),\nci)$ has a cofinal set, and more generally cofinal sets correspond to generating collections of (infinite) convergent sequences.
We call $\mathbf{seq}(X)$ the \emph{sequential structure} of $X$.

By $\mathbf{k}(\mathcal{M})$ we mean the  Tukey types ($\gte$ equivalence classes) of $\mathbf{k}(M)$, where $M$ is separable metrizable, ordered by $\gtq$.
Our results on the shape of $(\mathbf{k}(\mathcal{M}),\gtq)$, obtained in Section~\ref{s:k}, are summarized in the following diagram. Here types increase in the  Tukey order from left to right (simplest relations to the left, most complex to the right). A black line between two types indicates both the order, and the fact that there is no type strictly between them.
In the diagram $M$ denotes a separable metrizable space, $M'$ is the set of non-isolated points, $M^\#$ the set of points of non-local compactness. Recall $\omega$ is well-ordered, for the special relation $(\omom,\le_\infty)$ see Section~\ref{ss:ssr}, $\omom$ has the product order, and for the operation on relations, $\&$, see Section~\ref{ss:sops}.
The diagram highlights our complete understanding of the small types of $\mathbf{k}(M)$, but also the complexity and size of larger types (the shaded areas, containing order-isomorphic copies of the $2^\mathfrak{c}$-antichain and the unit interval under the usual order, for example).
An area of particular ambiguity are $\mathbf{k}(M)$ strictly above $(\omom,\le_\infty) \times \omega$ but incomparable with $\omom$. We know such $M$ must be Menger but not $\sigma$-compact. We know it is consistent there are many such $M$. But we are far from finding ZFC examples, if they exist.

\begin{figure}[H]
\caption{$(\mathbf{k}(\mathcal{M}),\le_{T})$, for separable metrizable $M$}
\noindent\begin{tikzpicture}
\shadedraw[left color=cyan, right color=white,draw=white]  (6.5,0) to [out=80,in=180]  (11.5,3.0) to  (11.5,-3.0) to [out=180,in=-80]  cycle;

\shadedraw[left color=blue!40, right color=white,draw=white]  (8.5,0) to [out=90,in=180]  (11.5,2.5) to  (11.5,-2.5) to [out=180,in=-90]  cycle;

\draw (0,0) -- (1.5,0);
\node[fill=red,circle,inner sep=1.5] (z) at (0,0) [label=above:$\mathbf{0}$,label=below:{\footnotesize discrete}] {};

\draw (1.5,0) -- (3,2) -- (4.25,0) -- (3,-2) -- cycle;
\draw (4.25,0) -- (8.5,0);
\node[fill=red,circle,inner sep=1.5] (one) at (1.5,0) [label=above:$\mathbf{1}$,label={[text width=1cm,execute at begin node=\setlength{\baselineskip}{1ex},fill=white,inner sep=1.5] below:{\footnotesize  compact $\oplus$\,discrete}}] {};

%\node at (1.5,-0.75) {\footnotesize $\oplus$discrete};

\node[fill=red,circle,inner sep=1.5] (F) at (3,2) [label=right:{$(\omom,\le_\infty)$},label=above left:{\footnotesize $M'$ compact, $M^\#\ne\emptyset$}] {};

\node[fill=red,circle,inner sep=1.5] (lc) at (3,-2) [label=right:{$\omega$},label=below:{\footnotesize $M'$ not compact, $M^\#=\emptyset$}] {};

\node[fill=red,circle,inner sep=1.5] (Msh) at (4.25,0) [label={[fill=white,fill opacity=1.0, text opacity =1, inner sep=2] above:{$(\omom,\le_\infty) \with \omega$}},label={[fill=white,fill opacity=1.0, text opacity =1, inner sep=2] below:{\footnotesize $M^\#$ compact}}] {};

\node[fill=red,circle,inner sep=1.5] (SK) at (6.5,0) [label={[fill=white,fill opacity=0.75, text opacity =1] above:{$(\omom,\le_\infty) \times \omega$}},label={[fill=white,fill opacity=0.75, text opacity =1,text width=2.325cm,execute at begin node=\setlength{\baselineskip}{1.5ex}] below:{\footnotesize $M'$ $\sigma$-compact  $M^\#$ not compact}}] {};

%\node at (6.5,-0.75) {\footnotesize  $M^\#$ not compact};

\node[fill=red,circle,inner sep=1.5] (A) at (8.5,0) [label={[fill=white,fill opacity=0.75, text opacity =1] above:{$\omom$}},label={[fill=white,fill opacity=0.75, text opacity =1,text width=1.75cm,inner sep=1,execute at begin node=\setlength{\baselineskip}{1.5ex}] below right:{\footnotesize analytic not $\sigma$-compact}}] {};
%\node[fill=white,fill opacity=0.5, text opacity =1] at (8.9,-0.75) {\footnotesize  not $\sigma$-compact};

\node[fill=white,fill opacity=0.75, text opacity =1,text width=2.03cm,inner sep=1,execute at begin node=\setlength{\baselineskip}{1.5ex}] (Men) at (6.5,-2.25) {\footnotesize (consistently) $\exists$ many  Menger not $\sigma$-compact};

\draw[->,thin] (Men.east) to[out=20,in=200] (7.85,-1.25);

\node[fill=white,fill opacity=0.75, text opacity =1,text width=2.05cm,inner sep=1,execute at begin node=\setlength{\baselineskip}{1.5ex}] (NotMen) at (10.15,-1.35) {\footnotesize all not Menger};

\draw[red,dashed] (11.2,2)--(11.2,-2.0);

\node[rounded corners,fill=red!10,fill opacity=0.75, text opacity =1,text width=1.6cm,inner sep=1,execute at begin node=\setlength{\baselineskip}{1.5ex}] (NotMen) at (10.25,1.75) {\footnotesize $2^\mathfrak{c}$ antichain};

\node[rounded corners, fill=red!10,fill opacity=0.75, text opacity =1,text width=1.7cm,inner sep=1,execute at begin node=\setlength{\baselineskip}{1.5ex}] (NotMen) at (10.1,0.9) {\footnotesize copies of $\mathfrak{c}^+$, $(\mathbb{P}(\omega),\subseteq)$, $I$};

\end{tikzpicture}
\end{figure}
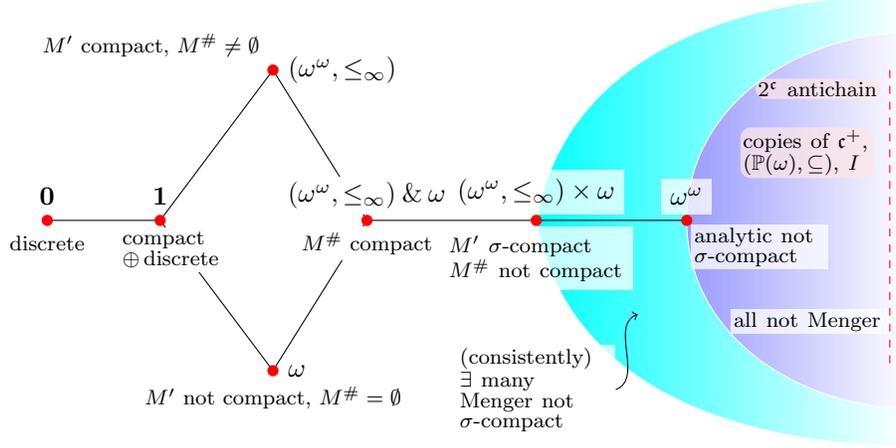

Similarly  $\mathbf{seq}(\mathcal{M})$ denotes the  Tukey types of $\mathbf{seq}(M)$ for $M$ separable metrizable, ordered by $\gtq$.
But in this case we do not have shaded regions of high, and poorly understood, complexity.
Instead, as established in Section~\ref{s:seq} the types are ranked by the cardinality, $\aleph_\alpha$, of the relevant separable metrizable space, with only those of cardinality with countable cofinality contributing more than a single type.

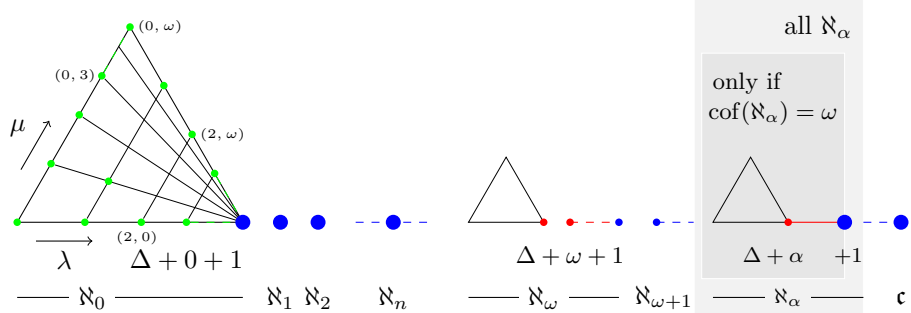
\begin{figure}[H]
\caption{$(\mathbf{seq}(\mathcal{M}),\le_{T})$, for $M$ with $|M|=\aleph_\alpha$}
\noindent\begin{tikzpicture}

\coordinate (Or) at (0,0);
\coordinate (Top) at (60:3cm);
\coordinate (Rt) at (0:3cm);

\draw[thin] (Or) -- (Top) -- (Rt) -- cycle;

\node[fill=green,inner sep=1,circle]  at (Or) {};

\node[fill=green,inner sep=1,circle]  at (Top) {};

\node[fill=green,inner sep=1,circle] at (barycentric cs:Or=0.7,Top=0.0,Rt=0.3) (OZ) {};

\node[fill=green,inner sep=1,circle] at (barycentric cs:Or=0.45,Top=0.0,Rt=0.55) (TwoZ) {};

\node[fill=green,inner sep=1,circle] at (barycentric cs:Or=0.25,Top=0.0,Rt=0.75) (ThreeZ) {};

\node at ($(TwoZ.west)+(0,-0.2)$) {\tiny $(2,0)$};

\node[fill=green,inner sep=1,circle] at (barycentric cs:Or=0.0,Top=0.7,Rt=0.3) (OOm) {};

\node at ($(Top.north)+(0.4,0)$) {\tiny $(0,\omega)$};

\node[fill=green,inner sep=1,circle] at (barycentric cs:Or=0.0,Top=0.45,Rt=0.55) (TwoOm) {};

\node[fill=green,inner sep=1,circle] at (barycentric cs:Or=0.0,Top=0.25,Rt=0.75) (ThreeOm) {};

\node at ($(TwoOm.east)+(0.35,0)$$) {\tiny $(2,\omega)$};

\node[fill=green,inner sep=1,circle] at (barycentric cs:Or=0.7,Top=0.3,Rt=0.0) (ZO) {};

\node[fill=green,inner sep=1,circle] at (barycentric cs:Or=0.45,Top=0.55,Rt=0.0) (ZTwo) {};

\node[fill=green,inner sep=1,circle] at (barycentric cs:Or=0.25,Top=0.75,Rt=0.0) (ZThree) {};

\node at ($(ZThree.west)+(-0.3,0)$) {\tiny $(0,3)$};

\draw[very thin] (barycentric cs:Or=0.1,Top=0.9,Rt=0.0) -- (Rt);

\draw[very thin] (ZO) -- (Rt);
\draw[very thin] (ZTwo) -- (Rt);
\draw[very thin] (ZThree) -- (Rt);

\draw[very thin] (OZ) -- (OOm);
\draw[very thin] (TwoZ) -- (TwoOm);
\draw[very thin] (ThreeZ) -- (ThreeOm);

\draw[dashed,green] (ThreeZ) -- (Rt);
\draw[dashed,green] (ThreeOm) -- (Rt);
\draw[dashed,green] (ZThree) -- (Top);

\node[fill=green,inner sep=1,circle] at ($(OZ)!0.3!(OOm)$)  {};

\node[fill=blue,inner sep=2,circle] at (Rt) {};

\draw[->] (0.25,-0.25) -- (1,-0.25) node[midway,below] {$\lambda$};

\draw[->] (0.05,+0.7) -- ++(60:0.75) node[midway,above left] {$\mu$};

\draw[thin] (0,-1) -- (3,-1);
\node[fill=white] at (1,-1) {$\aleph_0$};

\node at (2.25,-0.5) {$\Delta+0+
1$};

\node[fill=white] at (3.5,-1) {$\aleph_1$};
\node[fill=blue,inner sep=2,circle] at (3.5,0) {};
\node[fill=white] at (4.0,-1) {$\aleph_2$};
\node[fill=blue,inner sep=2,circle] at (4.0,0) {};
\node[fill=white] at (5.0,-1) {$\aleph_n$};
\node[fill=blue,inner sep=2,circle] (An) at (5.0,0) {};

\draw[blue,very thin,dashed] (4.5,0) -- (5.5,0);

\draw[thin] (6.0,-1) -- (8,-1);
\node[fill=white] at (7,-1) {$\aleph_\omega$};

\coordinate (OrOm) at (6.0,0);
\coordinate (TopOm) at ($(OrOm)+(60:1cm)$);
\coordinate (RtOm) at ($(OrOm)+(0:1cm)$);

\draw[very thin] (OrOm) -- (TopOm) -- (RtOm) -- cycle;
\node[fill=red,inner sep=1,circle] at (RtOm) {};
\node[fill=red,inner sep=1,circle] at (7.35,0) {};
\draw[very thin,red,dashed] (7.35,0) -- (8,0);
\node[fill=blue,inner sep=1,circle] at (8.0,0) {};

\node at (7.35,-0.45) {\small $\Delta+\omega+1$};

\node[fill=white] at (8.6,-1) {$\aleph_{\omega+1}$};
\node[fill=blue,inner sep=1,circle] at (8.5,0) {};
\draw[very thin,blue,dashed] (8.5,0) -- (9.0,0);

\coordinate (OrAlpha) at (9.25,0);
\coordinate (TopAlpha) at ($(OrAlpha)+(60:1cm)$);
\coordinate (RtAlpha) at ($(OrAlpha)+(0:1cm)$);

\draw[fill=gray!10,draw=white] (9.0,-1.25) -- (11.25,-1.25) -- (11.25,3) -- (9.0,3) -- cycle;

\draw[fill=gray!20,draw=white] (9.1,-0.75) -- (11.0,-0.75) -- (11.0,2.25) -- (9.1,2.25) -- cycle;

\node at (10.65,2.6) {all $\aleph_\alpha$};

\draw[very thin] (OrAlpha) -- (TopAlpha) -- (RtAlpha) -- cycle;
\node[fill=red,inner sep=1,circle] at (RtAlpha) {};
%\node[fill=blue,inner sep=1,circle] at (10.75,0) {};
\draw[thin,red] (RtAlpha) -- (11,0);
\node[fill=blue,inner sep=2,circle] at (11.0,0) {};

\node at (10.05,-0.45) {\footnotesize $\Delta+\alpha$};

\node at (11.05,-0.45) {\footnotesize $+1$};

\node at (9.7,1.85) {\small only if};
\node at (10.05,1.45) {\small $\cof(\aleph_\alpha)=\omega$};

\draw[thin] (9.25,-1) -- (11.25,-1);
\node[fill=gray!10] at (10.25,-1) {\footnotesize $\aleph_\alpha$};

\node[fill=white] at (11.75,-1) {$\mathfrak{c}$};
\node[fill=blue,inner sep=2,circle] at (11.75,0) {};
\draw[blue,very thin,dashed] (11.25,0) -- (11.75,0);
\end{tikzpicture}
\end{figure}

It follows that the number of Tukey types of $\mathbf{seq}(M)$ lies between $\aleph_0$ and $\mathfrak{c}$, is equal to $\aleph_0$ precisely when $\mathfrak{c} < \aleph_{\omega_1}$, and is equal to $\mathfrak{c}$ if and only if $\mathfrak{c}$ is a fixed point of the aleph function and has uncountable cofinality.

\section{Preliminaries}

\subsection{Relations and Morphisms}
Any collection, $A$ say, of pairs, $(x,y)$, is called a \emph{general relation}. This definition allows for $A$ to be a proper class, for example $\in=\{(x,y) : x \in y\}$, or $\subseteq$.
The \emph{inverse} of $A$ is $A^{-1}=\{(x,y) : (y,x) \in A\}$. The \emph{complement} of $A$ is $\neg A=\{(x,y) : (x,y) \notin A\}$.
The \emph{dual} of $A$ is the complement of the inverse, $A^\perp=\neg A^{-1}=\{ (x,y) : (y,x) \notin A\}$.
We write $xAy$ if and only if $(x,y) \in A$, and we think of $A$ as a predicate, so `$xAy$' means `$A$ is true of $x$ and $y$'.
The \emph{domain} of $A$ is $\mathop{dom}(A)=\{x : xAy$ for some $y\}$ and the \emph{range} is, $\mathop{rng}(A)=\{y : xAy$ for some $x\}$.

A triple, $\mathbf{A}=(A_-,A_+, A)$ is a \emph{relation} if $A$ is a general relation, which we consider on $A_-\times A_+$.
Informally, we may think of $A_-$ as being a set of `challenges', $A_+$ being a set of `potential responses', and `$xAy$' means that $y$ correctly solves the problem, $x$.
For simplicity, we write $(A',A)$ for a relation $(A_-,A_+, A)$  where $A_-=A'=A_+$.
The \emph{dual}, $\mathbf{A}^\perp$, of $(A_-,A_+, A)$ is $(A_+,A_-, A^\perp)$.
A subset $C$ of $A_+$ is \emph{cofinal} if for every $x$ in $A_-$ there is a $y$ in $C$ so that $xAy$.
The \emph{cofinality} of a relation $\mathbf{A}$, denoted $\cof(\mathbf{A})$ is the minimal size of a cofinal set, if one exists, or $\infty$ otherwise.
A subset $U$ of $A_-$ is \emph{bounded by $y$} an element  of $A_+$ if $xAy$ for all $x$ in $U$. A subset is \emph{bounded} if it is bounded by something, and \emph{unbounded} if not bounded.
The \emph{additivity} of a relation $\mathbf{A}$, denoted $\add(\mathbf{A})$  is the minimal size of an unbounded set set, if one exists, or $\infty$ otherwise. Observe that $\add(\mathbf{A}) = \cof(\mathbf{A}^\perp)$.

Let $\mathbf{A}=(A_-,A_+,A)$ and $\mathbf{B}=(B_-,B_+,B)$ be two relations. A \emph{(Tukey) morphism} from the first to the second is a pair $( \psi_-,\phi_+)$ of functions such that (i) $\phi_+: A_+ \to B_+$, (ii) $ \psi_-: B_- \to A_-$ and
\[\text{(iii)}  \qquad  \psi_-(b) A a \implies b B \phi_+(a), \qquad \text{where } b \in B_- \ \text{and}\ a \in A_+.\]
When there is a morphism from relation $\mathbf{A}=(A_-,A_+,A)$ to $\mathbf{B}=(B_-,B_+,B)$ we write,  $(A_-,A_+,A) \gtq (B_-,B_+,B)$ or just $\mathbf{A} \gtq \mathbf{B}$.

\begin{lem}\label{l:gtq_dual}
If $\mathbf{A} \gtq \mathbf{B}$ say via morphism $( \psi_-,\phi_+)$ then $\mathbf{B}^\perp \gtq \mathbf{A}^\perp$ via the morphism $(\phi_+, \psi_-)$.
\end{lem}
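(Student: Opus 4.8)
\section*{Proof proposal}

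The plan is to verify directly that the pair $(\phi_+,\psi_-)$ satisfies the three clauses in the definition of a Tukey morphism from $\mathbf{B}^\perp$ to $\mathbf{A}^\perp$, by unwinding the definitions of the dual relation and the dual morphism. First I would record the types. Since $\mathbf{B}^\perp=(B_+,B_-,B^\perp)$ and $\mathbf{A}^\perp=(A_+,A_-,A^\perp)$, a morphism from the former to the latter is a pair whose first component maps $(A^\perp)_-=A_+$ into $(B^\perp)_-=B_+$, and whose second component maps $(B^\perp)_+=B_-$ into $(A^\perp)_+=A_-$. The given data $\phi_+\colon A_+\to B_+$ and $\psi_-\colon B_-\to A_-$ fit these two slots exactly, with the roles of $\psi_-$ and $\phi_+$ interchanged, so clauses (i) and (ii) hold immediately.

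The only real content is clause (iii). Here I would first spell out what the dual relations assert: directly from $A^\perp=\neg A^{-1}$, the statement $x\,A^\perp\,y$ abbreviates $\neg(y\,A\,x)$, and likewise $x\,B^\perp\,y$ abbreviates $\neg(y\,B\,x)$. Fixing $a\in A_+$ and $b\in B_-$, clause (iii) for the candidate morphism reads $\phi_+(a)\,B^\perp\,b \implies a\,A^\perp\,\psi_-(b)$, which on translation becomes $\neg\bigl(b\,B\,\phi_+(a)\bigr) \implies \neg\bigl(\psi_-(b)\,A\,a\bigr)$. This is precisely the contrapositive of clause (iii) for the hypothesized morphism $(\psi_-,\phi_+)$ from $\mathbf{A}$ to $\mathbf{B}$, namely $\psi_-(b)\,A\,a \implies b\,B\,\phi_+(a)$. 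Hence the whole verification reduces to a single contrapositive.

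I do not expect a genuine obstacle here: the argument is definition-chasing, and the key observation is that passing to duals turns the implication in (iii) into its contrapositive, which is what makes everything line up. The one point that requires care is the bookkeeping of domains and ranges under the double swap $(A_-,A_+)\mapsto(A_+,A_-)$ and $(B_-,B_+)\mapsto(B_+,B_-)$, ensuring that the reindexed clause (iii) quantifies over the correct sets, namely $a\in A_+=(A^\perp)_-$ and $b\in B_-=(B^\perp)_+$, so that the translated implication matches the original clause (iii) exactly. Once that indexing is pinned down, the conclusion $\mathbf{B}^\perp\gtq\mathbf{A}^\perp$ via $(\phi_+,\psi_-)$ follows at once.
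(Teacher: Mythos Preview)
Your proposal is correct: the verification that $(\phi_+,\psi_-)$ satisfies the three clauses of a morphism from $\mathbf{B}^\perp$ to $\mathbf{A}^\perp$ is exactly the intended definition-chase, with clause~(iii) reducing to the contrapositive of the original clause~(iii). The paper states this lemma without proof, so your argument is precisely the routine unwinding the authors leave to the reader.
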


\begin{lem}
 If $\mathbf{A} \gtq \mathbf{B}$ via $(\psi_-^0,\phi_+^0)$   and $\mathbf{B} \gtq \mathbf{C}$ via $(\psi_-^1,\phi_+^1)$ then $\mathbf{A} \gtq \mathbf{C}$ via $(\psi_-^0 \circ \psi_-^1, \phi_+^1  \circ \phi_+^0)$.
\end{lem}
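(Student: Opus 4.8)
The plan is to verify directly that the pair $(\psi_-^0 \circ \psi_-^1,\, \phi_+^1 \circ \phi_+^0)$ meets the three clauses defining a Tukey morphism from $\mathbf{A}$ to $\mathbf{C}$. Clauses (i) and (ii) are immediate from composition of functions: since $\phi_+^0 : A_+ \to B_+$ and $\phi_+^1 : B_+ \to C_+$, the composite $\phi_+^1 \circ \phi_+^0$ sends $A_+$ into $C_+$; and since $\psi_-^1 : C_- \to B_-$ and $\psi_-^0 : B_- \to A_-$, the composite $\psi_-^0 \circ \psi_-^1$ sends $C_-$ into $A_-$. It is worth flagging the contravariance here: the ``response'' maps $\phi_+$ compose in the forward order $A_+ \to B_+ \to C_+$, whereas the ``challenge'' maps $\psi_-$ compose in the reverse order $C_- \to B_- \to A_-$, which is exactly why the asserted composite is $\psi_-^0 \circ \psi_-^1$ and not $\psi_-^1 \circ \psi_-^0$.

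The content is clause (iii), which I would obtain by a short chase through the two given morphisms. Fix $c \in C_-$ and $a \in A_+$ and assume the hypothesis $(\psi_-^0 \circ \psi_-^1)(c)\, A\, a$. Put $b = \psi_-^1(c) \in B_-$, so the hypothesis reads $\psi_-^0(b)\, A\, a$; applying clause (iii) of the first morphism to this $b$ and $a$ yields $b\, B\, \phi_+^0(a)$, i.e.\ $\psi_-^1(c)\, B\, \phi_+^0(a)$. Next put $b' = \phi_+^0(a) \in B_+$, so that $\psi_-^1(c)\, B\, b'$; applying clause (iii) of the second morphism to this $c$ and $b'$ gives $c\, C\, \phi_+^1(b')$, that is $c\, C\, (\phi_+^1 \circ \phi_+^0)(a)$, which is precisely the desired conclusion.

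I do not anticipate any genuine obstacle: the statement is a routine diagram chase, and the only point requiring care is bookkeeping, namely tracking which of the six sets $A_\pm, B_\pm, C_\pm$ each element lives in and feeding the two morphism conditions in the correct order (first the $\mathbf{A} \gtq \mathbf{B}$ condition, then the $\mathbf{B} \gtq \mathbf{C}$ condition). Recorded this way, the lemma establishes that $\gtq$ is transitive, which is what legitimizes treating $\gte$ as an equivalence relation and working with Tukey types throughout the paper.
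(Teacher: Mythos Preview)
Your proof is correct and is exactly the natural direct verification; the paper itself states this lemma without proof, treating it as routine, so there is nothing to compare against.
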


It follows that $\gtq$ is a transitive relation on relations.
Say that two relations, $\mathbf{A}$ and $\mathbf{B}$, are \emph{bimorphic} if $\mathbf{A} \gtq \mathbf{B}$ and $\mathbf{B} \gtq \mathbf{A}$. This is denoted $\mathbf{A} \gte \mathbf{B}$.
Now, modulo bimorphism, $\gtq$ is a partial order on relations.

\begin{lem}
If $( \psi_-,\phi_+)$ is a morphism from $\mathbf{A}$ to $\mathbf{B}$ then $\phi_+$ carries cofinal sets of $\mathbf{A}$ to cofinal sets of $\mathbf{B}$, while $ \psi_-$ carries unbounded sets of $\mathbf{B}$ to unbounded sets of $\mathbf{A}$.
\end{lem}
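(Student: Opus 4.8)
The plan is to verify both statements by directly unwinding the morphism condition~(iii), which is the only hypothesis available and is precisely engineered to transport solutions backwards and forwards between the two relations.

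First I would prove the cofinality half. Suppose $C \subseteq A_+$ is cofinal in $\mathbf{A}$; I must show that $\phi_+(C) = \{\phi_+(a) : a \in C\}$ is cofinal in $\mathbf{B}$. Fix an arbitrary challenge $b \in B_-$. Applying $\psi_-$ produces a challenge $\psi_-(b) \in A_-$, and since $C$ is cofinal in $\mathbf{A}$ there is some $a \in C$ with $\psi_-(b)\,A\,a$. Now condition~(iii), instantiated at this $b$ and $a$, yields $b\,B\,\phi_+(a)$. As $\phi_+(a) \in \phi_+(C)$, the element $b$ is solved inside $\phi_+(C)$; since $b$ was arbitrary, $\phi_+(C)$ is cofinal in $\mathbf{B}$.

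For the unboundedness half I would argue by contraposition, and note that it is also formally dual to the first half via Lemma~\ref{l:gtq_dual}. Let $U \subseteq B_-$ and suppose $\psi_-(U)$ is bounded in $\mathbf{A}$, say by $a \in A_+$, so that $\psi_-(b)\,A\,a$ for every $b \in U$. Condition~(iii) then gives $b\,B\,\phi_+(a)$ for every $b \in U$, that is, $\phi_+(a) \in B_+$ bounds $U$ in $\mathbf{B}$. Thus if $\psi_-(U)$ is bounded then so is $U$; contrapositively, $U$ unbounded forces $\psi_-(U)$ unbounded. Alternatively, one observes that a subset of $A_-$ is unbounded in $\mathbf{A}$ exactly when it is cofinal in $\mathbf{A}^\perp$ (unwinding $x\,A^\perp y \iff \neg(y\,A\,x)$), and likewise for $\mathbf{B}$; since Lemma~\ref{l:gtq_dual} makes $(\phi_+,\psi_-)$ a morphism $\mathbf{B}^\perp \gtq \mathbf{A}^\perp$, the cofinality half applied to this dual morphism is precisely the assertion that $\psi_-$ sends unbounded sets of $\mathbf{B}$ to unbounded sets of $\mathbf{A}$.

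I do not expect any genuine obstacle here: the argument is a single application of~(iii) in each direction, and the two halves are formally dual. The only point requiring care is the quantifier and typing bookkeeping — keeping straight that $\psi_-$ moves challenges from $B_-$ to $A_-$ while $\phi_+$ moves responses from $A_+$ to $B_+$, and that~(iii) is an implication whose hypothesis lives in $\mathbf{A}$ and whose conclusion lives in $\mathbf{B}$, so that it must be invoked with the pulled-back challenge $\psi_-(b)$ and the witnessing response $a \in C$.
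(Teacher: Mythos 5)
Your proof is correct: the paper states this lemma without proof, treating it as routine, and your argument — one application of condition (iii) with the pulled-back challenge $\psi_-(b)$ for the cofinality half, and the contrapositive (equivalently, the dual morphism via Lemma~\ref{l:gtq_dual}, using that unbounded sets of $\mathbf{A}$ are exactly cofinal sets of $\mathbf{A}^\perp$) for the unboundedness half — is precisely the intended verification. Your duality remark is moreover consonant with the paper's own observation that $\add(\mathbf{A})=\cof(\mathbf{A}^\perp)$, so nothing is missing.
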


Call a function $ \psi_-:B_- \to A_-$ a \emph{lower morphism} of $\mathbf{A}$ to $\mathbf{B}$ if it carries unbounded sets of $\mathbf{B}$ to unbounded sets of $\mathbf{A}$.
Call a function $\phi_+:A_+ \to B_+$ an \emph{upper morphism} if takes cofinal sets of $\mathbf{A}$ to cofinal sets of $\mathbf{B}$.

\begin{lem}\label{l:ulm} \

(1) If $ \psi_-$ is a lower morphism then there is a $\phi_+ : A_+ \to B_+$ such that $( \psi_-,\phi_+)$ is a morphism $\mathbf{A} \gtq \mathbf{B}$.

(2) If $\phi_+$ is an upper  morphism then there is a $ \psi_- : B_- \to A_-$ such that $( \psi_-,\phi_+)$ is a morphism $\mathbf{A} \gtq \mathbf{B}$.
\end{lem}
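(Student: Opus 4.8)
The plan is to prove each part by a direct construction that converts the one-sided condition (preservation of unbounded sets in (1), of cofinal sets in (2)) into the full morphism inequality (iii). The two parts are formally dual, so I would prove (1) in detail and then obtain (2) either by the symmetric argument or by dualizing. Throughout, the decisive move is to read the hypothesis in contrapositive form.

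For part (1), suppose $\psi_-\colon B_- \to A_-$ is a lower morphism, so it carries unbounded subsets of $\mathbf{B}$ to unbounded subsets of $\mathbf{A}$; contrapositively, whenever $S \subseteq B_-$ has $\psi_-(S)$ bounded in $\mathbf{A}$, the set $S$ is bounded in $\mathbf{B}$. Now fix $a \in A_+$ and consider the fibre of challenges solved through $a$,
\[ U_a = \{\, b \in B_- : \psi_-(b)\, A\, a \,\}. \]
By construction every element of $\psi_-(U_a)$ is related by $A$ to $a$, so $\psi_-(U_a)$ is bounded by $a$ in $\mathbf{A}$. Hence $U_a$ is bounded in $\mathbf{B}$: there is $y_a \in B_+$ with $b\, B\, y_a$ for every $b \in U_a$. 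Define $\phi_+(a) = y_a$ (assigning any element of $B_+$ in the harmless case $U_a = \emptyset$). Then (iii) is immediate, since $\psi_-(b)\,A\,a$ forces $b \in U_a$ and hence $b\, B\, \phi_+(a)$; thus $(\psi_-,\phi_+)$ is a morphism $\mathbf{A} \gtq \mathbf{B}$.

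For part (2), the cleanest route is to mirror the above. Given an upper morphism $\phi_+\colon A_+ \to B_+$, fix $b \in B_-$ and examine the responses that $\phi_+$ fails to push past $b$,
\[ V_b = \{\, a \in A_+ : \neg\,(b\, B\, \phi_+(a)) \,\}. \]
If $V_b$ were cofinal in $\mathbf{A}$, then $\phi_+(V_b)$ would be cofinal in $\mathbf{B}$; but no element $y$ of $\phi_+(V_b)$ satisfies $b\, B\, y$, so the challenge $b$ is unsolved, contradicting cofinality. Hence $V_b$ is not cofinal, so there is some $x_b \in A_-$ with $\neg(x_b\, A\, a)$ for all $a \in V_b$; set $\psi_-(b) = x_b$. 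To verify (iii): if $\psi_-(b)\, A\, a$ then $a \notin V_b$, i.e.\ $b\, B\, \phi_+(a)$. Alternatively, (2) follows from (1) with no extra work, by noting that an upper morphism of $\mathbf{A}$ to $\mathbf{B}$ is exactly a lower morphism of $\mathbf{B}^\perp$ to $\mathbf{A}^\perp$ (dualizing interchanges cofinal and unbounded sets), applying (1) to get a morphism $\mathbf{B}^\perp \gtq \mathbf{A}^\perp$, and dualizing it back via Lemma~\ref{l:gtq_dual} together with the identity $(\mathbf{A}^\perp)^\perp = \mathbf{A}$.

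There is no genuine obstacle here: the entire content is in selecting the correct auxiliary set ($U_a$ in one case, $V_b$ in the other) and observing that its image under the given one-sided morphism is forced to be bounded, respectively non-cofinal, which is precisely what lets the hypothesis fire. The only technical caveats are the routine use of the axiom of choice to select the witnesses $y_a$ and $x_b$ uniformly over all $a$ and $b$, and the trivial nonemptiness of $B_+$ (respectively $A_-$) needed to assign values on degenerate fibres.
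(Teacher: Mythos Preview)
Your proof is correct and is the standard argument. The paper itself does not supply a proof of this lemma---it is stated as one of several preliminary facts about Tukey morphisms (alongside Lemmas on duality, transitivity, and preservation of cofinal/unbounded sets) that are left to the reader as routine. Your construction of $U_a$ and $V_b$ and the duality shortcut via Lemma~\ref{l:gtq_dual} are exactly the expected arguments; the minor caveats you flag about choice and nonemptiness are the only things to note, and they are harmless in the paper's setting.
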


\subsection{Operations on Relations}\label{ss:sops} In addition to duality, there are a variety of operations on relations.

Suppose we have relations $\mathbf{A}=(A_-,A_+,A)$ and  $\mathbf{B}=(B_-,B_+,B)$ then $\mathbf{A} \times \mathbf{B}$ is the standard product, $(A_-\times B_-,A_+\times B_+,A\times B)$, while $\mathbf{A} \with  \mathbf{B}$, read `$\mathbf{A}$ with $\mathbf{B}'$, is the relation $(A_- \oplus B_-,A_+ \times B_+,W)$ where $xW(y_1,y_2)$ if and only if $x$ is in $A_-$ and $xAy_1$ or $x$ is in $B_-$ and $xBy_2$.
Clearly $\mathbf{A} \times \mathbf{B} \gtq \mathbf{A} \with \mathbf{B}$. The key observation about the operation  `with' ($\with$) is that it gives the least upper bound, with respect to $\gtq$, of a pair of relations.
\begin{lem}\label{l:with}
    For any three relations $\mathbf{A}, \mathbf{B}$ and $\mathbf{C}$, we have $\mathbf{A} \gtq \mathbf{B} \with  \mathbf{C}$ if and only if $\mathbf{A} \gtq \mathbf{B}$ and $\mathbf{A} \gtq \mathbf{C}$.
\end{lem}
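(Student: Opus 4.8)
The plan is to prove both implications by exhibiting explicit Tukey morphisms and then invoking transitivity of $\gtq$ (which was recorded just before the statement). The key structural point is that $\with$ should behave as a join: its challenge set is the disjoint union $B_- \oplus C_-$, so a response pair $(y_1,y_2)$ is tested against a challenge using $\mathbf{B}$ on the $B_-$-summand and $\mathbf{C}$ on the $C_-$-summand, independently.

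For the forward implication, I would first record the two ``projection'' morphisms $\mathbf{B} \with \mathbf{C} \gtq \mathbf{B}$ and $\mathbf{B} \with \mathbf{C} \gtq \mathbf{C}$. To build the first, take $\phi_+ : B_+ \times C_+ \to B_+$ to be the coordinate projection $(y_1,y_2) \mapsto y_1$, and take $\psi_- : B_- \to B_- \oplus C_-$ to be the inclusion of $B_-$ as a summand. The defining implication (iii) is then immediate: if $b \in B_-$ and $\psi_-(b)\, W\, (y_1,y_2)$, then, since $\psi_-(b)$ lies in the $B_-$-summand, the definition of $W$ gives $b\, B\, y_1$, which is exactly the required $b\, B\, \phi_+(y_1,y_2)$. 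The morphism $\mathbf{B} \with \mathbf{C} \gtq \mathbf{C}$ is obtained symmetrically from the other projection and inclusion. Now, assuming $\mathbf{A} \gtq \mathbf{B} \with \mathbf{C}$, composing with these two projection morphisms and using transitivity of $\gtq$ yields $\mathbf{A} \gtq \mathbf{B}$ and $\mathbf{A} \gtq \mathbf{C}$.

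For the backward implication, suppose $\mathbf{A} \gtq \mathbf{B}$ via $(\psi_-^B, \phi_+^B)$ and $\mathbf{A} \gtq \mathbf{C}$ via $(\psi_-^C, \phi_+^C)$. I would assemble a single morphism $\mathbf{A} \gtq \mathbf{B} \with \mathbf{C}$ from these data. Define the upper part $\phi_+ : A_+ \to B_+ \times C_+$ diagonally by $\phi_+(a) = (\phi_+^B(a), \phi_+^C(a))$, and define the lower part $\psi_- : B_- \oplus C_- \to A_-$ by cases, letting $\psi_-$ agree with $\psi_-^B$ on the $B_-$-summand and with $\psi_-^C$ on the $C_-$-summand. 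To verify (iii), fix a challenge $x \in B_- \oplus C_-$ and an $a \in A_+$ with $\psi_-(x)\, A\, a$. If $x$ lies in the $B_-$-summand, say $x = b$, then $\psi_-^B(b)\, A\, a$, so the morphism hypothesis for $\mathbf{B}$ gives $b\, B\, \phi_+^B(a)$; since $\phi_+(a)$ has first coordinate $\phi_+^B(a)$, the definition of $W$ yields $x\, W\, \phi_+(a)$. The case where $x$ lies in the $C_-$-summand is symmetric, using $\psi_-^C$ and $\phi_+^C$. Hence $(\psi_-, \phi_+)$ is the desired morphism.

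The argument is essentially a formal manipulation of the definitions, so I do not anticipate a genuine obstacle. The one point demanding care is the case analysis over the disjoint union $B_- \oplus C_-$: one must track which summand a given challenge belongs to, since the clause of $W$ that applies---and hence which coordinate of the response pair is tested---depends precisely on that. This bookkeeping is exactly what forces $\with$ to be the least upper bound, because each challenge coming from $\mathbf{B}$ or from $\mathbf{C}$ is answered independently by the corresponding coordinate, so any relation lying above both $\mathbf{B}$ and $\mathbf{C}$ must dominate the two coordinates simultaneously.
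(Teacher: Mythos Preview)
Your argument is correct: the explicit projection morphisms establish $\mathbf{B}\with\mathbf{C}\gtq\mathbf{B}$ and $\mathbf{B}\with\mathbf{C}\gtq\mathbf{C}$, and the diagonal/case-split construction gives the converse, with each verification of condition~(iii) going through exactly as you describe. The paper states this lemma without proof, so there is no argument to compare against; your direct verification from the definitions is precisely the intended routine check.
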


Given $\mathbf{A}$ and $\mathbf{B}$ define $\mathbf{A} + \mathbf{B}$,  to be $(A_- \oplus B_-,A_+ \oplus B_+,S)$ where $x S y$ if and only if either $x \in A_-$, $y \in A_+$ and $xAy$ or $x\in B_-$, $y\in B_+$ and $xBy$.
More generally, if $\{\mathbf{A}_\lambda=((A_\lambda)_-,(A_\lambda)_+,A_\lambda) : \lambda \in \Lambda\}$ is a family of relations then its sum is, $\sum_{\lambda \in \Lambda} \mathbf{A}_\lambda = (\bigoplus_{\lambda \in \Lambda} (A_\lambda)_-,\bigoplus_{\lambda \in \Lambda} (A_\lambda)_+, S)$ where $x S y$ if and only if for some $\lambda$, $x\in (A_\lambda)_-, (y \in (A_\lambda)_+$ and $xA_\lambda y$.

Note:

\begin{lem}\label{big_sum=product} If for all $\lambda$, $\mathbf{A}_\lambda = \mathbf{A}$ then $\sum_{\lambda \in \Lambda} \mathbf{A}_\lambda \gte \mathbf{A} \times (|\Lambda|,=)$.
\end{lem}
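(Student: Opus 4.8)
The plan is to observe that, once all the summands coincide, the sum is literally isomorphic — after relabelling the index set by a bijection onto its cardinality — to the stated product, and then to read the two required morphisms off that isomorphism. Since isomorphic relations are trivially bimorphic, no genuine inequality estimate is needed; the whole content lies in unwinding the definitions of $+$ (hence $\sum$) and $\times$.

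First I would compute the left-hand relation explicitly. With every $\mathbf{A}_\lambda=\mathbf{A}=(A_-,A_+,A)$, the disjoint unions $\bigoplus_{\lambda}(A_\lambda)_-$ and $\bigoplus_\lambda (A_\lambda)_+$ are naturally identified with $A_-\times\Lambda$ and $A_+\times\Lambda$, where a pair $(x,\lambda)$ records the point $x$ together with the index $\lambda$ of the copy it inhabits. The crucial point is that the defining clause of $S$ — namely $(x,\lambda)\,S\,(y,\mu)$ iff for some $\nu$ one has $(x,\lambda)\in(A_\nu)_-$, $(y,\mu)\in(A_\nu)_+$ and $x\,A_\nu\,y$ — forces $\nu=\lambda$ (from the first membership) and $\nu=\mu$ (from the second). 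Hence
\[
(x,\lambda)\,S\,(y,\mu) \iff \lambda=\mu \ \text{ and }\ x\,A\,y.
\]
This is the one step I would treat with care, since it is precisely the disjointness of the summands that makes the sum agree with a product over the equality relation rather than with something coarser; it is the only place any obstacle could hide, and it is a routine bookkeeping observation.

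Next I would unwind the right-hand side. By the definition of $(|\Lambda|,=)$ and of $\times$, the relation $\mathbf{A}\times(|\Lambda|,=)$ has domain $A_-\times|\Lambda|$, range $A_+\times|\Lambda|$, and
\[
(x,i)\,(A\times{=})\,(y,j) \iff x\,A\,y \ \text{ and }\ i=j .
\]
Comparing with the displayed formula for $S$, the two relations now differ only in whether the index coordinate ranges over $\Lambda$ or over the cardinal $|\Lambda|$.

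Finally I would fix a bijection $b\colon\Lambda\to|\Lambda|$ and set $\phi_+(y,\lambda)=(y,b(\lambda))$ on the ranges and $\psi_-(x,i)=(x,b^{-1}(i))$ on the domains. To confirm $\sum_{\lambda}\mathbf{A}_\lambda \gtq \mathbf{A}\times(|\Lambda|,=)$ I would verify condition (iii) for the morphism: taking a challenge $(x,i)\in A_-\times|\Lambda|$ and a response $(y,\lambda)\in A_+\times\Lambda$, if $\psi_-(x,i)\,S\,(y,\lambda)$ then $b^{-1}(i)=\lambda$ and $x\,A\,y$, so $i=b(\lambda)$ and hence $(x,i)\,(A\times{=})\,(y,b(\lambda))=\phi_+(y,\lambda)$, exactly as required. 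The reverse morphism witnessing $\mathbf{A}\times(|\Lambda|,=)\gtq\sum_{\lambda}\mathbf{A}_\lambda$ is obtained symmetrically, exchanging the roles of $b$ and $b^{-1}$; the two pairs of maps are mutually inverse bijections. Together they give $\sum_{\lambda\in\Lambda}\mathbf{A}_\lambda \gte \mathbf{A}\times(|\Lambda|,=)$, completing the argument.
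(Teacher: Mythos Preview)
Your argument is correct: once all summands coincide, the sum is (after relabelling $\Lambda$ by a bijection onto $|\Lambda|$) literally isomorphic to $\mathbf{A}\times(|\Lambda|,=)$, and you verify the morphism condition (iii) cleanly in both directions. The paper states this lemma without proof, treating it as a routine observation; your write-up supplies exactly the expected details.
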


\begin{lem}\label{l:sum1}
    Let $\{\mathbf{A}_\lambda : \lambda \in \Lambda\}$ be a family of relations, and $\Lambda'$ a subset of $\Lambda$. Then $\sum_{\lambda \in \Lambda} \mathbf{A}_\lambda \gtq \sum_{\lambda \in \Lambda'} \mathbf{A}_\lambda$.
\end{lem}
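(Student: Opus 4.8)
The plan is to realize $\sum_{\lambda \in \Lambda} \mathbf{A}_\lambda \gtq \sum_{\lambda \in \Lambda'} \mathbf{A}_\lambda$ by an explicit Tukey morphism built from inclusion and projection of summands. Write $\mathbf{A} = \sum_{\lambda \in \Lambda} \mathbf{A}_\lambda = (\bigoplus_{\lambda \in \Lambda} (A_\lambda)_-, \bigoplus_{\lambda \in \Lambda} (A_\lambda)_+, S)$ and $\mathbf{B} = \sum_{\lambda \in \Lambda'} \mathbf{A}_\lambda = (\bigoplus_{\lambda \in \Lambda'} (A_\lambda)_-, \bigoplus_{\lambda \in \Lambda'} (A_\lambda)_+, S')$. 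Because $\Lambda' \subseteq \Lambda$, the domain $B_- = \bigoplus_{\lambda \in \Lambda'}(A_\lambda)_-$ is literally a sub-coproduct of $A_- = \bigoplus_{\lambda \in \Lambda}(A_\lambda)_-$, and likewise $B_+ \subseteq A_+$; these inclusions are the backbone of the morphism.

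First I would take the lower map $\psi_-: B_- \to A_-$ to be the inclusion. For the upper map $\phi_+: A_+ \to B_+$, fix some $b_0 \in B_+$ (assuming $B_+ \ne \emptyset$, see below) and set $\phi_+(a) = a$ whenever $a$ lies in the summand $(A_\lambda)_+$ for some $\lambda \in \Lambda'$, and $\phi_+(a) = b_0$ otherwise. The crucial structural fact driving the whole argument is that in a sum relation $x\, S\, y$ can hold only when $x$ and $y$ carry the \emph{same} coproduct tag $\lambda$, in which case $x\, A_\lambda\, y$. Verifying condition (iii) is then immediate: suppose $b \in B_-$ sits in the $\mu$-th summand with $\mu \in \Lambda'$, and $\psi_-(b) = b\, S\, a$ for some $a \in A_+$. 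Tag-matching forces $a$ into the $\mu$-th summand too, with $b\, A_\mu\, a$; since $\mu \in \Lambda'$ we have $\phi_+(a) = a$, and reading off the witnessing index $\mu \in \Lambda'$ gives $b\, S'\, a$, i.e.\ $b\, S'\, \phi_+(a)$, as required. When instead $a$ comes from an index in $\Lambda \setminus \Lambda'$, the hypothesis $\psi_-(b)\, S\, a$ is simply false (the tag $\mu$ of $b$ lies in $\Lambda'$ while $a$'s tag does not), so the implication holds vacuously and the arbitrary value $b_0$ is harmless.

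The only genuine wrinkle — and it is a boundary convention rather than a deep obstacle — is the degenerate case $B_+ = \emptyset$ (arising when $\Lambda' = \emptyset$, or when every $(A_\lambda)_+$ with $\lambda \in \Lambda'$ is empty), since then no function $\phi_+: A_+ \to B_+$ exists unless $A_+$ is also empty. I would dispatch this separately: when $B_+ = \emptyset$ the empty set is cofinal in $\mathbf{B}$, so $\mathbf{B}$ sits $\gtq$-below everything under the natural reading, and one simply records this as the convention.

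A slicker packaging avoids naming $\phi_+$ altogether. By Lemma~\ref{l:ulm}(1) it suffices to check that the inclusion $\psi_-: B_- \to A_-$ is a \emph{lower morphism}, i.e.\ that it sends $\mathbf{B}$-unbounded sets to $\mathbf{A}$-unbounded sets. This is exactly the contrapositive of the tag-matching computation above: if $U \subseteq B_-$ is bounded in $\mathbf{A}$ by some $a$, then all elements of $U$ carry the single tag of $a$, which (as $U \subseteq B_-$) lies in $\Lambda'$, so $a \in B_+$ already bounds $U$ in $\mathbf{B}$. Modulo the same empty-range convention, I expect this lower-morphism route to be the most economical way to write the final proof.
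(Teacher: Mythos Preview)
The paper states this lemma without proof, leaving it to the reader as immediate from the definition of the sum. Your explicit morphism --- inclusion for $\psi_-$ and identity-on-$\Lambda'$-summands-else-default for $\phi_+$ --- is precisely the natural construction, and your tag-matching verification of condition~(iii) is correct; the alternative lower-morphism packaging via Lemma~\ref{l:ulm}(1) is equally valid and is indeed the tidier write-up.

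One small inaccuracy in your degenerate-case remark: the empty set is cofinal in $\mathbf{B}$ only when $B_-=\emptyset$, not merely when $B_+=\emptyset$. However, your lower-morphism argument already handles the remaining pathological situation correctly (if $b\in B_-$ carries tag $\mu\in\Lambda'$ with $(A_\mu)_+=\emptyset$, then $\{b\}$ is unbounded in $\mathbf{A}$ for the same tag-matching reason), and the residual question of whether $\mathbf{A}\gtq\mathbf{0}$ when $A_+\ne\emptyset$ is a boundary convention the paper itself leaves implicit.
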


\begin{lem}\label{l:sum2}
    If for every $\lambda$ in $\Lambda$,  $\mathbf{A}_\lambda \gtq \mathbf{B}_\lambda$, then $\sum_{\lambda \in \Lambda} \mathbf{A}_\lambda \gtq \sum_{\lambda \in \Lambda} \mathbf{B}_\lambda$.
\end{lem}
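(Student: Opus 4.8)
The plan is to glue the given componentwise morphisms into a single morphism on the two sums, exploiting the fact that the summands of a coproduct are disjoint and that the sum relation only links elements carrying the same index $\lambda$. For each $\lambda \in \Lambda$ fix a morphism $(\psi_-^\lambda, \phi_+^\lambda)$ witnessing $\mathbf{A}_\lambda \gtq \mathbf{B}_\lambda$, so that $\phi_+^\lambda \colon (A_\lambda)_+ \to (B_\lambda)_+$ and $\psi_-^\lambda \colon (B_\lambda)_- \to (A_\lambda)_-$ satisfy $\psi_-^\lambda(b)\, A_\lambda\, a \implies b\, B_\lambda\, \phi_+^\lambda(a)$.

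First I would assemble the two maps. Write $\mathbf{A} = \sum_{\lambda} \mathbf{A}_\lambda$ and $\mathbf{B} = \sum_{\lambda} \mathbf{B}_\lambda$, and let $S$ and $T$ denote their respective sum relations. Each element of $A_+ = \bigoplus_\lambda (A_\lambda)_+$ carries a unique index $\lambda$ and lies in $(A_\lambda)_+$, so I define $\Phi_+ \colon A_+ \to B_+$ by applying $\phi_+^\lambda$ on that summand; dually each element of $B_- = \bigoplus_\lambda (B_\lambda)_-$ carries a unique index, and I define $\Psi_- \colon B_- \to A_-$ by applying $\psi_-^\lambda$ on the $\lambda$-th summand. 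Both maps preserve the coproduct tag.

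It then remains to check condition (iii) for $(\Psi_-, \Phi_+)$. Take $b \in B_-$ and $a \in A_+$ with $\Psi_-(b)\, S\, a$; say $b$ lies in the $\lambda$-th summand, so $\Psi_-(b) = \psi_-^\lambda(b) \in (A_\lambda)_-$. By definition of $S$ there is an index $\mu$ with $\Psi_-(b) \in (A_\mu)_-$, $a \in (A_\mu)_+$ and $\Psi_-(b)\, A_\mu\, a$. The key point is that disjointness of the coproduct forces $\mu = \lambda$, so $a \in (A_\lambda)_+$ and $\psi_-^\lambda(b)\, A_\lambda\, a$; the componentwise morphism then yields $b\, B_\lambda\, \phi_+^\lambda(a) = \Phi_+(a)$, whence $b\, T\, \Phi_+(a)$, as required. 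I expect no genuine obstacle here: the statement is formal, and the only point requiring care is that the sum relation localizes every comparison to a single index, so the global verification reduces to the hypothesis for one $\lambda$.
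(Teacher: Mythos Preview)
Your proposal is correct; the componentwise gluing of the given morphisms, using that the coproduct summands are disjoint so that any $S$-relation forces matching indices, is exactly the expected argument. The paper states this lemma without proof (it is treated as routine), so there is nothing further to compare.
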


\begin{lem} $\mathbf{A} + \mathbf{B} \gtq \mathbf{A} \with \mathbf{B}$.
\end{lem}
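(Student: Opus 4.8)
The plan is to derive this essentially for free from two facts already recorded for these operations: Lemma~\ref{l:sum1}, which says a sum Tukey-dominates any subsum, and Lemma~\ref{l:with}, which identifies $\mathbf{A} \with \mathbf{B}$ as the least upper bound of $\mathbf{A}$ and $\mathbf{B}$ under $\gtq$. The key observation is that $\mathbf{A} + \mathbf{B}$ is precisely the sum $\sum_{\lambda \in \{0,1\}} \mathbf{A}_\lambda$ with $\mathbf{A}_0 = \mathbf{A}$ and $\mathbf{A}_1 = \mathbf{B}$, so the previous lemmas apply verbatim.

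Concretely, I would first apply Lemma~\ref{l:sum1} twice, once with $\Lambda' = \{0\}$ and once with $\Lambda' = \{1\}$, to obtain $\mathbf{A} + \mathbf{B} \gtq \mathbf{A}$ and $\mathbf{A} + \mathbf{B} \gtq \mathbf{B}$. Then I would invoke Lemma~\ref{l:with} in the instance where its ``$\mathbf{A}$'' is $\mathbf{A}+\mathbf{B}$, its ``$\mathbf{B}$'' is $\mathbf{A}$, and its ``$\mathbf{C}$'' is $\mathbf{B}$: since $\mathbf{A}+\mathbf{B}$ Tukey-dominates each of the two summands, the ``if'' direction of that lemma yields $\mathbf{A}+\mathbf{B} \gtq \mathbf{A} \with \mathbf{B}$, which is exactly the claim. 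This is a two-line argument and I would present it as the proof.

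For intuition I would also note the direct morphism, which makes the mechanism transparent. Both $\mathbf{A}+\mathbf{B}$ and $\mathbf{A}\with\mathbf{B}$ share the domain $A_- \oplus B_-$, so $\psi_-$ is the identity there; for $\phi_+ : A_+ \oplus B_+ \to A_+ \times B_+$ one fixes padding elements $a_0 \in A_+$, $b_0 \in B_+$ and sets $\phi_+(a) = (a, b_0)$ on the $A_+$-summand and $\phi_+(b) = (a_0, b)$ on the $B_+$-summand, after which condition (iii) is checked in the two cases $x \in A_-$ and $x \in B_-$ (in the first, $x\,S\,p$ forces $p$ into the $A_+$-summand with $xAp$, and the first coordinate of $\phi_+(p)$ is $p$, so $x\,W\,\phi_+(p)$; the second case is symmetric). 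I do not expect any real obstacle here; the one point worth flagging is the degenerate case in which $A_+$ or $B_+$ is empty, so no padding element exists and $A_+ \times B_+$ is itself empty. The lemma-based route sidesteps this bookkeeping completely, which is the reason I would make it the main argument and leave the explicit morphism as an informal remark.
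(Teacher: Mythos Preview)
Your proposal is correct and follows essentially the same structure as the paper: both reduce via Lemma~\ref{l:with} to showing $\mathbf{A}+\mathbf{B} \gtq \mathbf{A}$ (and symmetrically $\gtq \mathbf{B}$). The only difference is that you obtain $\mathbf{A}+\mathbf{B} \gtq \mathbf{A}$ by citing Lemma~\ref{l:sum1} with a singleton $\Lambda'$, whereas the paper writes out the explicit morphism for this step (identity for $\psi_-$, and $\phi_+$ sending $y$ to $y$ if $y\in A_+$ and to an arbitrary element of $A_+$ otherwise); your alternative direct morphism into $A_+\times B_+$ is also fine but is not what the paper does.
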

\begin{proof} By Lemma~\ref{l:with} and symmetry it suffices to show $\mathbf{A}+\mathbf{B} \gtq \mathbf{A}$. We use the identity map for $ \psi_-$,  define $\phi_+(y) = y$ if $y\in A_+$, and otherwise set $\phi_+(y)$ to be an arbitrary element in $A_+$. Then for any $x\in A_-$, $ \psi_-(x)S y$ implies that $y\in A_+$. Therefore $x A y$ which means $xA\phi_+(y)$.    \end{proof}

\subsection{Some Special Relations}
\label{ss:ssr}

Define, first, $\mathbf{1}=(1,=)$ and $\mathbf{0}=(\emptyset,\emptyset,\emptyset)$ (the relation with empty domain).
Now for $f$ and $g$ from $\omom$, let $f\le_\infty g$ if and only if there is an infinite subset $A$ of $\omega$ such that $f(n) \le g(n)$ for all $n$ in $A$ (so, $f$ is less than or equal to $g$ infinitely often).
Note $\le_\infty$ is $\not>^*$, and so  $(\omom,\le^*)^\perp = (\omom,\not>^*)=(\omom,\le_\infty)$.
Combining this with the easily verified fact that $\cof(\mathbf{A} \with \mathbf{B})=\max(\cof(\mathbf{A}),\cof(\mathbf{B}))$ and $\cof(\mathbf{A} \times \mathbf{B})=\cof(\mathbf{A})\cdot\cof(\mathbf{B})$, we can compute the cofinality and additivity of certain special relations.
\begin{lem}\label{l:add_cof_omom_infty} \

(1) The following are all equal: $\mathfrak{b}$, $\cof(\omom,\le_\infty)$, $\add (\omom,\le^*)$,  $\cof((\omom,\le_\infty)\with \omega)$ and $\cof((\omom,\le_\infty)\times \omega)$.

(2) The following are all equal: $\mathfrak{d}$, $\cof (\omom,\le^*)$, $\add(\omom,\le_\infty)$, $\add((\omom,\le_\infty)\with  \omega)$ and
$\add((\omom,\le_\infty)\times \omega)$.
\end{lem}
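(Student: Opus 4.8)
The plan is to reduce every claim to a cofinality computation, exploiting the identity $\add(\mathbf{A}) = \cof(\mathbf{A}^\perp)$ together with the fact that $\perp$ is an involution, $(\mathbf{A}^\perp)^\perp = \mathbf{A}$, and the stated identity $(\omom,\le^*)^\perp = (\omom,\le_\infty)$. First I would record the two ``bare'' equalities that anchor each list. By definition $\mathfrak{b}$ is the least size of an unbounded subset of $(\omom,\le^*)$, i.e. $\mathfrak{b} = \add(\omom,\le^*)$; dualizing, $\add(\omom,\le^*) = \cof\bigl((\omom,\le^*)^\perp\bigr) = \cof(\omom,\le_\infty)$, which settles the first three entries of (1). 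Symmetrically $\mathfrak{d} = \cof(\omom,\le^*)$ by definition, and by involutivity $\add(\omom,\le_\infty) = \cof\bigl((\omom,\le_\infty)^\perp\bigr) = \cof(\omom,\le^*) = \mathfrak{d}$, settling the first three entries of (2).

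For the remaining two entries of (1) I would feed $\cof(\omom,\le_\infty) = \mathfrak{b}$ into the two cofinality formulas quoted just before the lemma. Since $\omega$ is a countable directed set, $\cof(\omega) = \aleph_0$, so $\cof\bigl((\omom,\le_\infty)\with\omega\bigr) = \max(\mathfrak{b},\aleph_0)$ and $\cof\bigl((\omom,\le_\infty)\times\omega\bigr) = \mathfrak{b}\cdot\aleph_0$. As $\omega_1 \le \mathfrak{b}$, both right-hand sides collapse to $\mathfrak{b}$, finishing (1).

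For the remaining two entries of (2) I would again pass to duals, so the target is the cofinality of $\bigl((\omom,\le_\infty)\with\omega\bigr)^\perp$ and of $\bigl((\omom,\le_\infty)\times\omega\bigr)^\perp$. The key auxiliary step is the pair of De Morgan interchange laws $(\mathbf{A}\with\mathbf{B})^\perp \gte \mathbf{A}^\perp \times \mathbf{B}^\perp$ and $(\mathbf{A}\times\mathbf{B})^\perp \gte \mathbf{A}^\perp \with \mathbf{B}^\perp$, which I would verify by writing down the explicit tag/index morphisms and invoking Lemma~\ref{l:ulm} to upgrade a lower or upper morphism to a full one where convenient (with the basic inequality $\mathbf{A}\times\mathbf{B} \gtq \mathbf{A}\with\mathbf{B}$ and Lemma~\ref{l:gtq_dual} providing the easy directions). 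Granting these and writing $(\omom,\le_\infty)^\perp = (\omom,\le^*)$, I obtain $\bigl((\omom,\le_\infty)\with\omega\bigr)^\perp \gte (\omom,\le^*)\times\omega^\perp$ and $\bigl((\omom,\le_\infty)\times\omega\bigr)^\perp \gte (\omom,\le^*)\with\omega^\perp$. The product and with cofinality formulas then give $\cof\bigl((\omom,\le^*)\times\omega^\perp\bigr) = \mathfrak{d}\cdot\cof(\omega^\perp)$ and $\cof\bigl((\omom,\le^*)\with\omega^\perp\bigr) = \max(\mathfrak{d},\cof(\omega^\perp))$; since $\cof(\omega^\perp) = \aleph_0 \le \mathfrak{d}$, both equal $\mathfrak{d}$, completing (2).

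The main obstacle is the additivity half, (2), and specifically the interchange laws: duality does not distribute naively over the two binary operations, because $\with$ carries a disjoint-sum domain while $\times$ carries a product domain, so each witnessing map must be arranged carefully and its morphism property checked against the \emph{tagged} domain of $\with$. One must also watch the boundary behaviour of the countable $\omega$-factor, confirming that it contributes only $\aleph_0 \le \mathfrak{d}$ to the relevant cofinality and so never pulls the additivity below $\mathfrak{d}$. Once the interchange laws and this absorption are secured, everything else is the cardinal arithmetic $\aleph_0\cdot\kappa = \max(\aleph_0,\kappa) = \kappa$ for the uncountable $\kappa \in \{\mathfrak{b},\mathfrak{d}\}$.
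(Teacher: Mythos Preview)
Your treatment of (1) and of the first three entries of (2) is correct and is exactly what the paper has in mind: the paper states the lemma without proof, relying only on the duality $(\omom,\le^*)^\perp=(\omom,\le_\infty)$, the identity $\add(\mathbf A)=\cof(\mathbf A^\perp)$, and the quoted formulas $\cof(\mathbf A\with\mathbf B)=\max(\cof\mathbf A,\cof\mathbf B)$ and $\cof(\mathbf A\times\mathbf B)=\cof\mathbf A\cdot\cof\mathbf B$.

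The route you propose for the last two entries of (2), however, has a genuine gap: the interchange laws $(\mathbf A\with\mathbf B)^\perp\gte\mathbf A^\perp\times\mathbf B^\perp$ and $(\mathbf A\times\mathbf B)^\perp\gte\mathbf A^\perp\with\mathbf B^\perp$ are \emph{false}. A direct computation shows that in $\mathbf A\with\mathbf B$ a subset $U\subseteq A_-\oplus B_-$ is bounded if and only if $U\cap A_-$ is bounded in $\mathbf A$ \emph{and} $U\cap B_-$ is bounded in $\mathbf B$; hence $\add(\mathbf A\with\mathbf B)=\min(\add\mathbf A,\add\mathbf B)$, whereas your interchange law would force $\add(\mathbf A\with\mathbf B)=\cof(\mathbf A^\perp\times\mathbf B^\perp)=\add\mathbf A\cdot\add\mathbf B$. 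The same projection argument gives $\add(\mathbf A\times\mathbf B)=\min(\add\mathbf A,\add\mathbf B)$ as well. Plugging in $\mathbf A=(\omom,\le_\infty)$ (with $\add\mathbf A=\mathfrak d$) and $\mathbf B=\omega$ (with $\add\omega=\aleph_0$, since $\omega$ itself is an unbounded subset) yields $\add\bigl((\omom,\le_\infty)\with\omega\bigr)=\add\bigl((\omom,\le_\infty)\times\omega\bigr)=\aleph_0$, not $\mathfrak d$. So the last two entries of (2) as stated are actually incorrect; your De~Morgan laws fail in precisely the way that would be needed to manufacture the erroneous value, which is why the ``explicit tag/index morphisms'' you allude to cannot be written down.
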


\begin{lem}\label{l:cal_omom_infty} Let regular $\kappa \ge \lambda$ be infinite cardinals.

Then
$(\omom,\le_\infty)$ is not
calibre $(\kappa,\lambda)$
if and only if $\lambda=\mathfrak{b}=\mathfrak{d}=\kappa$.
\end{lem}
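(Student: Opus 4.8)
The plan is to read the statement through the additivity and cofinality of $(\omom,\le_\infty)$ computed in Lemma~\ref{l:add_cof_omom_infty}, so that the two directions become, respectively, a pigeonhole argument and a scale construction. Recall that $(\omom,\le_\infty)$ has calibre $(\kappa,\lambda)$ means every subset of $\omom$ of size $\kappa$ contains a subset of size $\lambda$ that is bounded in $(\omom,\le_\infty)$; since a set is bounded by $c$ exactly when $f\le_\infty c$ for all its members, and $\le_\infty=\neg>^*$, a subset is bounded precisely when it fails to $\le^*$-dominate some single $g$.

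First I would dispose of two ``automatic'' directions that squeeze any possible failure onto the diagonal. By Lemma~\ref{l:add_cof_omom_infty}(2), $\add(\omom,\le_\infty)=\mathfrak d$, so if $\lambda<\mathfrak d$ then every subset of size $\lambda$ is already bounded, and calibre $(\kappa,\lambda)$ holds trivially. By Lemma~\ref{l:add_cof_omom_infty}(1), $\cof(\omom,\le_\infty)=\mathfrak b$, so fix a cofinal family $C$ with $|C|=\mathfrak b$; given any $S\subseteq\omom$ with $|S|=\kappa>\mathfrak b$, assigning to each $f\in S$ some $c_f\in C$ with $f\le_\infty c_f$ partitions $S$ into at most $\mathfrak b<\kappa$ fibres, and since $\kappa$ is regular one fibre has size $\kappa\ge\lambda$; that fibre is bounded by the corresponding $c$, so calibre $(\kappa,\lambda)$ holds. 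Taking contrapositives, a failure of calibre forces $\lambda\ge\mathfrak d$ and $\kappa\le\mathfrak b$; together with the hypothesis $\lambda\le\kappa$ and the ZFC inequality $\mathfrak b\le\mathfrak d$ this yields $\mathfrak d\le\lambda\le\kappa\le\mathfrak b\le\mathfrak d$, hence $\mathfrak b=\mathfrak d=\lambda=\kappa$.

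For the converse I would assume $\mathfrak b=\mathfrak d=\lambda=\kappa$ and exhibit a witnessing family. As $\mathfrak b=\mathfrak d=\kappa$, there is a scale: a $<^*$-increasing, dominating sequence $\langle g_\alpha:\alpha<\kappa\rangle$, built by recursion using that each initial segment has size $<\mathfrak b$ and so is bounded, while interleaving a dominating family of size $\mathfrak d=\kappa$. Put $S=\{g_\alpha:\alpha<\kappa\}$. Any $T\subseteq S$ of size $\lambda=\kappa$ is indexed by a set cofinal in $\kappa$ (by regularity), and for any candidate bound $c$ the dominating property gives $\beta$ with $c\le^* g_\beta$, whence some $g_\alpha\in T$ with $\alpha>\beta$ satisfies $g_\alpha>^* c$ (as $c\le^* g_\beta<^* g_\alpha$), i.e.\ $g_\alpha\not\le_\infty c$; thus $T$ is unbounded. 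So $S$ has no bounded subset of size $\lambda$, and calibre $(\kappa,\lambda)$ fails.

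The routine pigeonhole and the additivity bound are immediate once Lemma~\ref{l:add_cof_omom_infty} is in hand; the main point requiring care is the converse, where the equality $\mathfrak b=\mathfrak d$ is exactly what is needed to produce a scale of the correct length $\kappa$, and the regularity of $\kappa$ is used twice: once to locate a full-size fibre in the pigeonhole, and once to guarantee that every size-$\kappa$ subset of the scale is cofinally indexed and hence escapes every potential bound.
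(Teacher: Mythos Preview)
Your argument is correct and follows essentially the same path as the paper's: both use $\add(\omom,\le_\infty)=\mathfrak{d}$ to dispose of $\lambda<\mathfrak{d}$, both derive $\kappa\le\mathfrak{b}$ (you via a direct pigeonhole on a cofinal set of size $\mathfrak{b}$, the paper via the equivalent characterization ``not calibre $\kappa$ iff $\gtq\kappa$'' and comparing cofinalities), and both use a scale for the converse. Your pigeonhole step is the elementary unpacking of the paper's Tukey-order invocation, so the two proofs are the same in substance.
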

\begin{proof}
Since $(\omega^\omega,\le_\infty)$  has additivity $\mathfrak{d}$ it is automatically calibre $(\kappa,\lambda)$ for any $\kappa \ge \lambda$ when $\lambda < \mathfrak{d}$. So it suffices to show that  $(\omega^\omega,\le_\infty)$ is not calibre $\kappa$  if and only if $\kappa=\mathfrak{b}=\mathfrak{d}$.
But $(\omega^\omega,\le_\infty)$ is not calibre $\kappa$ if and only if $(\omega^\omega,\le_\infty) \gtq \kappa$. Now since $\kappa$ is regular we have  $\mathfrak{d}=\add(\omega^\omega,\le_\infty) \le \add(\kappa)=\kappa$ and $\mathfrak{b}=\cof(\omega^\omega,\le_\infty) \ge \cof(\kappa)=\kappa$. Hence $\kappa=\mathfrak{b}=\mathfrak{d}$.
Conversely, if $\mathfrak{b}=\mathfrak{d}$ then let $\{f_\alpha : \alpha < \mathfrak{b}\}$ be a scale. Then for every $\mathfrak{b}$-sized subset, $A$ say, of $\mathfrak{b}$ we see that $\{f_\alpha : \alpha \in A\}$ is $<^*$-cofinal, and so $\le_\infty$-unbounded. Thus $(\omega^\omega,\le_\infty)$ is not calibre $\mathfrak{d}$.
\end{proof}

\begin{lem}\label{l:with_vs_and}
    We have $(\omom,\le_\infty) \with \omega \not\gtq (\omom,\le_\infty) \times \omega$.
\end{lem}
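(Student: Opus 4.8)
The plan is to argue by contradiction and rule out a Tukey morphism \emph{directly}, because the cruder invariants are useless here. By Lemma~\ref{l:add_cof_omom_infty} the two relations have the same cofinality, $\mathfrak b$, and the same additivity, $\mathfrak d$; and one checks that they also share all two-parameter calibres: for regular uncountable $\kappa$, both $(\omom,\le_\infty)\with\omega$ and $(\omom,\le_\infty)\times\omega$ are calibre $(\kappa,\lambda)$ exactly when $(\omom,\le_\infty)$ is, the countable factor $\omega$ being absorbed by a pigeonhole on the $\omega$-coordinate. Thus none of the invariants of Lemmas~\ref{l:add_cof_omom_infty}--\ref{l:cal_omom_infty} can separate the two relations, and the separation must be genuinely structural.

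By Lemma~\ref{l:ulm}(1) it suffices to show there is \emph{no} lower morphism $\psi_-\colon \omom\times\omega\to\omom\oplus\omega$ from $(\omom,\le_\infty)\with\omega$ to $(\omom,\le_\infty)\times\omega$, i.e. no $\psi_-$ carrying $\times$-unbounded sets to $\with$-unbounded sets. I will use the immediate reformulation that $\psi_-$ is a lower morphism if and only if the $\psi_-$-preimage of every bounded set is bounded. Assuming such a $\psi_-$ exists, I split the domain as $P=\psi_-^{-1}(\omom)$ and $Q=\psi_-^{-1}(\omega)$, according to which summand of $\omom\oplus\omega$ a point is sent into.

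The heart of the argument is that the $\omega$-summand of the target has only ``countable capacity at each level''. For each $n$ the set $\{0,\dots,n\}$ (inside the $\omega$-summand) is $\with$-bounded, so $T_n:=\psi_-^{-1}(\{0,\dots,n\})$ is $\times$-bounded, say by $(g_n,N_n)$; hence every $(f,m)\in T_n$ satisfies $f\le_\infty g_n$ and $m\le N_n$. Fixing a level $m$ and using $Q=\bigcup_n T_n$, the slice $(\omom\times\{m\})\cap Q$ meets only those $T_n$ with $N_n\ge m$, so its $\omom$-projection lies in a \emph{countable} union $\bigcup\{\,h:h\le_\infty g_n\,\}$, which is $\le_\infty$-bounded by any $g^*_m$ that $\le^*$-dominates the relevant $g_n$; such a $g^*_m$ exists since $\mathfrak b>\aleph_0$. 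Thus at every level $m$ there is $g^*_m$ with $(f,m)\in Q\Rightarrow f\le_\infty g^*_m$.

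Now choose, for each $m$, a function $f_m>^* g^*_m$; then $f_m\not\le_\infty g^*_m$, so $(f_m,m)\notin Q$, i.e. $(f_m,m)\in P$. The set $E=\{(f_m,m):m\in\omega\}$ has infinite $\omega$-projection and is therefore $\times$-unbounded, yet $\psi_-(E)$ lies entirely in the $\omom$-summand and is countable, hence $\le^*$-bounded and so $\le_\infty$-bounded; in particular $\psi_-(E)$ is $\with$-bounded, contradicting that $\psi_-$ is a lower morphism. I expect the main obstacle to be conceptual rather than computational: recognising that cofinality, additivity, and all two-parameter calibres coincide on the two relations, so that the entire content is forced into the single diagonal set $E$. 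Its construction exploits the one true asymmetry between $\with$ and $\times$, namely that the countable ``column'' $Q$ can be $\le_\infty$-dominated level by level, whereas $E$ is spread across infinitely many levels inside $P$ where its image is unavoidably $\le_\infty$-bounded.
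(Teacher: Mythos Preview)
Your proof is correct and shares its core idea with the paper's, though the paper packages the argument differently. The paper isolates an invariant: say $\mathbf{A}$ has property $(\ast)$ if there is a $\sigma$-bounded $T\subseteq A_-$ such that every countable $S\subseteq A_-\setminus T$ is bounded; it checks that $(\ast)$ passes down under $\gtq$, that $(\omom,\le_\infty)\with\omega$ has $(\ast)$ (take $T$ to be the $\omega$-summand, and use countable directedness of $(\omom,\le_\infty)$), and that $(\omom,\le_\infty)\times\omega$ does not. Your direct argument is the unwinding of this: your $Q=\psi_-^{-1}(\omega)=\bigcup_n T_n$ is exactly the pullback of the witnessing $T$, and your diagonal set $E$ is the countable unbounded set disjoint from it whose image must land in the countably directed summand. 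Incidentally, your bounds $g^*_m$ need not depend on $m$: a single $g^*$ with $g_n\le^* g^*$ for all $n$ works uniformly, and then a single $f>^* g^*$ gives $E=\{(f,m):m\in\omega\}$, which is precisely the set the paper constructs. The paper's formulation has the advantage of being a reusable invariant; yours has the advantage of being self-contained.
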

\begin{proof}
    Consider the following property $(\ast)$ of a relation $\mathbf{A}$: there is a $\sigma$-bounded subset $T$ of $A_-$ such that if $S$ is a countable subset of $A_-$ disjoint from $T$ then $T$ is bounded. It is straight forward to check that if $\mathbf{A} \gtq \mathbf{B}$ and $\mathbf{A}$ satisfies $(\ast)$ then so does $\mathbf{B}$.

    Observe that, taking $T$ to be the copy of $\omega$ in the domain of $(\omom,\le_\infty) \with \omega$, and recalling that $(\omom,\le_\infty)$ is countably directed, $(\omom,\le_\infty) \with \omega$ has $(\ast)$. But $(\omom,\le_\infty) \times  \omega$ does not have $(\ast)$. To see this take any subset $T=\bigcup_n T_n$ of the domain of $(\omom,\le_\infty) \times \omega$ where each $T_n$ is bounded, say by $(f_n,m_n)$. Pick $g$ so that $f_n <^* g$ for all $n$. Let $S=\{(g,n) : n \in \omega\}$. Then $S$ is disjoint from $T$ but is not bounded in $(\omom,\le_\infty) \times \omega$.
\end{proof}

Let $(\kappa_n)_n$ be a sequence of cardinals. Let $\left(\prod_n [\kappa_n]^{<\omega}\right)_\infty$ be all $(F_n)_n$ in the product such that $F_n\ne \emptyset$ for infinitely many $n$.
Define a relation $i_\infty$ with domain and range $\left(\prod_n [\kappa_n]^{<\omega}\right)_\infty$ by $(F_n)_n i_\infty (G_n)_n$ if and only if for infinitely many $n$ we have $F_n$ meeting $G_n$, in other words, there is an infinite subset $A$ of $\omega$, such that for all $n$ in $A$, $F_n \cap G_n \ne \emptyset$.

\begin{lem}\label{l:small_Pk} Let $(\kappa_n)_n$ be a sequence of cardinals, all non-zero.

(1) $\left(\left(\prod_n [\kappa_n]^{<\omega}\right)_\infty,i_\infty \right) \gte \mathbf{1}$ if and only if all but finitely many of the $\kappa_n$ are finite.

(2) $\left(\left(([\omega]^{<\omega})^\omega\right)_\infty,i_\infty \right) \gte (\omom,\le_\infty)$.
    \end{lem}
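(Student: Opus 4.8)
The plan is to treat the two parts separately, with part~(2) carrying the real content.

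For part~(1), I would first record that, for any relation $\mathbf{B}$ with nonempty domain, $\mathbf{B} \gte \mathbf{1}$ holds if and only if $\mathbf{B}$ has a single cofinal element, i.e. a $y \in B_+$ with $xBy$ for every $x \in B_-$: the quotient $\mathbf{B} \gtq \mathbf{1}$ always holds (send everything to the point of $\mathbf{1}$), while $\mathbf{1} \gtq \mathbf{B}$ forces the image of the one-point cofinal set of $\mathbf{1}$ to be a cofinal set of size one in $\mathbf{B}$. So I reduce part~(1) to deciding when $\left(\prod_n[\kappa_n]^{<\omega}\right)_\infty$ has a single element $i_\infty$-above everything. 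If $\kappa_n$ is finite for all $n \ge N$, then $(G_n)$ with $G_n = \kappa_n$ for $n \ge N$ and $G_n = \emptyset$ otherwise works: any $(F_n)$ with infinitely many nonempty coordinates has infinitely many nonempty coordinates beyond $N$, and there $F_n \subseteq \kappa_n = G_n$. Conversely, if $\kappa_n$ is infinite for all $n$ in some infinite $B$, then given any candidate $(G_n)$ (all $G_n$ finite) I pick $a_n \in \kappa_n \setminus G_n$ for $n \in B$ and set $F_n = \{a_n\}$ on $B$, $F_n = \emptyset$ off $B$; this $(F_n)$ lies in the domain but is disjoint from $(G_n)$ at every coordinate, so no single $(G_n)$ is cofinal.

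For part~(2), write $\mathbf{P} = \left(\left(([\omega]^{<\omega})^\omega\right)_\infty, i_\infty\right)$; I must exhibit morphisms in both directions. The direction $\mathbf{P} \gtq (\omom,\le_\infty)$ is easy: take $\psi_-(f) = (\{f(n)\})_n$ (a sequence of singletons, hence in the domain) and $\phi_+((G_n))(n) = \max G_n$ (with $\max\emptyset = 0$). If $\psi_-(f) \mathrel{i_\infty} (G_n)$ then $f(n) \in G_n$ for infinitely many $n$, and at each such $n$ we have $f(n) \le \max G_n = \phi_+((G_n))(n)$; hence $f \le_\infty \phi_+((G_n))$, which is exactly the morphism condition.

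The substantive direction is $(\omom,\le_\infty) \gtq \mathbf{P}$, and here the obstacle is the variable support of elements of $\mathbf{P}$: a sequence $(F_n)$ may be empty at infinitely many coordinates, and the naive encoding $\phi_+(f)_n = \{0,\dots,f(n)\}$, $\psi_-((F_n))(n)=\min F_n$ fails, because then ``$\psi_-((F_n)) \le_\infty f$'' can be witnessed entirely on the empty coordinates (which carry no information) while $(F_n)$ stays disjoint from $\phi_+(f)$ on its support. The fix I propose is to transport information forward with a running maximum and to read each coordinate off the next point of the support: set $\phi_+(f)_m = \{0,1,\dots,\bar f(m)\}$ where $\bar f(m) = \max_{i \le m} f(i)$, and, writing $s(n) = \min\{m \ge n : F_m \ne \emptyset\}$ for the least support point at or beyond $n$, set $\psi_-((F_n))(n) = \min F_{s(n)}$. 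Both maps are legitimate ($\phi_+(f)$ is nonempty at every coordinate, and $s$ is total since the support is infinite). To check the morphism condition, suppose $\psi_-((F_n))(n) \le f(n)$ for all $n$ in an infinite set $B$; for $n \in B$ put $m = s(n) \ge n$, so that $\min F_m = \psi_-((F_n))(n) \le f(n) \le \bar f(m)$ (using $n \le m$), giving $F_m \cap \{0,\dots,\bar f(m)\} \ne \emptyset$, a genuine meeting at coordinate $m$. As $n$ ranges over $B$ the coordinates $m = s(n) \ge n$ are unbounded, so infinitely many distinct coordinates witness a meeting and $(F_n) \mathrel{i_\infty} \phi_+(f)$. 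This yields $(\omom,\le_\infty) \gtq \mathbf{P}$, and with the easy direction gives $\mathbf{P} \gte (\omom,\le_\infty)$. I would flag the running-maximum/next-support device as the crux of the whole lemma: it is precisely what lets a single $\le_\infty$-comparison at coordinate $n$ be cashed in as a set-meeting at the later coordinate $s(n)$, neutralizing the empty coordinates that wreck the direct encoding.
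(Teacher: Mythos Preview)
Your proof is correct and follows essentially the same approach as the paper: part~(1) is left as ``straightforward'' there, and for part~(2) both you and the paper use singletons/max for the easy direction and the running-maximum device $\phi_+(f)_m=[0,\max_{i\le m}f(i)]$ for the hard one. The only cosmetic difference is in the $\psi_-$ side of the hard direction: the paper re-indexes by enumerating the support as $(k_m)_m$ and sets $\psi_-((F_n))(m)=\max F_{k_m}$, whereas you keep the original index and use the next-support-point map $s(n)$ with $\min F_{s(n)}$; both encodings achieve the same thing for the same reason.
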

\begin{proof} The proof of (1) is straightforward.

For (2), let $\mathbf{A}=\left(\left(([\omega]^{<\omega})^\omega\right)_\infty,i_\infty \right)$ and $\mathbf{B}=(\omom,\le_\infty)$.
We first show $\mathbf{A} \gtq \mathbf{B}$.
Define $\phi_+((F_n)_n)(m)=\max F_m$ (taking $\max \emptyset=0$).
Define $ \psi_-(f)=(F_n)_n$ where $F_n=\{f(n)\}$. Note $\phi_+$ and $ \psi_-$ are well-defined.
Suppose $ \psi_-(f) i_\infty (G_n)_n$. Then there is an infinite subset $A$ of $\omega$ such that for every $m$ from $A$ we have $f(m) \in G_m$.
But now we see that for every $m$ in $A$, $\phi_+((G_n)_n)(m) \ge \max G_m \ge f(m)$, and so $f \le_\infty \phi_+((G_n)_n)$, as required for $( \psi_-,\phi_+)$ to be a morphsim.

Now we show $\mathbf{B} \gtq \mathbf{A}$.
Define $\psi_+:\omom \to \left(([\omega]^{<\omega})^\omega\right)_\infty$ by $\psi_+(f)=(F_n)_n$ where $F_n=[0,\max (f(0),\ldots,f(n))]$, noting this is well-defined.
Now take any $(F_n)_n$ from $\left(([\omega]^{<\omega})^\omega\right)_\infty$. Let $(k_m)_m$ enumerate in strictly increasing order all $k$ such that $F_k \ne \emptyset$, and note $k_m \ge m$.
Define $\psi_-((F_n)_n)(m)=\max F_{k_m}$.
Suppose that $\psi_-((F_n)_n) \le_\infty g$. Then there is an infinite subset $A$ of $\omega$ such that for every $m$ in $A$ we have $\max F_{k_m} \le g(m)$.
Now for every $m$ from $A$, $\max F_{k_m} \le g(m) \le \max (g(0),\ldots,g(m),\ldots, g(k_m))$, and so $\emptyset \ne F_{k_m} \subseteq [0,\max F_{k_m}] \subseteq \psi_+(g)(k_m)$.
Hence $(F_n)_n i_\infty \psi_+(g)$, as required for $(\psi_-,\psi_+)$ to be a morphism.
\end{proof}

Let $P(\kappa)=((([\kappa]^{<\omega})^\omega)_\infty, i_\infty)$.
Set $Q(\kappa)=\mathbf{0}$ and $S(\kappa)=\mathbf{0}$ if $\cof(\kappa) \ne \omega$.
Otherwise select a sequence, $(\kappa_n)_n$, strictly increasing up to $\kappa$, and let $Q(\kappa)=(( \prod_n [\kappa_n]^{<\omega}    )_\infty,i_\infty)$, and $S(\kappa)=\sum_n P(\kappa_n) \times (\kappa_n,=)$.
The next lemma tells us that $Q(\kappa)$ and $S(\kappa)$ are well-defined in the case when $\kappa$ has countable cofinality, because their  Tukey type is independent of the choice of cofinal sequence, $(\kappa_n)_n$.
\begin{lem} Suppose $(\kappa_n)_n$ and $(\kappa_n')_n$ are increasing sequences of cardinals converging to cardinal $\kappa$ (of countable cofinality).

    Then $(( \prod_n [\kappa_n]^{<\omega}    )_\infty,i_\infty) \gte (( \prod_n [\kappa'_n]^{<\omega}    )_\infty,i_\infty)$ and $\sum_n P(\kappa_n) \times (\kappa_n,=) \gte \sum_n P(\kappa'_n) \times (\kappa'_n,=)$.

\end{lem}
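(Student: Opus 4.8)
The plan is to handle the two assertions separately, and in each to use the symmetry between $(\kappa_n)_n$ and $(\kappa'_n)_n$: it suffices to prove the single Tukey quotient $\gtq$ from the unprimed relation to the primed one, since exchanging the roles of the two sequences then gives the reverse quotient and hence $\gte$. Both arguments rest on the same reindexing. As $(\kappa_n)_n$ strictly increases with supremum $\kappa$ and each $\kappa'_m<\kappa$, I can recursively pick a strictly increasing $\rho:\omega\to\omega$ with $\kappa_{\rho(m)}\ge\kappa'_m$ for all $m$, and for each $m$ fix a surjection $s_m:\kappa_{\rho(m)}\twoheadrightarrow\kappa'_m$ together with a section $t_m:\kappa'_m\to\kappa_{\rho(m)}$, so that $s_m\circ t_m$ is the identity on $\kappa'_m$.

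For the first assertion I would build an explicit morphism $(\psi_-,\phi_+)$ witnessing $\bigl((\prod_n[\kappa_n]^{<\omega})_\infty,i_\infty\bigr)\gtq\bigl((\prod_n[\kappa'_n]^{<\omega})_\infty,i_\infty\bigr)$. On the upper side set $\phi_+((F_n)_n)=(G_m)_m$ with $G_m=s_m(F_{\rho(m)})\cup\{0\}$, and on the lower side $\psi_-((H_m)_m)=(E_n)_n$ with $E_{\rho(m)}=t_m(H_m)$ and $E_n=\emptyset$ for $n\notin\rho(\omega)$. Condition (iii) is then a short computation: if $\psi_-(b)\mathrel{i_\infty}a$ then $t_m(H_m)\cap F_{\rho(m)}\ne\emptyset$ for infinitely many $m$, and any witness $t_m(\eta)\in F_{\rho(m)}$ with $\eta\in H_m$ gives $\eta=s_m(t_m(\eta))\in s_m(F_{\rho(m)})\subseteq G_m$, so $H_m\cap G_m\ne\emptyset$ for those infinitely many $m$, i.e.\ $b\mathrel{i_\infty}\phi_+(a)$.

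The one genuinely delicate point — and the reason for the two apparently cosmetic features above — is verifying that $\phi_+$ and $\psi_-$ really land in the restricted sets $(\prod[\cdots]^{<\omega})_\infty$, that is, that their outputs are nonempty infinitely often. For $\psi_-$ this is automatic: $t_m$ is injective, so $E_{\rho(m)}=t_m(H_m)$ is nonempty whenever $H_m$ is, and since $H_m\ne\emptyset$ for infinitely many $m$ while $\rho$ is injective, these nonempty values sit at infinitely many distinct coordinates. For $\phi_+$ it can genuinely fail, because $(F_n)_n$ need not be nonempty infinitely often along the \emph{sparse} set $\rho(\omega)$; this is exactly what the reserved marker $0$ in $G_m=s_m(F_{\rho(m)})\cup\{0\}$ repairs. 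The repair is harmless, since the marker only enlarges the $G_m$ and so can only make the consequent $b\mathrel{i_\infty}\phi_+(a)$ easier to satisfy; note in particular that the marker plays no role in the computation above, where an honest witness $\eta\in s_m(F_{\rho(m)})$ is produced.

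For the second assertion I would avoid explicit morphisms and instead exploit the sum operation. First record the monotonicity $P(\mu)\times(\mu,=)\gtq P(\nu)\times(\nu,=)$ whenever $\mu\ge\nu$: a surjection $\mu\twoheadrightarrow\nu$ with a section, applied in each coordinate, gives $P(\mu)\gtq P(\nu)$ — and here there is no infinitely-often bookkeeping, since the coordinate universe is the constant $\mu$ — while $(\mu,=)\gtq(\nu,=)$ is immediate from an injection $\nu\hookrightarrow\mu$, and one multiplies the two morphisms. Applying this with $\mu=\kappa_{\rho(m)}\ge\kappa'_m=\nu$ and summing over $m$ via Lemma~\ref{l:sum2} yields $\sum_m P(\kappa_{\rho(m)})\times(\kappa_{\rho(m)},=)\gtq\sum_m P(\kappa'_m)\times(\kappa'_m,=)$. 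Finally, since $\rho$ is injective, the left-hand sum is precisely the subsum of $\sum_n P(\kappa_n)\times(\kappa_n,=)$ indexed by $\rho(\omega)\subseteq\omega$, so Lemma~\ref{l:sum1} gives $\sum_n P(\kappa_n)\times(\kappa_n,=)\gtq\sum_m P(\kappa_{\rho(m)})\times(\kappa_{\rho(m)},=)$, and composing the two quotients completes this direction.
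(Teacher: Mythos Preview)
Your proof is correct and rests on the same interleaving idea the paper invokes. For the second assertion you carry out exactly what the paper sketches: pass to a subsequence via $\rho$ so that $\kappa_{\rho(m)}\ge\kappa'_m$, use termwise monotonicity (your $P(\mu)\times(\mu,=)\gtq P(\nu)\times(\nu,=)$) together with Lemma~\ref{l:sum2}, and then Lemma~\ref{l:sum1} to pass from the subsum back to the full sum.

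For the first assertion your argument is actually more explicit than the paper's. The paper's one-line appeal to Lemmas~\ref{l:sum1} and~\ref{l:sum2} does not literally apply to $\bigl((\prod_n[\kappa_n]^{<\omega})_\infty,i_\infty\bigr)$, which is not a sum; what is really meant is the interleaving principle, and your direct construction of $(\psi_-,\phi_+)$ via the reindexing $\rho$ and the surjection/section pairs $s_m,t_m$ makes this precise. Your attention to the ``infinitely often'' bookkeeping --- the marker $\{0\}$ in $\phi_+$ and the injectivity of $\rho$ and $t_m$ for $\psi_-$ --- addresses exactly the points where a careless write-up would have a gap. (One cosmetic remark: the marker $0$ requires $\kappa'_m\ge 1$, but this is harmless since finitely many initial terms equal to $0$ can be dropped without changing the Tukey type.)
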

\begin{proof}
    This is clear from Lemmas~\ref{l:sum1} and~\ref{l:sum2} because the sequences $(\kappa_n)_n$ and $(\kappa_n')_n$ are interleaved.
\end{proof}

\subsection{Topology: Notation, and a Special Space}
Our topological definitions and notation are standard, see \cite{Eng}. However we give concrete names and notations to certain objects. Recall $\mathop{NC}(X)$ is the collection of all non-closed subsets of a space $X$, and $\mathop{CS^+}(X)$ is all infinite convergent sequences with limit.
Additionally, let $\mathop{CS}(X)$ be all infinite convergent sequences without limit  and
(in other words, $\mathop{CS}(X)=\{e(\omega) : e $ is an embedding of $\omega+1$ into $X\}$ and $\mathop{CS^+}(X)=\{e(\omega+1) : e $ is an embedding of $\omega+1$ into $X\}$).
For any $S$ in $\mathop{CS}(X)$ set $S^+$ to be $S$ along with its limit, so $S^+$ is in $\mathop{CS^+}(X)$.

One space plays a key role throughout this paper.
The metric fan, $\mathbb{F}$, has underlying set $(\omega \times \omega) \cup \{\infty\}$, and topology where points of $\omega \times \omega$ are isolated while basic neighborhoods of $\infty$ have the form $\{\infty\} \cup ([n,\infty) \times \omega)$, for $n \in \omega$.
The infinite convergent sequences are those subsets (i) containing $\infty$ (which is the limit), (ii) having finite intersection with every vertical line $\{n\} \times \omega$, and (iii)  infinitely many intersections with vertical lines are non-empty.
The compact subsets of $\mathbb{F}$ are those that are finite, along with the infinite convergent sequences.
Clearly, the metric fan is not locally compact. Further, it detects whether suitably nice spaces are locally compact.
\begin{lem}
    A separable metrizable space $M$ is not locally compact if and only it it contains a closed copy of $\mathbb{F}$.
\end{lem}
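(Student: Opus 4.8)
The statement is a biconditional, and I would prove the two implications separately; the reverse direction is routine, while the forward direction carries all the work. For the reverse direction, suppose $M$ contains a closed copy of $\mathbb{F}$. I would first note that $\mathbb{F}$ itself is not locally compact at its non-isolated point $\infty$: every basic neighbourhood $\{\infty\}\cup([n,\infty)\times\omega)$ contains a full vertical line $\{m\}\times\omega$ (with $m\ge n$), which is infinite and closed discrete in $\mathbb{F}$ (it meets a single vertical line infinitely, so is not a convergent sequence, hence not compact), so no neighbourhood of $\infty$ has compact closure. Since a separable metrizable $M$ is locally compact Hausdorff and closed subspaces of locally compact Hausdorff spaces are locally compact, a closed copy of $\mathbb{F}$ inside a locally compact $M$ would itself be locally compact, a contradiction. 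Hence a closed copy of $\mathbb{F}$ forces $M$ to fail local compactness.

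For the forward direction, fix a metric $d$ and a point $p$ with no compact neighbourhood, so that every closed ball $\overline{B}(p,1/n)=\{x:d(x,p)\le 1/n\}$ is non-compact (a compact such ball would be a compact neighbourhood of $p$). The first key step is a localisation lemma. Since $\overline{B}(p,1/n)$ is a non-compact, hence non–sequentially-compact, \emph{closed} subspace of $M$, it contains a sequence with no subsequence converging in $M$. Its set of values is infinite (a finite value set would yield a constant convergent subsequence) and has no cluster point in $M$, so it is an infinite closed discrete set $D_n\subseteq \overline{B}(p,1/n)$. Because $D_n$ is closed discrete, $p$ is not one of its cluster points, so $D_n$ is bounded away from $p$: there is $\rho_n>0$ with $D_n\subseteq\{x:\rho_n\le d(x,p)\le 1/n\}$. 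Thus non-local-compactness supplies infinite closed discrete sets concentrated in shells around $p$ whose outer radii $1/n$ tend to $0$.

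The second step assembles these into a fan. I would recursively choose $n_1<n_2<\cdots$ with $1/n_{m+1}<\rho_{n_m}$, so the shells containing $D_{n_1},D_{n_2},\dots$ are pairwise disjoint and nested toward $p$, with $D_{n_m}\subseteq\{x:\rho_{n_m}\le d(x,p)\le 1/n_m\}$ and $1/n_m\to 0$. Set $E=\{p\}\cup\bigcup_m D_{n_m}$. The crucial verification is that $p$ is the \emph{only} cluster point of $\bigcup_m D_{n_m}$: any $q\ne p$ has $B(q,d(q,p)/2)$ meeting only finitely many $D_{n_m}$ (their distances to $p$ tend to $0$), and a finite union of closed discrete sets is closed discrete, so $q$ is not a cluster point; meanwhile $p$ is one, since the outer radii tend to $0$. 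Hence $E$ is closed in $M$ and every point of $E\setminus\{p\}$ is isolated in $E$. Finally I would check $E\cong\mathbb{F}$ via $\infty\mapsto p$ and a bijection carrying the $m$-th vertical line onto an enumeration of $D_{n_m}$: isolated points correspond, and for the limit point one has $E\cap B(p,\epsilon)\supseteq\{p\}\cup\bigcup_{1/n_m<\epsilon}D_{n_m}$, where $\{m:1/n_m<\epsilon\}$ is a tail $[m_0,\infty)$ since $1/n_m\downarrow 0$; each $\{p\}\cup\bigcup_{m\ge m_0}D_{n_m}$ is open in $E$ (its complement in $E$ is the closed discrete set $\bigcup_{m<m_0}D_{n_m}$). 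So the neighbourhood filter of $p$ in $E$ is generated by these tails, exactly matching the filter $\{\infty\}\cup([m_0,\infty)\times\omega)$ of $\infty$ in $\mathbb{F}$, giving the homeomorphism.

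I expect the main obstacle to be the localisation step combined with the ``only $p$ is a cluster point'' verification. The content is not merely producing some non-compactness near $p$, but producing infinite closed discrete sets that can be squeezed into arbitrarily small shells \emph{bounded away} from $p$ and then separated into disjoint nested shells, so that the assembled set $E$ accumulates at $p$ and nowhere else and its subspace topology reproduces $\mathbb{F}$ precisely, rather than some coarser or finer fan-like space.
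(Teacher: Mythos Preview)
Your proof is correct. The paper states this lemma without proof, treating it as a standard fact about the metric fan, so there is nothing to compare against; your argument is the standard one, and the details you give for the forward direction (extracting infinite closed discrete sets from non-compact closed balls, separating them into disjoint shells via the recursion $1/n_{m+1}<\rho_{n_m}$, and verifying that $p$ is the unique accumulation point so that the subspace topology on $E$ reproduces $\mathbb{F}$) are all sound.
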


\section{The $k$-Structure}\label{s:k}

\subsection{General Results}

We start by recording the impact of basic topological operations on the $k$-structure. The proof is left to the reader.

\begin{lem} Let $X$ and $Y$ be spaces.
 Then $\mathbf{k}(X \oplus Y) \gte \mathbf{k}(X) \with \mathbf{k}(Y)$. Further, if $Y$ is a closed subspace of $X$ or $Y$ is a quotient of $X$ then $\mathbf{k}(X) \gtq \mathbf{k}(Y)$. % and $\mathbf{seq}(X) \gtq \mathbf{seq}(Y)$.
\end{lem}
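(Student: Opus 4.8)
The plan is to prove the two ``further'' statements first, since the $\oplus$ statement will partly follow from the closed-subspace case. Throughout, the maps witnessing each Tukey quotient are the obvious topological ones, and the only real work is checking condition (iii) of the morphism definition.

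For the closed subspace case, suppose $Y \subseteq X$ is closed. I would define $\phi_+ : \K(X) \to \K(Y)$ by $\phi_+(K) = K \cap Y$; this is compact because $Y$ is closed, so $K \cap Y$ is a closed subset of the compact set $K$. Then define $\psi_- : \mathop{NC}(Y) \to \mathop{NC}(X)$ to be the inclusion $\psi_-(B)=B$: since $Y$ is closed in $X$, a subset of $Y$ is closed in $Y$ if and only if it is closed in $X$, so $B$ non-closed in $Y$ is non-closed in $X$. To check (iii) I would observe that for $B \subseteq Y$ and $K \in \K(X)$ one has $B \cap K = B \cap (K \cap Y)$, and that $K \cap Y$ is closed in $K$; hence $B \cap K$ is closed in $K$ if and only if it is closed in $K \cap Y$. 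Thus $\psi_-(B) \nci K$ holds if and only if $B \nci (K \cap Y) = \phi_+(K)$, which gives (iii) (in fact as an equivalence).

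For the quotient case, let $q : X \to Y$ be a quotient map. Define $\phi_+(K) = q(K)$ (compact as a continuous image) and $\psi_-(B) = q^{-1}(B)$; because $q$ is a quotient map, $B$ is non-closed in $Y$ if and only if $q^{-1}(B)$ is non-closed in $X$, so $\psi_-$ does land in $\mathop{NC}(X)$. For (iii) I would argue by contraposition: assuming $B \ci q(K)$, that is, $B \cap q(K)$ is closed in $q(K)$, continuity of the restriction $q|_K : K \to q(K)$ gives that $(q|_K)^{-1}(B \cap q(K))$ is closed in $K$; but this preimage is exactly $K \cap q^{-1}(B) = K \cap \psi_-(B)$, so $\psi_-(B) \ci K$. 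The key point is that only continuity of $q|_K$ is used, so no separation axioms on $X$ or $Y$ are required.

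Finally, for $\mathbf{k}(X \oplus Y) \gte \mathbf{k}(X) \with \mathbf{k}(Y)$ I would treat the two Tukey quotients separately. Since $X$ and $Y$ are clopen, hence closed, in $X \oplus Y$, the closed-subspace case gives $\mathbf{k}(X \oplus Y) \gtq \mathbf{k}(X)$ and $\mathbf{k}(X \oplus Y) \gtq \mathbf{k}(Y)$, and Lemma~\ref{l:with} immediately yields $\mathbf{k}(X \oplus Y) \gtq \mathbf{k}(X) \with \mathbf{k}(Y)$. For the reverse quotient I would use that every compact $K \subseteq X \oplus Y$ splits as $(K \cap X) \oplus (K \cap Y)$ with each piece compact, and every non-closed $A \subseteq X \oplus Y$ has $A \cap X$ non-closed in $X$ or $A \cap Y$ non-closed in $Y$. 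Then set $\phi_+(K_X, K_Y) = K_X \oplus K_Y$ and let $\psi_-$ send $A$ to $A \cap X$ when that is non-closed and to $A \cap Y$ otherwise; checking (iii) reduces, via the clopen splitting $A \cap (K_X \oplus K_Y) = (A \cap K_X) \oplus (A \cap K_Y)$, to the observation that closedness in $K_X \oplus K_Y$ is detected on each clopen piece. I expect the only genuinely delicate step to be getting the direction of the implication right in the quotient case and confirming that continuity alone suffices there; the remaining verifications are routine bookkeeping about subspace and clopen-sum closedness.
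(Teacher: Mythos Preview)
Your proof is correct in all three parts; the maps you choose are exactly the natural ones and your verifications of condition~(iii) are accurate (including the contrapositive argument in the quotient case, where only continuity of $q|_K$ is needed). The paper itself leaves this lemma to the reader, so there is no alternative approach to compare against.
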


We now specialize to sequential spaces, noting that our main results below are for separable metrizable spaces, which are, indeed, sequential.
In a sequential space we can identify a small subset, namely $\mathop{CS}(X)$, of $\mathop{NC}(X)$ to which  we can restrict attention. Below, for sequential spaces,  we use $(\mathop{CS}(X),\K(X),\nci)$ interchangeably with $\mathbf{k}(X)$.
\begin{lem} For $X$ sequential,  $\mathbf{k}(X) \gte (\mathop{CS}(X),\K(X),\nci)$.
\end{lem}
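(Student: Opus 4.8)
The plan is to establish the two Tukey quotients $\mathbf{k}(X) \gtq (\mathop{CS}(X),\K(X),\nci)$ and $(\mathop{CS}(X),\K(X),\nci) \gtq \mathbf{k}(X)$ separately, in each case exhibiting an explicit morphism whose upper map $\phi_+$ is the identity on $\K(X)$. Since the two relations share the same range $\K(X)$ and the same predicate $\nci$, and differ only in their set of challenges, everything reduces to comparing the challenge sets $\mathop{CS}(X) \subseteq \mathop{NC}(X)$ together with a single witness-transfer argument.

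For the first (easy) direction I would take $\psi_-$ to be the inclusion $\mathop{CS}(X) \hookrightarrow \mathop{NC}(X)$, which is legitimate because a convergent sequence with its limit deleted is never closed, and $\phi_+ = \mathrm{id}_{\K(X)}$. The morphism condition $\psi_-(S)\,\nci\,K \implies S\,\nci\,\phi_+(K)$ then reads $S\,\nci\,K \implies S\,\nci\,K$ and holds trivially.

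The substance lies in the reverse direction $(\mathop{CS}(X),\K(X),\nci) \gtq \mathbf{k}(X)$, which is where sequentiality enters. For each non-closed $F \in \mathop{NC}(X)$, sequentiality (in the form that the topology is generated by $\mathop{CS}^+(X)$) supplies a convergent sequence $T_F \in \mathop{CS}(X)$ with $T_F \subseteq F$ whose limit $p_F$ lies outside $F$; I would fix one such by choice and set $\psi_-(F) = T_F$, again keeping $\phi_+ = \mathrm{id}_{\K(X)}$. It then remains to verify the morphism condition $T_F\,\nci\,K \implies F\,\nci\,K$. The key observation is that the only accumulation point of the sequence $T_F$ is its limit $p_F$, so $T_F \cap K$ is non-closed in $K$ precisely when $T_F \cap K$ is infinite and $p_F \in K$; in that case, since $T_F \cap K \subseteq F \cap K$ accumulates at $p_F$ while $p_F \notin F$, the point $p_F$ lies in $\overline{F \cap K} \setminus (F \cap K)$, so $F \cap K$ is non-closed in $K$, i.e. $F\,\nci\,K$, as required.

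The main obstacle is this reverse direction, and within it the transfer step: one must guarantee that a compact set $K$ witnessing the non-closedness of the thin set $T_F$ also witnesses the non-closedness of the potentially large set $F$. This is exactly what the ``single accumulation point'' structure of a convergent sequence provides, and it is the precise reason that replacing $\mathop{NC}(X)$ by the much smaller $\mathop{CS}(X)$ costs nothing. Producing $T_F$ in the first place is the only use of the hypothesis that $X$ is sequential, and no further properties of $X$ are needed.
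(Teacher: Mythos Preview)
Your proof is correct and follows exactly the same approach as the paper: identity/inclusion maps for the easy direction, and for the reverse direction choosing for each non-closed $F$ a sequence $T_F\subseteq F$ converging outside $F$ (using sequentiality) with $\phi_+$ the identity. The paper simply asserts that the resulting pair ``clearly form a morphism,'' whereas you spell out the verification that $T_F\,\nci\,K \implies F\,\nci\,K$ via the fact that the unique accumulation point $p_F$ of $T_F$ lies in $K$ but outside $F$; this added detail is accurate and is the only place your write-up differs from the paper's.
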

\begin{proof}
To see    $(\mathop{NC}(X),\K(X),\nci) \gtq (\mathop{CS}(X),\K(X),\nci)$ just take $\phi_+$ and $ \psi_-$ to be the identity maps.
For the converse, let $\phi_+$ be the identity map. Define $ \psi_-: \mathop{NC}(X) \to \mathop{CS}(X)$ by $ \psi_-(F)=S$ where for the non-closed set $F$ we have picked an infinite sequence $(x_n)_n$ on $F$ converging to a point outside $F$, and set $S=\{x_n\}_n$.
Now $ \psi_-, \phi_+$  clearly form a morphism.
\end{proof}

\begin{lem}\label{l:below_K}
For any space $X$ we have $\K(X) \gtq (\mathop{CS}(X),\K(X),\nci)$.
Hence, if $X$ is sequential then $\K(X) \gtq \mathbf{k}(X)$.
\end{lem}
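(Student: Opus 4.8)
The plan is to exhibit an explicit Tukey morphism witnessing $\K(X) \gtq (\mathop{CS}(X),\K(X),\nci)$, reading the directed set $\K(X)$ as the relation $(\K(X),\K(X),\subseteq)$. By the definition of morphism, to get $\K(X) \gtq (\mathop{CS}(X),\K(X),\nci)$ I need an upper map $\phi_+:\K(X)\to\K(X)$ and a lower map $\psi_-:\mathop{CS}(X)\to\K(X)$ such that, for every convergent sequence $S\in\mathop{CS}(X)$ and every compact $K\in\K(X)$, the implication $\psi_-(S)\subseteq K \implies S\nci\phi_+(K)$ holds. The natural choice is to let $\phi_+$ be the identity on $\K(X)$ and to send each sequence to the compact set obtained by adjoining its limit: define $\psi_-(S)=S^+$. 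Since $S$ is an infinite convergent sequence, $S^+$ is compact, so $\psi_-$ genuinely lands in $\K(X)$ and both maps are well defined.

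To verify the morphism condition, I would suppose $\psi_-(S)=S^+\subseteq K$ and write $x$ for the limit of $S$, noting that $x\in S^+\setminus S$ precisely because $S\in\mathop{CS}(X)$ is a sequence \emph{without} its limit. Then $x\in K$ and $S\subseteq K$, so $S\cap K=S$; but $x$ is a limit point of $S$ lying in $K$ yet outside $S$, so $S$ fails to be closed in $K$. That is exactly $S\nci K=\phi_+(K)$, which establishes the first assertion. For the second assertion I would simply invoke the preceding lemma, which gives $\mathbf{k}(X)\gte(\mathop{CS}(X),\K(X),\nci)$ whenever $X$ is sequential, and then chain with the transitivity of $\gtq$ to conclude $\K(X)\gtq\mathbf{k}(X)$.

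There is no real analytic difficulty here; the only thing to watch is bookkeeping of the morphism's direction, namely that the lower map runs from the target's domain $\mathop{CS}(X)$ \emph{back} into the domain $\K(X)$ of $\K(X)$, and that it is precisely the presence of the limit point $x$ inside any $K\supseteq S^+$ that forces $S\cap K$ to be non-closed. The conceptual point worth emphasizing is that $S^+$ is the ``smallest'' compact set detecting the non-closedness of $S$, which is what makes the identity an admissible upper map.
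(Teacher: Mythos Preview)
Your proof is correct and essentially identical to the paper's: both take $\phi_+$ to be the identity on $\K(X)$ and $\psi_-(S)$ to be the sequence together with its limit (the paper writes $\overline{S}$ and remarks that this equals $S^+$), and both verify the morphism condition by observing that if $S^+\subseteq K$ then the limit $x\in K\setminus S$ witnesses that $S=S\cap K$ is not closed in $K$. Your deduction of the ``hence'' clause via the preceding lemma and transitivity is exactly what the paper intends as well.
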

\begin{proof}
Define $\phi_+(K)=K$ and $ \psi_-(S)=\cl{S}$. Noting that the closure of a convergent sequence without limit point is the sequence \emph{with} the limit, which is compact, we see $\phi_+$ and $ \psi_-$ are well defined.
Take any  $S$ from $\mathop{CS}(X)$. If $ \psi_-(S)=\cl{S} \subseteq K$ then $S \cap K \ne \cl{S} \cap K$, so $S$ is not closed in $K=\phi_+(K)$,  $S(\nci)\phi_+(K)$, as required for $\phi_+$ and $ \psi_-$ to be a morphism.
\end{proof}

\begin{lem}\label{l:csp}
    Let $X$ be sequential. Then $(\mathop{CS^+}(X),\K(X),\subseteq) \gtq \mathbf{k}(X)$.
\end{lem}
\begin{proof}
Set $\phi_+$ to be the identity and $ \psi_-(S)=S^+$. If $ \psi_-(S) \subseteq K$, so $S^+ \subseteq K$,  then $S\cap K$ is not closed in $K$, so $S\nci\phi_+(K)$.
\end{proof}

Now we specialize further to separable metrizable spaces. Let $M$ be separable metrizable.
Write $I(M)$ for all the isolated points of $M$, and $M'$ for $M \setminus I(M)$.
\begin{lem}\label{l:above_kc}
    If $M$ is separable metrizable where $M'$ not compact then $\mathbf{k}(M) \gtq \mathbf{kc}(M)$.
\end{lem}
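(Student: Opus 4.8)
The plan is to construct an explicit Tukey morphism $(\psi_-,\phi_+)$ witnessing
$\mathbf{k}(M)=(\mathop{NC}(M),\K(M),\nci)\gtq(M,\K(M),\in)=\mathbf{kc}(M)$, so I need
$\phi_+:\K(M)\to\K(M)$ and $\psi_-:M\to\mathop{NC}(M)$ satisfying $\psi_-(x)\nci K\implies x\in\phi_+(K)$. Two preliminary observations drive everything. First, since $M$ is separable metrizable it is hereditarily Lindel\"of, so its set $I(M)$ of isolated points (a discrete, hence open, subspace) is countable; enumerate it as $\{a_n:n\in\omega\}$. Second, $M'=M\setminus I(M)$ is closed in $M$, separable metrizable, and by hypothesis not compact, so it contains a countably infinite subset $D=\{y_n:n\in\omega\}$ that is closed and discrete in $M'$, hence in $M$; fix an injection $n\mapsto y_n$ of the index set of $I(M)$ into $D$. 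For each non-isolated $p\in M'$ also fix a nontrivial sequence $s(p)\subseteq M\setminus\{p\}$ converging to $p$ (possible as $M$ is metrizable), so $s(p)\in\mathop{NC}(M)$ with unique limit point $p\notin s(p)$.

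I would then define the two maps. Set $\psi_-(x)=s(x)$ for each non-isolated $x\in M'$, and $\psi_-(a_n)=\{a_n\}\cup s(y_n)$ for each isolated point $a_n$; each is non-closed, with unique limit point (namely $x$, respectively $y_n$) lying outside it. Set $\phi_+(K)=K\cup\{a_n:y_n\in K\}$.

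Next I would verify the three requirements. For compactness of $\phi_+(K)$: since $D$ is closed and discrete while $K$ is compact, $K\cap D$ is finite, so by injectivity $\{a_n:y_n\in K\}$ is a finite set of points, and adjoining finitely many points to the compact set $K$ leaves it compact, whence $\phi_+(K)\in\K(M)$. For the morphism condition in the non-isolated case: if $\psi_-(x)\nci K$ then $s(x)\cap K$ is not closed in $K$, and as the only possible limit point of $s(x)$ is $x$, this forces $x\in K\subseteq\phi_+(K)$. For the isolated case: if $\psi_-(a_n)\nci K$ then $(\{a_n\}\cup s(y_n))\cap K$ is not closed in $K$; since $a_n$ is isolated, the only limit point of this set is $y_n$, so $y_n\in K$, whence $a_n\in\phi_+(K)$ by definition. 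This establishes the required morphism.

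The one real obstacle is covering the isolated points while keeping every $\phi_+(K)$ compact: a cofinal (generating) family automatically catches each nontrivial convergent sequence and so covers $M'$, but isolated points support no convergent sequence of their own, and naively adjoining all of them to $K$ would destroy compactness. The hypothesis that $M'$ is not compact is used precisely here: it supplies the infinite closed discrete set $D$, and the finiteness of $K\cap D$ for compact $K$ is exactly what lets me \emph{pin} each isolated $a_n$ to a distinct $y_n\in D$ and recover it in $\phi_+(K)$ only finitely often. I should finally note the degenerate cases where $I(M)$ is finite or empty, for which the isolated part of the construction is vacuous and the argument simplifies immediately.
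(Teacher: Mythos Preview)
Your proof is correct and follows essentially the same approach as the paper: both use the non-compactness of $M'$ to fix an infinite closed discrete set in $M'$, tag each isolated point with a distinct member of that set, define $\phi_+(K)$ by adjoining the (finitely many) isolated tags whose markers lie in $K$, and define $\psi_-$ via convergent sequences to the markers (for isolated points) or to the point itself (for non-isolated points). The only cosmetic difference is that you include $a_n$ in $\psi_-(a_n)=\{a_n\}\cup s(y_n)$ whereas the paper does not; this is harmless.
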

\begin{proof}
Write $I(M)=\{z_n\}_{n \in \omega}$, and fix an infinite closed discrete subset, $E=\{e_n\}_{n \in \omega}$ of $M'$. Define $\phi^+(K) = K \cup Z(K)$ where $Z(K)=\{z_n :  e_n \in K\}$. Define $ \psi_-(z_n)$ to be a member of $\mathop{CS}(M)$ converging to $e_n$, and for $x$ in $M'$ set $ \psi_-(x)$ to be a  member of $\mathop{CS}(M)$ converging to $x$.
Observe that $\phi_+$ and $ \psi_-$ are well defined.

Now suppose $ \psi_-(x)\nci K$. We show $x\in \phi_+(K)$. Note $ \psi_-(x)$ is a sequence converging to an element of $K$. If $x=z_n$, for some $n$, then the limit is $e_n$, and from $e_n \in K$ we have $x=z_n \in Z(K) \subseteq \phi_+(K)$.
On the other hand, if $x \in M'$ then the limit is $x$, and so $x \in K \subseteq \phi_+(K)$.
\end{proof}

\begin{thm}\label{th:kc^om_above_k}
    Let $M$ be separable metrizable.
    Then     $\mathbf{kc}(M)^\omega  \gtq \mathbf{k}(M)$.
\end{thm}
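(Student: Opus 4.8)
The plan is to exhibit an explicit Tukey morphism witnessing $\mathbf{kc}(M)^\omega \gtq \mathbf{k}(M)$. Since $M$ is separable metrizable it is first countable, hence sequential, so by the lemma identifying the $k$-structure of a sequential space with its restriction to convergent sequences it suffices to produce a morphism $(\psi_-,\phi_+)$ from $\mathbf{kc}(M)^\omega$ (whose domain is $M^\omega$, range $\K(M)^\omega$, with $(x_n)_n$ related to $(K_n)_n$ exactly when $x_n\in K_n$ for all $n$) to $(\mathop{CS}(M),\K(M),\nci)$. Fix a compatible metric $d$ on $M$. The condition to verify is that whenever $\psi_-(S)$ is pointwise caught by $(K_n)_n$ we have $S \nci \phi_+((K_n)_n)$, that is, the \emph{single} compact set $\phi_+((K_n)_n)$ must meet $S$ in infinitely many points and contain its limit.

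First I would define $\psi_-$ so that the zeroth coordinate records the limit and the later coordinates record a fast-converging subsequence. Given $S\in\mathop{CS}(M)$ converging to $p\notin S$, choose distinct $b_1,b_2,\dots\in S$ with $d(b_k,p)<1/k$ (possible since $S\to p$), and set $\psi_-(S)=(p,b_1,b_2,\dots)$. Then I would define $\phi_+$ on $(K_n)_n\in\K(M)^\omega$ by
\[
\phi_+((K_n)_n)=K_0\cup\bigcup_{k\ge 1}\bigl(K_k\cap \overline{B}(K_0,1/k)\bigr),
\]
where $\overline{B}(K_0,1/k)=\{x:d(x,K_0)\le 1/k\}$ (and $\phi_+=\emptyset$ when $K_0=\emptyset$). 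The conceptual obstacle this design overcomes is that one \emph{cannot} simply take $\bigcup_n K_n$, which need not be compact; trimming each $K_k$ to the shrinking closed neighbourhood $\overline{B}(K_0,1/k)$ of the compact ``core'' $K_0$ is precisely what repairs compactness, and coordinating $\psi_-$ (the rate $d(b_k,p)<1/k$) with $\phi_+$ (the trimming radius $1/k$) is what guarantees the genuine sequence points survive the trimming.

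The main work, which I expect to be the crux, is checking compactness of $\phi_+((K_n)_n)$ and the morphism condition. For compactness I would argue sequential compactness, which is equivalent to compactness in the metrizable $M$: any sequence from $\phi_+((K_n)_n)$ either has infinitely many terms in one compact piece $K_0$ or $K_k\cap\overline{B}(K_0,1/k)$ (each compact, being closed in the compact $K_k$), or else has terms $x_j\in K_{k_j}\cap\overline{B}(K_0,1/k_j)$ with $k_j\to\infty$, whence $d(x_j,K_0)\to 0$ and compactness of $K_0$ produces a subsequence converging into $K_0$; in every case the limit lies back in $\phi_+((K_n)_n)$. For the morphism condition, suppose $\psi_-(S)$ is pointwise caught by $(K_n)_n$, so $p\in K_0$ and $b_k\in K_k$ for all $k\ge 1$. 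Then $p\in K_0\subseteq\phi_+((K_n)_n)$, and since $d(b_k,K_0)\le d(b_k,p)<1/k$ we get $b_k\in K_k\cap\overline{B}(K_0,1/k)\subseteq\phi_+((K_n)_n)$ for every $k$. Hence $\phi_+((K_n)_n)$ contains the limit $p$ together with the infinite subset $\{b_k\}$ of $S$, so $S\cap\phi_+((K_n)_n)$ is not closed in $\phi_+((K_n)_n)$, giving $S\nci\phi_+((K_n)_n)$ as required. By Lemma~\ref{l:ulm} one could present only one half of the pair, but giving both maps explicitly is cleanest here.
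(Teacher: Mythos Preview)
Your proof is correct and shares the same core insight as the paper's argument: use $K_0$ as an anchoring compact set and trim each $K_k$ down to its part lying close to $K_0$, so that the union stays compact while still capturing infinitely many terms of the sequence. However, your implementation is cleaner. The paper fixes a metrizable compactification $\gamma M$ and a countable base $\{B_m\}$ with diameters tending to $0$, defines $\phi_+((K_n)_n)=K_0\cup\bigcup\{K_m\cap\overline{B_m}:m\ge 1,\ K_0\cap\overline{B_m}\ne\emptyset\}$, and proves compactness by an open-cover argument; its $\psi_-$ is correspondingly more elaborate, threading the sequence through the base elements. Your version replaces this combinatorial apparatus by the intrinsic metric neighbourhoods $\overline{B}(K_0,1/k)$ and a sequential-compactness check, and your $\psi_-$ simply records the limit together with a $1/k$-fast subsequence. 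This buys you a shorter, more transparent proof; the paper's base-indexed construction would adapt more readily to settings where one wants to avoid metric estimates, but for separable metrizable $M$ your route is a genuine simplification.
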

\begin{proof} We take $\mathbf{k}(M)=(\mathop{CS}(X),\K(X),\nci)$.
Let $\gamma M$ be a metrizable compactification of $M$. Fix a compatible metric $d$, and select a base, $\mathcal{B}=\{B_m : m \ge 1\}$ for $\gamma M$ where $B_1=\gamma M$, $\mathop{diam} B_m \to 0$ and for every $x$ in $\gamma M$ the set $\{n : x \in B_n\}$ is infinite.

For any sequence $\mathbb{K}=(K_m)_{m \in \omega}$ from $\K(M)^\omega$ define
\[\phi_+(\mathbb{K}) = K_0 \cup \bigcup \{ K_m \cap \cl{B_m} : m\ge 1 \ \& \ K_0 \cap \cl{B_m} \ne \emptyset\}.\]
Then  $\phi_+(\mathbb{K})$ is compact. To see this
take any cover by basic open sets. Some finite subcollection, say $\mathcal{U}_1$, covers the compact set $K_0$.
As $K_0$ and $\gamma M \setminus \bigcup \mathcal{U}_1$ are compact and disjoint, the distance between them is strictly positive. As $\mathop{diam} B_m \to 0$, it follows that,  of the $\cl{B_m}$ that meet $K_0$, all but finitely many  are contained in $\bigcup \mathcal{U}_1$. Hence $\phi_+(\mathbb{K}) \setminus \bigcup  \mathcal{U}_1$ is compact, and can be covered with a finite subcollection, say $\mathcal{U}_2$, from the basic open cover. Now $\mathcal{U}_1 \cup \mathcal{U}_2$ is a finite subcover of $\phi_+(\mathbb{K})$.

For  any sequence $S=(x_n)_n$  from $\mathop{CS}(M)$, say  converging to $x$, define $\psi_-(S)$ in $M^\omega$ as follows.
From the base $\mathcal{B}$ pick a local base at $x$, say $(B_{m_n})_{n\ge 1}$,  where $(m_n)_n$ is strictly increasing, $m_1=1$ and  $\cl{B_{m_{n+1}}} \subseteq B_{m_n}$ for all $n$.
For each $n$, as $(x_n)_n$ converges to $x$, there are only finitely many, say $F_{m_n}$, terms of the sequence $S$ in $\cl{B_{m_n}} \setminus B_{m_{n+1}}$.
Let $M=\{m_n : F_{m_n} \ne \emptyset\}$, and note it is infinite but does not contain $0$.
For $m$ not in $M$ set $y_m=x$. Otherwise set $y_{m_n}$ to be some element from $F_{m_n}$.
Set $\psi_-(S)=(y_m)_m$.

Now suppose $\psi_-(S) \in^\omega (K_n)_n$. So for all $n$, have $y_n \in K_n$. We show $S$ has infinite intersection with $\phi_+( (K_n)_n)$.
Note first that $x=y_0$ is in $K_0$, so for all $n$ we have $K_0 \cap B_{m_n} \ne \emptyset$. Second, for each of the infinitely many $y_{m_n}$ from $M$, we have $y_{m_n} \in K_{m_n} \cap F_{m_n} \subseteq K_{m_n} \cap \cl{B_{m_n}}$. Together  it follows the infinite set $\{y_{m_n} : m_n \in M\} \subseteq S \cap \phi_+((K_n)_n)$.
\end{proof}

\subsection{Initial Structure of $\mathbf{k}(\mathcal{M})$}
Recall by $\mathbf{k}(\mathcal{M})$ we mean the  Tukey types (equivalence classes) of $\mathbf{k}(M)$, where $M$ is separable metrizable, ordered by $\gtq$.
Further recall, for separable metrizable $M$, that $I(M)$ is all isolated points of $M$, and $M'$ is $M \setminus I(M)$.
 Of course $M'$ is closed in $M$, and so $\sigma$-compact if $M$ is $\sigma$-compact.
Write $\mathop{LC}(M)$ for $\{x \in M : x$ has a compact neighborhood$\}$, and $M^\#$ for $M \setminus \mathop{LC}(M)$. Then $M$ is locally compact if and only if $M^\# \ne \emptyset$, $M^\#$ is a closed subset of $M'$, and if $M^\#$ is not compact then $M'$ is not compact.

\begin{thm}\label{th:lb_k} Let $M$ be separable metrizable.

(1) $\mathbf{k}(M) \gtq \omega$ $\iff$ $M'$ not compact.

(2) $\mathbf{k}(M) \gtq (\omom,\le_\infty)$ $\iff$ $M$ not locally compact $\iff$ $M^\#\ne \emptyset$.

(3) $\mathbf{k}(M) \gtq (\omom,\le_\infty) \with  \omega$ $\iff$ $M$ not locally compact and $M'$ not compact $\iff$ $M^\# \ne \emptyset$ and $M'$ not compact.

(4)  $\mathbf{k}(M) \gtq (\omom,\le_\infty) \times \omega$ $\iff$ $M$ not locally compact, $M'$ not compact and $M^\#$ not compact $\iff$ $M^\#$ not compact.
\end{thm}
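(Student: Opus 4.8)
The plan is to prove all four parts at once, exploiting the lattice structure of the targets and the two standard directions of a Tukey argument. First the topological conditions collapse. In (3) the right-hand clause is literally ``$M^\#\ne\emptyset$ and $M'$ not compact'' by definition of $M^\#$. In (4), since $M^\#$ is closed in $M'$ with ``$M^\#$ not compact $\Rightarrow M'$ not compact'' (recorded before the theorem) and $M^\#\ne\emptyset\Rightarrow M$ not locally compact, the three clauses reduce to the single clause ``$M^\#$ not compact''. Moreover (3) needs no separate proof: as $(\omom,\le_\infty)\with\omega$ is the $\gtq$-least upper bound of $(\omom,\le_\infty)$ and $\omega$ (Lemma~\ref{l:with}), we have $\mathbf{k}(M)\gtq(\omom,\le_\infty)\with\omega$ iff $\mathbf{k}(M)\gtq(\omom,\le_\infty)$ and $\mathbf{k}(M)\gtq\omega$, so (3) is immediate from (1) and (2). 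For each of (1), (2), (4) the ``if'' direction is an explicit morphism and the ``only if'' direction is the contrapositive, established by exhibiting an invariant of $\mathbf{k}(M)$ that blocks the quotient.

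For (1), if $M'$ is not compact then metrizability gives an infinite closed discrete $E=\{e_n\}\subseteq M'\subseteq M$; the map $n\mapsto e_n$ is a lower morphism witnessing $\mathbf{kc}(M)\gtq\omega$ (no infinite subset of $E$ sits inside a compact set), so Lemma~\ref{l:above_kc} yields $\mathbf{k}(M)\gtq\mathbf{kc}(M)\gtq\omega$. Conversely, if $M'$ is compact I show $\add(\mathbf{k}(M))>\omega$, which blocks the quotient since $\mathbf{k}(M)\gtq\omega$ would force $\add(\mathbf{k}(M))\le\add(\omega)=\omega$ (Lemma~\ref{l:gtq_dual}): given sequences $S_n\in\mathop{CS}(M)$ with limits $x_n\in M'$, replacing $S_n$ by a tail $T_n$ within $1/n$ of $x_n$ makes $K=M'\cup\bigcup_n T_n$ compact by a shrinking-ball estimate, and $K$ detects every $S_n$. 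For (2), if $M$ is not locally compact it contains a closed copy of the metric fan $\mathbb{F}$, so by closed-subspace monotonicity of $\mathbf{k}$ it suffices to show $\mathbf{k}(\mathbb{F})\gtq(\omom,\le_\infty)$: send $f$ to its graph $\{(n,f(n)):n\}\in\mathop{CS}(\mathbb{F})$ and a compact $K$ to $n\mapsto\max\{k:(n,k)\in K\}$, which lands in $\omom$ because compact subsets of $\mathbb{F}$ meet each column finitely, and $\{(n,f(n))\}\nci K$ forces $f\le_\infty\phi_+(K)$. Conversely, a locally compact separable metrizable $M$ is Lindelöf, hence $\sigma$-compact and so hemicompact, giving $\cof\K(M)\le\omega$; by Lemma~\ref{l:below_K}, $\cof(\mathbf{k}(M))\le\omega<\mathfrak b=\cof(\omom,\le_\infty)$ (Lemma~\ref{l:add_cof_omom_infty}), ruling out the quotient.

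For (4), the ``if'' direction fixes an infinite closed discrete $D=\{d_k\}\subseteq M^\#$ and extracts at each $d_k$ a closed copy $\mathbb{F}_k$ of the fan with apex $d_k$ and column points $p_k(i,n)$. Put $\psi_-(f,j)=\{p_j(i,f(i)):i\}$, converging to $d_j$, and $\phi_+(K)=\big(i\mapsto\max\{n:\exists k\,(d_k\in K\wedge p_k(i,n)\in K)\},\ \max\{k:d_k\in K\}\big)$. Closedness of each $\mathbb{F}_k$ makes $K\cap\mathbb{F}_k$ a compact subset of the fan, hence finite on each column, so the first coordinate is finite-valued, while $K\cap D$ is finite, so the second is well defined. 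Then $\psi_-(f,j)\nci K$ forces $d_j\in K$ (so $j\le\phi_+(K)_2$) and $\{i:p_j(i,f(i))\in K\}$ infinite (so $f\le_\infty\phi_+(K)_1$), giving a morphism onto $(\omom,\le_\infty)\times\omega$. The ``only if'' direction uses property $(\ast)$ of Lemma~\ref{l:with_vs_and}, which passes from larger to smaller relations and which $(\omom,\le_\infty)\times\omega$ fails. When $M^\#$ is compact, $\mathop{LC}(M)$ is open, locally compact and Lindelöf, hence $\sigma$-compact with an exhaustion $(C_k)$; covering each $C_k$ by finitely many compact neighborhoods whose interiors cover $C_k$ yields compact $K_k$ detecting every sequence with limit in $C_k$, so $T=\{S:\lim S\in\mathop{LC}(M)\}$ is $\sigma$-bounded. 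Every sequence outside $T$ has its limit in the compact set $M^\#$, so as in (1) any countable family of them is bounded. Thus $\mathbf{k}(M)$ has $(\ast)$ and cannot quotient onto $(\omom,\le_\infty)\times\omega$.

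The main obstacle is the ``if'' direction of (4), and within it two linked points. First is the fan extraction: I must produce at each $d_k$ a copy of $\mathbb{F}$ that is genuinely \emph{closed} in $M$, since it is precisely this closedness that forces $K\cap\mathbb{F}_k$ to be finite on each column and hence keeps $\phi_+(K)_1$ in $\omom$ rather than taking the value $\infty$. Second, and the conceptual crux, is obtaining the \emph{product} $(\omom,\le_\infty)\times\omega$ rather than the weaker $(\omom,\le_\infty)\with\omega$: the construction must arrange that a single compact $K$ detecting $\psi_-(f,j)$ pins down the fan index $j$ and the fan-internal height function simultaneously, which is exactly the combinatorial content separating $\times$ from $\with$ in Lemma~\ref{l:with_vs_and}. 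Verifying $(\ast)$ on the ``only if'' side is then routine once the $\sigma$-boundedness of $T$ and the compactness of the detecting sets are checked, both resting on the shrinking-ball compactness argument already introduced in (1).
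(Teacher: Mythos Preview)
Your argument is correct, and for parts (1)--(3) and the ``if'' direction of (4) it follows the paper's proof essentially verbatim (the paper phrases the ``if'' of (4) as embedding $\mathbb{F}\times\omega$ closed in $M$, which is exactly your disjoint family of closed fans at a closed discrete $D\subseteq M^\#$). The only substantive divergence is the ``only if'' direction of (4). The paper establishes the stronger fact $(\omom,\le_\infty)\with\omega \gtq \mathbf{k}(M)$ whenever $M^\#$ is compact (this is the reverse direction of Theorem~\ref{th:ub_k}(3), proved by an explicit morphism using a decreasing local base around $M^\#$), and then invokes Lemma~\ref{l:with_vs_and}. You instead lift the property $(\ast)$ out of the proof of Lemma~\ref{l:with_vs_and} and verify it directly for $\mathbf{k}(M)$: sequences with limit in $\mathop{LC}(M)$ form a $\sigma$-bounded set (via an exhaustion by compact neighbourhoods), and countably many sequences with limit in the compact set $M^\#$ are bounded by the shrinking-tail compactness argument from (1). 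Your route is more self-contained, avoiding the forward reference to Theorem~\ref{th:ub_k}; the paper's route yields the extra upper bound $(\omom,\le_\infty)\with\omega \gtq \mathbf{k}(M)$ as a by-product, which it needs independently for the initial-structure theorem. One cosmetic point: in your definition of $\phi_+(K)$ in (4) you should declare $\max\emptyset=0$ (or any fixed value), to cover compact $K$ missing $D$ entirely, as the paper does.
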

\begin{proof} First note that claim (3) follows from claims (1) and (2), via Lemma~\ref{l:with}.

\smallskip

\noindent {\textbf{For (1):}}
Suppose $M'$ is compact. Take any $ \psi_-:\omega \to \mathop{CS}(M)$. We show it is not a lower morphism, and so $\mathbf{k}(M) \not\gtq \omega$, as required.
For each $n$ in $\omega$, $S_n= \psi_-(n)$ is an infinite convergent sequence, say with limit $x_n$, which must be in $M'$. As $M'$ is compact there is an infinite subset, $A$ say, of $\omega$, such that $\{x_n : n \in A\}$ is a convergent sequence. We can select for each $n$ a tail, $T_n$, of $S_n$ so that $\bigcup_{n \in A} T_n$ has compact closure, say $K$.
Now $A$ is unbounded in $(\omega,\le)$ but $ \psi_-(A)$ is bounded in $\mathbf{k}(M)$, because for each $n$ in $A$ we have $ \psi_-(n)=S_n \nci K$ as $T_n \subseteq S_n \cap K$ is infinite.

For the reverse implication, note if $M'$ is not compact, then $M$ is not compact so, $\mathbf{kc}(M) \gtq \omega$, and then by Lemma~\ref{l:above_kc} $\mathbf{k}(M) \gtq \mathbf{kc}(M) \gtq \omega$.

\smallskip

\noindent {\textbf{For (2):}}
Suppose $M$ is locally compact. Then $\omega \gtq \mathbf{k}(M)$. But $\omega  \not\gtq (\omom,\le_\infty)$ (Lemma~\ref{l:add_cof_omom_infty}), so $\mathbf{k}(M) \not\gtq (\omom,\le_\infty)$.

Now suppose $M$ is not locally compact. Then $\mathbb{F}$ embeds as a closed subset in $M$, and hence $\mathbf{k}(M) \gtq \mathbf{k}(\mathbb{F})$.
So it suffices to show $\mathbf{k}(\mathbb{F}) \gtq (\omega^\omega,\le_\infty)$.
Define $\phi_+$ by $\phi_+(K)(n)= \max \{m : (n,m) \in K\}$ (where $\max \emptyset=0$) and $ \psi_-(f)=\{(n,f(n)) : n \in \omega\}$. Note these are well defined.
Take any $f$ in $\omega^\omega$. Suppose $ \psi_-(f)\nci K$. Then there is an infinite subset $A$ of $\omega$ such that for all $n$ in $A$, $(n,f(n)) \in K$. But if $n$ is from $A$, and so $(n,f(n)) \in K$, then $f(n) \le \phi_+(K)(n)$. Hence $f \le_\infty \phi_+(K)$, as required for $( \psi_-,\phi_+)$ to be a morphism.

\smallskip

\noindent {\textbf{For (4):}} Suppose, first, that  $M^\#$ is compact. As $\mathop{LC}(M)$ is always $\sigma$-compact, and $M$ is the union of (compact) $M^\#$ and $\mathop{LC}(M)$, we see  that $M$ is $\sigma$-compact, and hence  $M'$ is $\sigma$-compact. So  by  Theorem~\ref{th:ub_k}(3) (the reverse direction, for which a self-contained proof is given below) $(\omom,\le_\infty)\with  \omega \gtq \mathbf{k}(M)$. But by Lemma~\ref{l:with_vs_and},  $(\omom,\le_\infty)\with  \omega \not\gtq (\omom,\le_\infty) \times \omega$. Hence $\mathbf{k}(M) \not\gtq (\omom,\le_\infty) \times \omega$, as needed for the forward direction of (4).

To complete the argument for (4), suppose
 $M^\#$ is not compact. Then we can embed $\mathbb{F} \times \omega$ as a closed subspace in $M$. So it suffices to show $\mathbf{k}(\mathbb{F} \times \omega) \gtq (\omom,\le_\infty) \times \omega$. Define $\phi_+:\K(\mathbb{F}\times \omega) \to \omom \times \omega$ as follows. Take any compact subset $K$ of $\mathbb{F}\times \omega$. Let $N_K$ be the largest $n$ such that $K$ meets $\mathbb{F} \times \{n\})$. Define $f_K(m)$ to be the largest $p$ over all $n \le N_K$ such that $(m,p,n)$ is in $K \cap \left(\mathbb{F} \times \omega\right)$. Set $\phi_+(K)=(f_K,N_K)$, and note this is well-defined. Define $ \psi_-:\omom \times \omega \to \mathop{CS}(\mathbb{F} \times \omega)$ by $ \psi_-(f,n)=\{(m,f(m),n) : m \in \omega\}$, in other words $\mathop{Gr}(f) \times \{n\}$, where $\mathop{Gr}(f)$ is the graph of $f$.

Suppose that $ \psi_-(f,n) \nci K$. Then $ \psi_-(f,n) \cap K$ is infinite. So there is an infinite $A\subseteq \omega$ such that $\{(m,f(m),n) : m \in A\} \subseteq K$. It follows that $n \le N_K$, and for each $m$ in $A$ that $f(m) \le f_K(m)$, which means $f \le_\infty f_K$. Hence $(f,n) \left(\le_\infty \times \le\right) \phi_+(K)$, as required for $( \psi_-,\phi_+)$ to be a morphism.
\end{proof}

\begin{thm}\label{th:ub_k} Let $M$ be separable metrizable.

(1) $\omega \gtq \mathbf{k}(M)$ $\iff$ $M$ locally compact $\iff$ $M^\#=\emptyset$.

(2) $(\omom,\le_\infty) \gtq \mathbf{k}(M)$ $\iff$ $M'$ compact or empty.

(3) $(\omom,\le_\infty)\with  \omega \gtq \mathbf{k}(M)$ $\iff$ $M'$ is $\sigma$-compact or empty and $M^\#$ compact or empty.

(4) $(\omom,\le_\infty)\times \omega \gtq \mathbf{k}(M)$ $\iff$ $M'$ $\sigma$-compact or empty.
\end{thm}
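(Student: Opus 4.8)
The plan is to prove each biconditional by matching it against the lower bounds of Theorem~\ref{th:lb_k}: the ``only if'' (forward) directions follow by combining those lower bounds with a small stock of incomparabilities, while the ``if'' (backward) directions require explicit Tukey morphisms generalizing the computation $\mathbf{k}(\mathbb{F}) \gte (\omom,\le_\infty)$.

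\smallskip
\noindent\textbf{Forward directions (1)--(3).} For (1), if $M^\# \ne \emptyset$ then $\mathbf{k}(M) \gtq (\omom,\le_\infty)$ by Theorem~\ref{th:lb_k}(2); since $\cof(\omega)=\aleph_0 < \mathfrak{b}=\cof(\omom,\le_\infty)$ we have $\omega \not\gtq (\omom,\le_\infty)$, hence $\omega \not\gtq \mathbf{k}(M)$. For (2), if $M'$ is not compact then $\mathbf{k}(M) \gtq \omega$ by Theorem~\ref{th:lb_k}(1); since $\add(\omom,\le_\infty)=\mathfrak{d} > \aleph_0 = \add(\omega)$ (Lemma~\ref{l:add_cof_omom_infty}) we have $(\omom,\le_\infty) \not\gtq \omega$, hence $(\omom,\le_\infty) \not\gtq \mathbf{k}(M)$. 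For (3) I would first record the topological fact that $\mathop{LC}(M)$, being an open, hence Lindel\"of, locally compact subspace, is $\sigma$-compact; thus $M^\#$ compact gives $M=\mathop{LC}(M)\cup M^\#$ $\sigma$-compact, so $M'$ is $\sigma$-compact. Consequently the negation of the right side of (3) reduces to ``$M^\#$ not compact'', whereupon $\mathbf{k}(M) \gtq (\omom,\le_\infty)\times\omega$ by Theorem~\ref{th:lb_k}(4), while $(\omom,\le_\infty)\with\omega \not\gtq (\omom,\le_\infty)\times\omega$ by Lemma~\ref{l:with_vs_and}; so $(\omom,\le_\infty)\with\omega \not\gtq \mathbf{k}(M)$.

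\smallskip
\noindent\textbf{Forward direction (4).} This is the subtle one, and I would isolate a property preserved downward under $\gtq$. Call $\mathbf{A}$ \emph{$\sigma$-narrow} if $A_-=\bigcup_n A_n$ where every countable subset of each $A_n$ is bounded in $\mathbf{A}$. Writing $(\omom,\le_\infty)\times\omega=\bigcup_n(\omom\times\{n\})$ and using that $(\omom,\le_\infty)$ is countably directed shows $(\omom,\le_\infty)\times\omega$ is $\sigma$-narrow. If $\mathbf{A}\gtq\mathbf{B}$ via $(\psi_-,\phi_+)$, then for countable $S\subseteq\psi_-^{-1}(A_n)$ the set $\psi_-(S)\subseteq A_n$ is bounded in $\mathbf{A}$, so $S$ is bounded in $\mathbf{B}$ (the contrapositive of $\psi_-$ carrying unbounded sets to unbounded sets); thus $\{\psi_-^{-1}(A_n)\}_n$ witnesses that $\mathbf{B}$ is $\sigma$-narrow. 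Finally I would show that $\sigma$-narrowness of $\mathbf{k}(M)$ forces $M'$ $\sigma$-compact: given a witnessing cover $\{\mathcal{S}_n\}$ of $\mathop{CS}(M)$ and a choice $S_x\to x$ for each $x\in M'$, set $L_n=\{x: S_x\in\mathcal{S}_n\}$, so $M'=\bigcup_n L_n$; a closed discrete sequence $\{x_k\}\subseteq L_n$ would make $\{S_{x_k}\}\subseteq\mathcal{S}_n$ a countable bounded family, whence each $S_{x_k}\nci K$ for one compact $K$, forcing all $x_k=\lim S_{x_k}\in K$, impossible. So each $\overline{L_n}$ is compact and $M'$ is $\sigma$-compact. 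Hence $(\omom,\le_\infty)\times\omega\gtq\mathbf{k}(M)$ implies $M'$ $\sigma$-compact.

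\smallskip
\noindent\textbf{Backward directions.} These I would prove by direct constructions modelled on the metric fan. For (2), when $M'$ is compact every set $M'\cup D$ with $D$ a relatively compact set of isolated points is compact, and any sequence lying mod-finite in $M'$ is automatically non-closed in each such set; so only sequences through isolated points must be captured, and organizing the isolated points by their approach to the compact $M'$ reduces matters to the equivalence $\left(\left(([\omega]^{<\omega})^\omega\right)_\infty,i_\infty\right) \gte (\omom,\le_\infty)$ of Lemma~\ref{l:small_Pk}(2). For (3) and (4), with $M'$ $\sigma$-compact I would fix an increasing exhaustion $M'=\bigcup_n C_n$ by compacta and a metric-compactification base $\mathcal{B}$ as in Theorem~\ref{th:kc^om_above_k}, using $\mathcal{B}$ to code the ``fan directions'' of non-local-compactness by a member of $\omom$ under $\le_\infty$, and the least $n$ with $\lim S\in C_n$ for the $\omega$-coordinate. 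If $M^\#$ is compact the directions all concentrate on one compact set and need not be coordinated with the level, giving the $\with$-combination of (3); if $M^\#$ is merely $\sigma$-compact the directions vary across levels and must be coordinated, giving the $\times$-combination of (4). The degenerate cases ($M$ discrete or compact) give $\mathbf{k}(M)\in\{\mathbf{0},\mathbf{1}\}$ and are immediate.

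\smallskip
\noindent\textbf{Main obstacle.} The crux is the backward family, and precisely getting the combining operation right: because $\with$ and $\times$ are genuinely different (Lemma~\ref{l:with_vs_and}), the morphism for (3) must exploit compactness of $M^\#$ to decouple the fan-direction coordinate from the level coordinate, whereas in (4) the two coordinates are irreducibly linked. A secondary constraint is that the backward direction of (3) must be established self-containedly, since it is invoked in the forward direction of Theorem~\ref{th:lb_k}(4); I must therefore build the $(\omom,\le_\infty)\with\omega$ morphism directly from the exhaustion of $M'$ and the base $\mathcal{B}$, without routing through any lower bound.
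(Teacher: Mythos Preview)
Your forward directions for (1)--(3) match the paper's, with (3) streamlined by your observation that $M^\#$ compact forces $M'$ $\sigma$-compact, collapsing the paper's two cases to one. Your forward direction for (4) is genuinely different: the paper routes through $\mathbf{kc}(M)$ via Lemma~\ref{l:above_kc} (requiring the side case ``$M'$ compact''), obtains a lower morphism $\psi_-:M\to\omom\times\omega$, and then uses countable dense subsets of the slices $\psi_-^{-1}(\omom\times\{n\})$ together with countable directedness of $(\omom,\le_\infty)$ to exhibit $M$ as $\sigma$-compact. Your $\sigma$-narrow invariant packages the same phenomenon as a Tukey-downward-closed property and reads $\sigma$-compactness of $M'$ directly off the induced decomposition of $\mathop{CS}(M)$, bypassing $\mathbf{kc}$ and the case split entirely. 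Both arguments are correct; yours is more self-contained and makes the role of countable directedness more visible.

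For the backward directions your outline diverges from the paper's mechanics. The paper does not use the compactification base of Theorem~\ref{th:kc^om_above_k}. For (2) it fixes a decreasing local base $(C_n)_n$ around the compact $M'$ and partitions $I(M)$ into the shells $E_n=C_n\setminus C_{n+1}$, building $\phi_+$ and $\psi_-$ explicitly. For (4) it first notes that $I(M)$ is countable in a separable metrizable space, so $M'$ $\sigma$-compact gives $M$ $\sigma$-compact; it then exhausts $M=\bigcup_n L_n$ by compacta and nests a local base $(B^n_m)_m$ around each $L_n$, with the $\omega$-coordinate recording which $L_n$ contains the limit and the $\omom$-coordinate recording how deep in the shells the sequence lives. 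For (3) it exploits compactness of $M^\#$ to use a \emph{single} shell system around $M^\#$ together with an exhaustion $L_n=M^\#\cup\overline{U_n}$; this is precisely what decouples the two coordinates and yields $\with$ rather than $\times$. Your diagnosis of why (3) gives $\with$ and (4) gives $\times$ is exactly right, but the explicit $\phi_+(f,n)$ must be written down and its compactness verified---that is where the work is. You also omit the backward direction of (1), though it is immediate from hemicompactness of locally compact Lindel\"of spaces.
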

\begin{proof} Claim (1) is clear because $\omega \gtq \mathbf{k}(M)$ if and only if $\mathbf{k}(M)$ has countable cofinality, which means it is hemicompact, and a first countable space is hemicompact if and only if it is Lindel\"{o}f and locally compact.

\noindent {\textbf{For (2):}}
For the forward direction of (2), suppose $M'$ is not compact (so not empty). Then $M$ is not compact, and $\mathbf{kc}(M) \gtq \omega$. By Lemma~\ref{l:above_kc} $\mathbf{k}(M) \gtq \omega$. But $(\omom,\le_\infty) \not\gtq \omega$ (Lemma~\ref{l:add_cof_omom_infty}).

For the reverse direction, suppose $M'$ is compact or empty. We show $(\omega^\omega,\le_\infty) \gtq \mathbf{k}(M)$. Indeed if $M'$ is empty then $\mathbf{k}(M)$ is trivial, and there is nothing to do. So suppose $M'$ is compact. If  $M$ were locally compact then it would be the disjoint sum of a compact subspace and a discrete  subspace, and certainly $(\omega^\omega,\le_\infty) \gtq \mathbf{1} \gte \mathbf{k}(M)$. So we suppose $M$ not locally compact.
Now we can find a strictly decreasing local base around $M'$, say $(C_n)_{n \in \omega}$, with $C_0=M$ and such that (using $M$ not locally compact) for each $n$ the set $E_n = C_n\setminus C_{n+1}$ is infinite, closed and discrete. For each $n$ fix a bijection $e_n : \omega \to E_n$.

Define $\phi_+(f)= M' \cup \bigcup_{n \in \omega} e_n( [0,\max \{f(0),\ldots,f(n)\}])$.
Note that $\phi_+(f)$ is compact and so $\phi_+$ maps from $\omega^\omega$ into $\K(M)$. Now we define $ \psi_-:\mathop{CS}(M) \to \omega^\omega$. Take any
 $S$ in $\mathop{CS}(\mathbb{F})$. Note that the limit of $S$ is in $M'$ (all other points being isolated).
 If $S \cap M'$ is infinite then set $ \psi_-(S)$ to be anything in $\omega^\omega$. Otherwise,
for each $n \in \omega$, define $n^+$ to be the first element of $\omega$ which is greater than or equal to $n$ and $E_{n+} \cap S \ne \emptyset$, and define $ \psi_-(S)(n) = \max \{m : e_{n^+}(m) \in E_{n^+} \cap S\}$. Note that, as $E_{n^+}$ is closed discrete while $S$ is a sequence converging to some point in $M'$, the set $E_{n^+} \cap S$ is finite, and $ \psi_-$ is well defined.

Take any $S$ in $\mathop{CS}(\mathbb{F})$. Suppose $ \psi_-(S) \le_\infty f$. So there is an infinite $A$ such that for every $n$ from $A$ we have $ \psi_-(S)(n) \le f(n)$. If $S \cap M'$ is infinite then, as $M' \subseteq \phi_+(g)$ for every $g$, we have $S\nci\phi_+(f)$. Otherwise,
 for every $n$ in $A$ we have $E_{n^+} \cap S \ne \emptyset$.
 Take any $n$ in $A$. Then
  $\max \{m : e_{n^+}(m) \in E_{n^+} \cap S\}= \psi_-(S)(n) \le f(n) \le \max \{f(0),\ldots,f(n^+)\}$. Thus $e_{n^+}( \psi_-(S)(n)) \in E_{n^+} \cap S \cap \phi_+(f)$.  Hence $S\nci \phi_+(f)$, as required for $( \psi_-,\phi_+)$ to be a morphism.

\medskip

\noindent {\textbf{For (4):}} For the forward direction of (4), suppose $(\omom,\le_\infty)\times \omega \gtq \mathbf{k}(M)$.
    If $M'$ is compact or empty then we are done.   Otherwise, by Lemma~\ref{l:above_kc}, we know $\mathbf{k}(M) \gtq \mathbf{kc}(M)$.
    Fix a lower morphism $ \psi_- : M \to \omega^\omega \times \omega$, witnessing $(\omega^\omega,\le_\infty) \times \omega \gtq \mathbf{kc}(M)$.

    Fix, for the moment, $n \in \omega$. Let $M_n= \psi_-^{-1} \left(\omega^\omega \times \{n\}\right)$ and pick a countable dense subset $D_n$ of $M_n$. Note that $\omega^\omega \times \{n\}$ under $\le_\infty$ has additivity $\mathfrak{d} \ge \omega_1$, so $ \psi_-(D_n)$ is bounded. Thus $D_n$ is bounded in $\mathbf{kc}(M)=(M,\K(M),\in)$, which means its closure, $K_n$ say, is compact.
    Since $M=\bigcup_n M_n$ we see that $M=\bigcup_n K_n$ is  $\sigma$-compact, as claimed.

For the reverse direction, suppose $M'$ is $\sigma$-compact or empty. In the latter case $\mathbf{k}(M)$ is trivial, and there is nothing to do. So suppose $M'$ is $\sigma$-compact. If $M$ is locally compact then $(\omom,\le_\infty)\times \omega \gtq \omega \gtq \mathbf{k}(M)$, by (1). So suppose $M$ is not locally compact.
Write $M=\bigcup_n L_n$, an increasing union of compact sets. Fix $n$. Pick a local base for $L_n$ in $M$, say $(B_m^n)_m$ such that $L_n \subseteq B^n_{m+1}\subseteq \cl{B^n_{m+1}} \subseteq B^n_m$.
Let $C^n_m=\cl{B^n_{m}}\setminus B^n_{m+1}$. Note $C^n_m$ is closed.

Define $\phi_+:\omom \times \omega \to \K(M)$ by
\[ \phi_+(f,n) = L_n \cup \bigcup \left\{  L_q \cap C^{n'}_p : p \in \omega, n' \le n \, \& \, q \le \max \{f(0),\ldots, f(p)\} \right\}.\]
Note that $\phi_+(f,n)$ is compact.
Define $ \psi_-:\mathop{CS}(M) \to \omom \times \omega$ as follows. Take any $S$ from $\mathop{CS}(M)$. Then $S$ converges to some point $x_S$, and fix $n_S$ to be the first $n$ such that $x_S$ is in $L_n$. If $S \cap L_{n_S}$ is infinite then set $ \psi_-(S)=(f,n_S)$ where $f$ is arbitrary. Otherwise, note that $S \cap C_m^{n_S}$ is finite for all $m$, and non-empty for infinitely many $m$. For each $m$ let $m^+$ be the first element of $\omega$ greater than or equal to $m$ such that  $S \cap C_{m^+}^{n_S} \ne \emptyset$.
Define $f_S(m)$ to be least such that $S \cap C_{m^+}^{n_S}$ is contained in $L_{f_S(m)}$.
Set $ \psi_-(S)=(f_S,n_S)$.

Suppose $ \psi_-(S) (\le_\infty \times \le) (g,t)$.
We need to show $S \nci \phi_+(g,t)$, or equivalently, $S \cap \phi_+(g,t)$ is infinite.
Write $ \psi_-(S)=(f_S,n_S)$ as in the definition.
We know $n_S \le t$, and that there is an infinite subset $A_S$ of $\omega$ such that $f_S(m) \le g(m)$ for all $m$ in $A_S$.
As in the definition of $ \psi_-(S)$ there are two cases. If $S \cap L_{n_S}$ is infinite then $S \cap \phi_+(g,t) \supseteq S \cap L_t \supseteq S \cap L_{n_S}$, and we are done in this case.
Otherwise, for each $m$ in $A_S$, $S \cap C_{m^+}^{n_S} \ne \emptyset$. So, for each $m$ in $A_S$, $\emptyset \ne S \cap C_{m^+}^{n_S} \subseteq L_{f_S(m)} \cap C_{m^+}^{n_S}  \subseteq \phi_+(g,t)$, taking $p=m^+\ge m$, $n'=n_S \le t$ and $q=f_S(m) \le g(m) \le \max \{g(0),\ldots,g(m^+)\}$ in the definition of $\phi_+(g,t)$. So $S \cap \phi_+(g,t)$ is indeed infinite in this case.
  limit

\medskip

\noindent {\textbf{For (3):}}
Towards the forward direction, suppose either $M'$ is not ($\sigma$-compact or empty) or $M^\#$ is not (compact or empty). In the first case, from (4) we know $(\omom,\le_\infty)\times \omega \not\gtq \mathbf{k}(M)$. Since $(\omom,\le_\infty)\times \omega \gtq (\omom,\le_\infty)\with \omega$, we see $(\omom,\le_\infty)\with \omega \not\gtq \mathbf{k}(M)$.
 Otherwise from Theorem~\ref{th:lb_k}(4) (the reverse direction, for which a self-contained argument is given above) we know $\mathbf{k}(M) \gtq (\omom,\le_\infty)\times\omega$. And recall, $(\omom,\le_\infty)\times \omega \gtq (\omom,\le_\infty)\with \omega$.

Finally, suppose $M^\#$ is compact. We define $\phi_+:\omom \times \omega \to \K(M)$ and $ \psi_-:\mathop{CS}(M) \to \omom \oplus \omega$ and verify $( \psi_-,\phi_+)$ is a morphism from $(\omom,\le_\infty) \with \omega$ to $\mathbf{k}(M)$.
Write $\mathop{LC}(M)=\bigcup_n U_n$ as a union of open sets with compact closure, where $\cl{U_n} \subseteq U_{n+1}$.
Write $M=\bigcup_n L_n$, where $L_n=M^\# \cup \overline{U_n}$. Pick a local base for $M^\#$ in $M$, say $(B_m)_m$ such that $B_{m+1}\subseteq \cl{B_{m+1}} \subseteq B_m$.
Let $C_m=\cl{B_{m}}\setminus B_{m+1}$. Note $C_m$ is closed.

Define $\phi_+:\omom \times \omega \to \K(M)$ by
\[ \phi_+(f,n) = L_n \cup \bigcup \left\{  L_q \cap C_p : p \in \omega \ \& \ q \le \max \{f(0),\ldots, f(p)\} \right\}.\]
Note that $\phi_+(f,n)$ is compact.
Define $ \psi_-:\mathop{CS}(M) \to \omom \oplus \omega$ as follows. Take any $S$ from $\mathop{CS}(M)$. Then $S$ converges to some point $x_S$. If $x_s \notin M^\#$ then fix $ \psi_-(S)$ to be the first $n$ such that $x_S$ is in $U_n$.
Suppose $x_S$ is in $M^\#$.
If $S \cap M^\#$ is infinite then set $ \psi_-(S)=f_S$ where $f_S$ is arbitrary. Otherwise, note that $S \cap C_m$ is finite for all $m$, and non-empty for infinitely many $m$. For each $m$ let $m^+$ be the first element of $\omega$ greater than or equal to $m$ such that  $S \cap C_{m^+} \ne \emptyset$.
Define $f_S(m)$ to be least such that $S \cap C_{m^+}$ is contained in $L_{f_S(m)}$.
Set $ \psi_-(S)=f_S$.

Suppose $ \psi_-(S) (\le_\infty \with \le) (g,t)$.
We need to show $S \nci \phi_+(g,t)$, or equivalently, $S \cap \phi_+(g,t)$ is infinite.
There are three cases. In the first $x_S$ is not in $M^\#$, and so $n= \psi_-(S) \le t$ is in $\omega$, and $x_S \in U_n$. As $S$ converges to $x_S$, all but finitely many terms of $S$ are in $U_n \subseteq L_n  \subseteq L_t \subseteq \phi_+(g,t)$, and we are done in this case.
In the second and third cases $x_S$ is in $M^\#$, and so $f_S= \psi_-(S)$ is in $\omom$.
In the second case $S \cap M^\#$ is infinite, and $S \cap M^\# \subseteq M^\# \subseteq L_t \subseteq \phi_+(g,t)$, and done.
In the final case, from $f_S \le_\infty g$
there is an infinite subset $A_S$ of $\omega$ such that $f_S(m) \le g(m)$ for all $m$ in $A_S$.
From the definition of $ \psi_-(S)$ in this case,  for each $m$ in $A_S$, $S \cap C_{m^+} \ne \emptyset$. So, for each $m$ in $A_S$, $\emptyset \ne S \cap C_{m^+} \subseteq L_{f_S(m)} \cap C_{m^+}  \subseteq \phi_+(g,t)$, taking $p=m^+\ge m$ and $q=f_S(m) \le g(m) \le \max \{g(0),\ldots,g(m^+)\}$ in the definition of $\phi_+(g,t)$. So $S \cap \phi_+(g,t)$ is indeed infinite in this case.
\end{proof}

\begin{thm}\label{th:analytic} Let $M$ be separable metrizable.

(1) The following are equivalent:

\quad (i) $M$ is analytic, (ii)  $\omom \gtq \mathbf{kc}(M)$, and (iii) $\omom \gtq \mathbf{k}(M)$.

(2)    If $M$ is analytic and not $\sigma$-compact then $\mathbf{k}(M) \gte \omom \gte \mathbf{kc}(M)$.
\end{thm}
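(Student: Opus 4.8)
The theorem has two parts, and part (2) is the real payoff, so let me plan both but focus energy on part (2). For part (1), I would establish the cycle of implications (i)$\Rightarrow$(ii)$\Rightarrow$(iii)$\Rightarrow$(i). The equivalence (i)$\Leftrightarrow$(ii) is essentially classical descriptive set theory repackaged in the relational language: an analytic space $M$ is a continuous image of $\omom$, and this continuous surjection should induce a Tukey morphism witnessing $\omom \gtq \mathbf{kc}(M)$, since the compact subsets of $M$ are covered by images of compact (hence $\le^*$-bounded) subsets of $\omom$. Conversely, $\omom \gtq \mathbf{kc}(M)$ says the compact cover structure of $M$ is dominated by $\omom$, which by the theory of $\mathK$-analytic/analytic spaces forces $M$ to be analytic. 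For (ii)$\Rightarrow$(iii), I would invoke Theorem~\ref{th:kc^om_above_k}: since $\mathbf{kc}(M)^\omega \gtq \mathbf{k}(M)$, and $\omom \gtq \mathbf{kc}(M)$ implies $\omom \cong (\omom)^\omega \gtq \mathbf{kc}(M)^\omega$ (using that $\omom$ is Tukey-equivalent to its own countable power), we chain to get $\omom \gtq \mathbf{k}(M)$. Finally (iii)$\Rightarrow$(ii) follows because $\mathbf{k}(M) \gtq \mathbf{kc}(M)$ whenever $M'$ is not compact (Lemma~\ref{l:above_kc}), and one handles the locally compact / $M'$-compact case separately where everything is small and analyticity is automatic.

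For part (2), the goal is the two-sided inequality $\mathbf{k}(M) \gte \omom \gte \mathbf{kc}(M)$. Assume $M$ is analytic and \emph{not} $\sigma$-compact. First I would record the ``$\gtq$'' directions that already come for free from part (1): since $M$ is analytic, $\omom \gtq \mathbf{kc}(M)$ and $\omom \gtq \mathbf{k}(M)$. So the whole content of part (2) is the \emph{reverse} domination, namely $\mathbf{kc}(M) \gtq \omom$ and $\mathbf{k}(M) \gtq \omom$. The second of these is the stronger statement, and in fact it suffices to prove $\mathbf{kc}(M) \gtq \omom$: combined with Lemma~\ref{l:above_kc} (applicable since $M$ not $\sigma$-compact forces $M'$ not compact) we would get $\mathbf{k}(M) \gtq \mathbf{kc}(M) \gtq \omom$, and pairing with the free direction $\omom \gtq \mathbf{k}(M)$ yields $\mathbf{k}(M) \gte \omom$, while $\mathbf{kc}(M) \gtq \omom$ together with the free $\omom \gtq \mathbf{kc}(M)$ gives $\mathbf{kc}(M) \gte \omom$.

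So the crux reduces to: \textbf{an analytic, non-$\sigma$-compact separable metrizable $M$ satisfies $\mathbf{kc}(M) \gtq \omom$.} This is where I expect the main obstacle to lie. The standard tool is the \emph{Hurewicz dichotomy}: an analytic space that is not $\sigma$-compact contains a closed copy of $\omom$ (equivalently, a closed copy of the irrationals that is ``nowhere $\sigma$-compact''). Granting such a closed embedding $\omom \hookrightarrow M$, closedness gives $\mathbf{kc}(M) \gtq \mathbf{kc}(\omom)$ by the monotonicity of $\mathbf{kc}$ under closed subspaces (the analogue of the closed-subspace clause in the $\mathbf{k}$-structure lemma, applied to compact covers). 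It then remains to verify $\mathbf{kc}(\omom) \gtq \omom$, i.e. that the compact-cover relation of the irrationals Tukey-dominates $(\omom,\le)$: the upper morphism sends a compact $K \subseteq \omom$ to the function $n \mapsto \max\{f(n) : f \in K\}$ (finite since $K$ is compact, hence bounded coordinatewise), and one checks this carries cofinal compact families to dominating families, which is exactly the classical fact $\cof(\K(\omom)) = \mathfrak{d}$ refined to the Tukey level. The delicate point to get right is the invocation of the Hurewicz theorem and the passage through the closed copy of $\omom$; everything downstream is routine. I would therefore structure the proof as: (a) cite/establish the Hurewicz closed-$\omom$ embedding, (b) push down via the closed-subspace monotonicity of $\mathbf{kc}$, (c) compute $\mathbf{kc}(\omom) \gtq \omom$ directly, then (d) assemble the four inequalities into the claimed bimorphisms.
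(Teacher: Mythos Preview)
Your proposal is correct and follows essentially the same architecture as the paper's proof: for part~(1) you use the cycle through Christensen's characterization (i)$\Leftrightarrow$(ii), Theorem~\ref{th:kc^om_above_k} together with $(\omom)^\omega \te \omom$ for (ii)$\Rightarrow$(iii), and Lemma~\ref{l:above_kc} for (iii)$\Rightarrow$(ii); for part~(2) you reduce to $\mathbf{kc}(M) \gtq \omom$ and then chain up via Lemma~\ref{l:above_kc}. The only difference is one of citation versus direct argument: where the paper simply cites Christensen for (i)$\Leftrightarrow$(ii) and \cite{FG-ShapeCptCovers} for $\mathbf{kc}(M) \gtq \omom$, you sketch the underlying proofs (continuous image of $\omom$ for the forward Christensen direction, and the Hurewicz dichotomy producing a closed copy of $\omom$ for the key step in part~(2)). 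Your route through Hurewicz is exactly the standard way to establish the cited fact, so this is not a genuinely different approach, just a self-contained expansion of what the paper outsources.
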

\begin{proof}
We show claim (1). The equivalence of (i) and (ii) is due to Christensen. Since $\mathbf{k}(M) \gtq \mathbf{kc}(M)$, except for $M$ with $M'$ compact,  (iii) immediately implies (ii). If (ii) holds, and $\omom \gtq \mathbf{kc}(M)$  then Theorem~\ref{th:kc^om_above_k} says that $(\omega^\omega)^\omega \gtq \mathbf{k}(M)$. Since $(\omega^\omega)^\omega \te \omega^\omega$, (iii) follows.

For claim (2), suppose $M$ is analytic but not $\sigma$-compact, then (a) $M'$ is not compact, and so by Lemma~\ref{l:above_kc} we have $\mathbf{k}(M)\gtq \mathbf{kc}(M)$, and  (b) we know from \cite{FG-ShapeCptCovers} that  $\mathbf{kc}(M) \gtq \omom$. Apply claim (1) to complete the argument.
\end{proof}

A space $X$ is \emph{Menger} if for every sequenece, $(\mathcal{U}_n)_n$, of open covers of $X$ we can find, for each $n$, finite subcollections, $\mathcal{V}_n$ of $\mathcal{U}_n$, such that $\bigcup_n \mathcal{V}_n$ covers $X$. Every $\sigma$-compact space is Menger, but there exist, in ZFC, separable metrizable Menger spaces which are not $\sigma$-compact.
If $M$ is separable metrizable and $|M| < \mathfrak{d}$ then it is Menger.

\begin{prop}\label{pr:menger} \

(1) If $M$ is not Menger then $\mathbf{k}(M) \gtq \omom$.

(2) Assuming $\mathfrak{b} < \mathfrak{d}$,  every separable metrizable space, $M$, with $|M|=\omega_1$ has $\mathbf{K}(M)$, in the Tukey order, strictly above $(\omom,\le_\infty) \times \omega$ but incomparable with $\omom$.
\end{prop}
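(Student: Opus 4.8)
The plan is to treat the two parts separately, reducing each to results already in hand and isolating the one genuinely new ingredient.

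For part (1) I would first pass from $\mathbf{k}(M)$ down to $\mathbf{kc}(M)$. If $M$ is not Menger then it is not $\sigma$-compact, and since $I(M)$ is countable (hence $\sigma$-compact) while $M=M'\cup I(M)$, the set $M'$ cannot be compact; so Lemma~\ref{l:above_kc} gives $\mathbf{k}(M)\gtq\mathbf{kc}(M)$, and it suffices to show $\mathbf{kc}(M)\gtq\omom$. Here I would invoke the Hurewicz-type characterisation of the Menger property: $M$ fails to be Menger exactly when some continuous $f\colon M\to\omom$ has dominating image. Given such an $f$, set $\phi_+(K)(n)=\max\{h(n):h\in f(K)\}$ (finite by compactness of $f(K)$) and let $\psi_-(y)$ be a point $x$ with $f(x)$ dominating $y$; then $\psi_-(y)\in K$ forces $y\le\phi_+(K)$, which is the morphism. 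The one delicate point is matching the \emph{eventual} domination supplied by Hurewicz against the \emph{pointwise} order of $\omom$; this is precisely the companion fact $\mathbf{kc}(M)\gtq\omom\Leftrightarrow M$ not Menger of~\cite{FG-ShapeCptCovers}, which I would cite to close the gap and then compose.

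For part (2) I would first record the consequences of $\mathfrak{b}<\mathfrak{d}$: since $\mathfrak{d}>\mathfrak{b}\ge\omega_1$ we have $\mathfrak{d}\ge\omega_2$ and $\mathfrak{c}\ge\mathfrak{d}>\omega_1$, so $|M|=\omega_1<\mathfrak{d}$ and $M$ is Menger. As $\mathfrak{c}>\omega_1=|M|$, every compact metrisable subset of $M$, being countable or of size $\mathfrak{c}$, is countable; hence $M$ is neither $\sigma$-compact (a countable union of countable sets is countable) nor analytic (an uncountable analytic space would contain an uncountable compact set). Consequently $M'$ is not $\sigma$-compact and $M^\#$ is not compact. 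Now three of the four required comparisons follow from the structure theorems: $\mathbf{k}(M)\gtq(\omom,\le_\infty)\times\omega$ by Theorem~\ref{th:lb_k}(4) (as $M^\#$ is not compact); $(\omom,\le_\infty)\times\omega\not\gtq\mathbf{k}(M)$ by Theorem~\ref{th:ub_k}(4) (as $M'$ is not $\sigma$-compact), giving strict dominance; and $\omom\not\gtq\mathbf{k}(M)$ by Theorem~\ref{th:analytic}(1) (as $M$ is not analytic).

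The crux, and the main obstacle, is the remaining comparison $\mathbf{k}(M)\not\gtq\omom$. Since $\cof(\omom)=\mathfrak{d}$, it suffices to prove $\cof(\mathbf{k}(M))<\mathfrak{d}$; the lower bound $\cof(\mathbf{k}(M))\ge\mathfrak{b}$ is already available from the previous paragraph (via Lemma~\ref{l:add_cof_omom_infty}(1)), so I would aim for the matching upper bound $\cof(\mathbf{k}(M))\le\mathfrak{b}$, that is, a compact generating family of size $\mathfrak{b}$. The idea is to exploit $|M|=\omega_1\le\mathfrak{b}$: there are at most $\mathfrak{b}$ possible limits, and at each limit the local obstruction is a metric fan, catalogued by a $\le_\infty$-cofinal family $\{g_\beta:\beta<\mathfrak{b}\}$ of size $\cof(\omom,\le_\infty)=\mathfrak{b}$ encoding the rate of approach; pairing limits with rates yields $\mathfrak{b}\cdot\mathfrak{b}=\mathfrak{b}$ candidate compacta, and a base trick on a metric compactification (as in the proof of Theorem~\ref{th:kc^om_above_k}) is used to guarantee each candidate is genuinely compact and that every convergent sequence meets one of them infinitely often. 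This would give $k(M)=\mathfrak{b}<\mathfrak{d}$, whence $\mathbf{k}(M)\not\gtq\omom$ and the incomparability is complete. The hard part is arranging the catching sets to be compact uniformly over all convergent sequences, including those whose terms wander through the non-locally-compact part of $M$; when $\mathfrak{b}>\omega_1$ this can be sidestepped by a calibre argument, since $\omom$ fails calibre $(\mathfrak{b},\mathfrak{b})$ (witnessed by a $\le^*$-increasing unbounded $\mathfrak{b}$-sequence) while $\mathbf{kc}(M)$ has it vacuously (as $|M|<\mathfrak{b}$), so by countable productivity of calibre $\mathfrak{b}$ together with $\mathbf{kc}(M)^\omega\gtq\mathbf{k}(M)$ (Theorem~\ref{th:kc^om_above_k}) the relation $\mathbf{k}(M)$ inherits calibre $(\mathfrak{b},\mathfrak{b})$, which $\omom$ lacks.
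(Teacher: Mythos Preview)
Your treatment of part~(1) is essentially the paper's: reduce to $\mathbf{kc}(M)\gtq\omom$ via Lemma~\ref{l:above_kc} and cite \cite{FG-ShapeCptCovers} for the Menger characterisation. For part~(2) you correctly extract three of the four needed comparisons from Theorems~\ref{th:lb_k}, \ref{th:ub_k} and~\ref{th:analytic}; your observation that $M$ is not analytic (hence $\omom\not\gtq\mathbf{k}(M)$) is in fact more explicit than the paper, which leaves that direction of incomparability implicit.

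The gap is in the crucial step $\mathbf{k}(M)\not\gtq\omom$. Your strategy---bound $\cof(\mathbf{k}(M))$ by $\mathfrak{b}<\mathfrak{d}=\cof(\omom)$---is exactly the paper's, but neither of your two proposed routes closes it. The direct construction asserts that ``at each limit the local obstruction is a metric fan,'' but a point of $M$ can have neighbourhood annuli $D_n=B_n\setminus B_{n+1}$ of size $\omega_1$, so the local sequential structure is $P(\omega_1)$ rather than $P(\omega)\gte(\omom,\le_\infty)$, and your $\mathfrak{b}$-many ``rates'' $g_\beta$ do not obviously catch every sequence converging there. The calibre alternative only addresses $\mathfrak{b}>\omega_1$, and even then the claimed countable productivity of calibre~$\mathfrak{b}$ for $\mathbf{kc}(M)^\omega$ is not justified: the domain $M^\omega$ has size $\omega_1^\omega\ge\mathfrak{c}\ge\mathfrak{b}$, and the standard thinning argument yields only a decreasing $\omega$-chain of $\mathfrak{b}$-sized index sets whose intersection need not have size $\mathfrak{b}$.

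The paper fills this gap by passing to $\mathbf{seq}(M)\gtq\mathbf{k}(M)$ and invoking the Representation Theorem~\ref{th:seq_rep}: for $|M|=\omega_1$ one has $P(\omega_1)\times(\omega_1,=)\gtq\mathbf{seq}(M)$. The decisive observation---missing from your sketch---is that every element $(F_n)_n$ of $P(\omega_1)$ has \emph{countable} support $\bigcup_n F_n\subseteq\alpha$ for some $\alpha<\omega_1$, so $P(\omega_1)$ is an increasing union over $\alpha<\omega_1$ of copies of $P(\omega)\gte(\omom,\le_\infty)$ (Lemma~\ref{l:small_Pk}), each of cofinality $\mathfrak{b}$; hence $\cof(P(\omega_1))\le\omega_1\cdot\mathfrak{b}=\mathfrak{b}$ and $\cof(\mathbf{k}(M))\le\cof(\mathbf{seq}(M))\le\mathfrak{b}$. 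This countable-support reduction is precisely what handles points with uncountable annuli, and would also repair your direct construction if made explicit.
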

\begin{proof}
     For (1) we recall from \cite{FG-ShapeCptCovers} that a separable metrizable space $M$ is Menger if and only if $\mathbf{kc}(M) \not\gtq \omom$, and apply Theorem~\ref{th:analytic} and Lemma~\ref{l:above_kc}.
    Hence if $M$ is not Menger then $\mathbf{k}(M) \gtq \mathbf{kc}(M) \gtq \omom$.

    Now, for (2), suppose $\mathfrak{b} < \mathfrak{d}$, and  $M$ is separable metrizable of size $\omega_1$. As $\omega_1 < \mathfrak{c}$ we see $M$ is not $\sigma$-compact (all its compact subsets are countable), and so $\mathbf{k}(M) >_T (\omom,\le_\infty) \times \omega$.
We show now that $\mathop{seq}(M)$, the cofinality of $\mathbf{seq}(M)$ is no more than $\mathfrak{b}$.
Since $\mathfrak{d} > \mathfrak{b} \ge \mathop{seq}(M) \ge k(M)$,
the cofinality of $\mathbf{k}(M)$;  for this $M$, we have $\mathbf{k}(M) \not\gtq \omom$.

  By Theorem~\ref{th:seq_rep}, as $M$ has size $\omega_1$, we know $P(\omega_1) \times (\omega_1,=) \tq \mathbf{seq}(M)$. So it suffices to show $P(\omega_1)$ has cofinality no more than $\mathfrak{b}$. To see this, first note that $P(\omega_1)=((([\omega_1]^{<\omega})^\omega)_\infty, i_\infty)$ is the union over all infinite $\alpha$ in $\omega_1$ of $((([\alpha]^{<\omega})^\omega)_\infty, i_\infty)$. Now observe that such $((([\alpha]^{<\omega})^\omega)_\infty, i_\infty)$ is isomorphic to $((([\omega]^{<\omega})^\omega)_\infty, i_\infty)$, which is, by Lemma~\ref{l:small_Pk}, Tukey equivalent to $(\omom,\le_\infty)$, and the latter has cofinality $\mathfrak{b}$.
\end{proof}

Recall: $\mathbf{0}=(\emptyset,\emptyset,\emptyset)$ (the relation with empty domain), $\mathbf{1}=(1,=)$, $\omega$ is the first infinite ordinal, $\omom$ has the product order, $f \le_\infty g$ if for infinitely many $n$, $f(n) \le g(n)$; $\with$ is the Tukey least upper bound, and $\times$ is the standard product. Recall: $M'$ is the set of non-isolated points of $M$, while $M^\#$ is the set of points of non-local compactness of $M$.

\begin{thm}[Initial Structure] \label{th:k_init} Below $M$ is separable metrizable. Under the  Tukey order the initial structure of $\mathbf{k}(\mathcal{M})$ is as follows.

\medskip

(0) $\mathbf{k}(M) \gte \mathbf{0}$ if and only if $M$ is discrete, or equivalently $M'=\emptyset$. This is the minimum  Tukey class.

\medskip

(1) Its immediate successor is the class of $\mathbf{1}$, and $\mathbf{k}(M) \gte \mathbf{1}$ if and only if $M'$ is compact, or equivalently, $M=K \oplus D$ where $K$ is compact and $D$ is discrete.

This class has two immediate successors, which are incomparable.

\medskip

(2a) $\omega$, where $\mathbf{k}(M) \gte \omega$ if and only if $M'$ is not compact but $M$ is locally compact (i.e. $M^\#=\emptyset$).

\medskip

(2b) $(\omom,\le_\infty)$, where $\mathbf{k}(M) \gte (\omom,\le_\infty)$ if and only if $M'$ is compact but $M^\# \ne \emptyset$ (i.e. $M$ not locally compact).

There is a  unique immediate successor of the previous two classes.

\medskip

(3) $(\omom,\le_\infty) \with \omega$, where $\mathbf{k}(M) \gte (\omom,\le_\infty) \with \omega$ if and only if $M^\#$ compact (this implies $M'$ $\sigma$-compact).

This class has a unique immediate successor.

\medskip

(4) $(\omom,\le_\infty) \times \omega$, where $\mathbf{k}(M) \gte (\omom,\le_\infty) \times \omega$ if and only if $M'$ is $\sigma$-compact but $M^\#$ is not compact.

\smallskip

The classes to this point exhaust all $M$ that are the disjoint sum of $\sigma$-compact and discrete. Class (4) has an immediate successor.

\medskip

(5) $\omom$, where $\mathbf{k}(M) \gte \omom$ if and only if $M$ is analytic but not $\sigma$-compact.

\smallskip

However, this is, consistently at least, not unique.

\medskip

(6) If $M$ is not Menger then $\mathbf{k}(M) \gtq \omom$. Consistently there are
 Menger sets $M$  such that $\mathbf{k}(M)$ lies strictly above  $(\omom,\le_\infty) \times \omega$ in the  Tukey order, but is incomparable with $\omom$.
\end{thm}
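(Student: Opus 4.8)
The plan is to treat this as a synthesis theorem: essentially all of the analytic content already lives in Theorems~\ref{th:lb_k}, \ref{th:ub_k}, \ref{th:analytic} and Proposition~\ref{pr:menger}, and the work is to (a) read off the exact Tukey type of each $\mathbf{k}(M)$, (b) fix the order among the finitely many ``model'' relations, and (c) check that consecutive model relations are covers in the realized poset $\mathbf{k}(\mathcal{M})$. For (a) I would use that, for each model relation $X$, one has $\mathbf{k}(M) \gte X$ exactly when $\mathbf{k}(M) \gtq X$ (from Theorem~\ref{th:lb_k}) and $X \gtq \mathbf{k}(M)$ (from Theorem~\ref{th:ub_k}); intersecting the two lists gives the precise conditions, which in the hierarchy read: $\mathbf{0}$ iff $M'=\emptyset$; $\mathbf{1}$ iff $M'$ is compact nonempty and $M^\#=\emptyset$ (when $M$ is locally compact a single compact neighbourhood of the compact set $M'$ is the sole generator); $\omega$ iff $M^\#=\emptyset$ and $M'$ not compact; $(\omom,\le_\infty)$ iff $M'$ compact and $M^\#\ne\emptyset$; $(\omom,\le_\infty)\with\omega$ iff $M^\#$ compact nonempty and $M'$ not compact; $(\omom,\le_\infty)\times\omega$ iff $M'$ is $\sigma$-compact and $M^\#$ not compact; and $\omom$ iff $M$ is analytic and not $\sigma$-compact (Theorem~\ref{th:analytic}(2)). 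Two auxiliary facts I would isolate and use repeatedly: $M^\#$ compact forces $M'$ to be $\sigma$-compact (since $M'\cap\mathop{LC}(M)$ is closed in the $\sigma$-compact space $\mathop{LC}(M)$, and $M'=M^\#\cup(M'\cap\mathop{LC}(M))$), and each of classes (0)--(4) has $M$ itself $\sigma$-compact (as $M'$ is $\sigma$-compact and $I(M)$ is countable) whereas (5) does not.

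For (b) I would pin the chain $\mathbf{0}<\mathbf{1}<\{\omega,(\omom,\le_\infty)\}<(\omom,\le_\infty)\with\omega<(\omom,\le_\infty)\times\omega<\omom$ using the structural lemmas. The bottom relations separate by cofinality, since $\cof(\omom,\le_\infty)=\mathfrak{b}>1,\omega$ (Lemma~\ref{l:add_cof_omom_infty}). The incomparability of $\omega$ and $(\omom,\le_\infty)$ comes from both invariants at once: $\omega\gtq(\omom,\le_\infty)$ would give $\omega\ge\mathfrak{b}$, while $(\omom,\le_\infty)\gtq\omega$ would give $\mathfrak{d}=\add(\omom,\le_\infty)\le\add(\omega)=\omega$, both absurd. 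That $(\omom,\le_\infty)\with\omega$ is their least upper bound, strictly above each, is Lemma~\ref{l:with} (the reverse dominations fail because $(\omom,\le_\infty)\gtq\omega$ and $\omega\gtq(\omom,\le_\infty)$ both fail); the step to $(\omom,\le_\infty)\times\omega$ uses the general $\mathbf{A}\times\mathbf{B}\gtq\mathbf{A}\with\mathbf{B}$ together with Lemma~\ref{l:with_vs_and} for strictness; and $\omom\gtq(\omom,\le_\infty)\times\omega$ follows from the standard $\omom\gte\omom\times\omom$ combined with the coordinatewise morphisms $\omom\gtq(\omom,\le_\infty)$ (identity) and $\omom\gtq\omega$ (first-coordinate projection).

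For (c), every covering (``no realized type strictly between'') claim follows by the same sandwich: if $\mathbf{k}(M)$ lies strictly between consecutive model relations $X<Y$, then $Y\gtq\mathbf{k}(M)$ contributes, via Theorem~\ref{th:ub_k}, a topological constraint, while $\mathbf{k}(M)\gtq X$ with $\mathbf{k}(M)\not\gte X$ contributes, via Theorem~\ref{th:lb_k}, the complementary constraint; together they force exactly the defining condition of $Y$, so $\mathbf{k}(M)\gte Y$, a contradiction. I would run this verbatim for $\mathbf{0}\to\mathbf{1}$ (any non-discrete $M$ has $\mathbf{k}(M)\gtq\mathbf{1}$), for $\mathbf{1}\to\omega$ and $\mathbf{1}\to(\omom,\le_\infty)$, for $\{\omega,(\omom,\le_\infty)\}\to(\omom,\le_\infty)\with\omega$ (here ``above both'' implies ``above the join'' by Lemma~\ref{l:with}), and for $(\omom,\le_\infty)\with\omega\to(\omom,\le_\infty)\times\omega$; in the last step the fact that $M^\#$ compact $\Rightarrow M'$ $\sigma$-compact eliminates the one stray case and simultaneously yields that the successor is \emph{unique}.

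The main obstacle is the passage (4)$\to$(5). The easy direction $\omom\gtq(\omom,\le_\infty)\times\omega$ is above; the crux is the ZFC strictness $\omom\not\le_T(\omom,\le_\infty)\times\omega$, since when $\mathfrak{b}=\mathfrak{d}$ neither $\cof$ nor $\add$ separates them. I would prove $(\omom,\le_\infty)\times\omega\not\gtq\omom$ by hand: given a putative lower morphism $\psi_-\colon\omom\to(\omom,\le_\infty)\times\omega$, write $\psi_-(h)=(g_h,n_h)$ and put $V_m=\{h:n_h\le m\}$, so $\omom=\bigcup_m V_m$. Because $\omom$ under the product order is not a countable union of $\le$-bounded sets (the diagonal $h^*(k)=b_k(k)+1$ escapes any boxes $\{h\le b_m\}$), some $V_{m_0}$ is $\le$-unbounded; picking a countable $U\subseteq V_{m_0}$ that is unbounded in a single coordinate, its image has $n_h\le m_0$ bounded and $\{g_h:h\in U\}$ countable, hence $\le_\infty$-bounded since $\add(\omom,\le_\infty)=\mathfrak{d}\ge\omega_1$ (Lemma~\ref{l:add_cof_omom_infty}). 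Thus $\psi_-(U)$ is bounded while $U$ is not, contradicting that $\psi_-$ is a lower morphism. With strictness secured, realizability and immediacy follow: $\omom$ is realized by Baire space (Polish, not $\sigma$-compact), and any $\mathbf{k}(M)$ with $(\omom,\le_\infty)\times\omega<_T\mathbf{k}(M)\le_T\omom$ has $M$ analytic (from $\omom\gtq\mathbf{k}(M)$, Theorem~\ref{th:analytic}(1)) and not $\sigma$-compact (from $\mathbf{k}(M)\not\le_T(\omom,\le_\infty)\times\omega$ via Theorem~\ref{th:ub_k}(4)), whence $\mathbf{k}(M)\gte\omom$ by Theorem~\ref{th:analytic}(2). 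The non-uniqueness of this successor, and the final clause (6), are then quoted directly from Proposition~\ref{pr:menger}: non-Menger $M$ land above $\omom$, and consistently there are Menger, non-$\sigma$-compact $M$ with $\mathbf{k}(M)$ strictly above $(\omom,\le_\infty)\times\omega$ but incomparable with $\omom$.
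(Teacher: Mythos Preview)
Your proof is correct and follows the same overall architecture as the paper's: deduce claims (0)--(4) from Theorems~\ref{th:lb_k} and~\ref{th:ub_k}, claim (5) from Theorem~\ref{th:analytic}, and claim (6) from Proposition~\ref{pr:menger}. The paper's own proof is a one-paragraph pointer to those results, whereas you actually spell out the sandwich arguments and the order relations among the model relations; this added detail is all sound.

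The one genuine divergence is your treatment of the strict inequality $(\omom,\le_\infty)\times\omega <_T \omom$. The paper establishes this implicitly via topology: take $M$ the irrationals, so $\mathbf{k}(M)\gte\omom$ by Theorem~\ref{th:analytic}(2), while $(\omom,\le_\infty)\times\omega\not\gtq\mathbf{k}(M)$ by Theorem~\ref{th:ub_k}(4) since $M'=M$ is not $\sigma$-compact. For the immediacy of (5) over (4) the paper invokes the Hurewicz theorem that analytic Menger sets are $\sigma$-compact. You instead give a purely combinatorial, self-contained argument that $(\omom,\le_\infty)\times\omega\not\gtq\omom$ (writing $\omom$ as a countable union of fibers $V_m$, extracting a countable unbounded subset of some unbounded fiber, and using $\add(\omom,\le_\infty)=\mathfrak{d}\ge\omega_1$), and you derive immediacy directly from Theorem~\ref{th:analytic}(2) without Hurewicz. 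Both routes work; yours avoids the external descriptive-set-theoretic input, while the paper's route keeps the combinatorics to a minimum by leaning on the already-proved topological characterizations.
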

\begin{proof}
    Claims (0) through (4) follow from Theorems~\ref{th:lb_k} and \ref{th:ub_k}. Claim (5) from  Theorem~\ref{th:analytic}.
   The fact that class (5) is an immediate successor of (4) then follows from the result that analytic Menger sets are $\sigma$-compact.
Claim (6) follows from Proposition~\ref{pr:menger}.
  \end{proof}

As examples illustrating the eight types, consider: (0) the one point space, $M=\{1\}$, with $\mathbf{k}(M) \gte \mathbf{0}$, (1) the convergent sequence, $M=\omega+1$, with $\mathbf{k}(M) \gte \mathbf{1}$, (2a) a countable sum of convergent sequences, $M=\omega \times (\omega+1)$, where $\mathbf{k}(M) \gte \omega$, (2b) the metric fan, with $\mathbf{k}(\mathbb{F}) \gte (\omom,\le_\infty)$, (3) $M=\mathbb{F} \times (\omega+1)$, where $\mathbf{k}(M) \gte (\omom,\le_\infty)\with \omega$, (4) the square of the metric fan, where $\mathbf{k}(\mathbb{F}^2) \gte (\omom,\le_\infty)\times \omega$, and (5) $M$ the irrationals, $\mathbf{k}(M) \gte \omom$.  For  (6) see Proposition~\ref{pr:menger}.

Recall that the property of being a $k$-space is not in general productive.
Relatedly, the examples mentioned above show that the behavior of products in relation to $\mathbf{k}(M)$ is intriguing: squaring a space may lead to a strictly more complex $k$-structure, while multiplication by a compact space may strictly increase the complexity of the $k$-structure, and multiplication by different compact spaces may result in different outcomes with respect to $k$-structure.

\begin{ex} We have:
$\mathbf{k}(\mathbb{F}\times I)\gte \mathbf{k}(\mathbb{F}^2) >_{T} \mathbf{k}(\mathbb{F}\times (\omega+1))>_{T} \mathbf{k}(\mathbb{F})$.
\end{ex}

\comment{
\subsection{Cofinality, Additivity and Calibres}

XXX Tidy this up. Decide what (if anything) we want to keep :-)

\begin{lem} Let regular $\kappa \ge \lambda$ be infinite cardinals.

Then
$\mathbf{k}(\mathbb{F})$ is not
calibre $(\kappa,\lambda)$
if and only if $\lambda=\mathfrak{b}=\mathfrak{d}=\kappa$.
\end{lem}

We know $\K(M) \gtq \mathbf{k}(M)$, and $\K(M)$ is calibre $(\omega_1,\omega)$ for separable metrizable $M$. Hence:
\begin{lem}
    If $M$ is separable metrizable then $\mathbf{k}(M)$ is calibre $(\omega_1,\omega)$.
\end{lem}

What about stronger calibres?

We know that $\omega^\omega$ is calibre $\omega_1$ if and only if $\omega_1 < \mathfrak{b}$;  $\omega^\omega \gtq \mathbf{k}(M)$ if and only if $M$ is analytic and $\mathbf{k}(M) \gte \omega^\omega$ if and only if $M$ is analytic but not $\sigma$-compact. Hence:

\begin{lem} \

(1)    If $\omega_1 < \mathfrak{b}$ and $M$ is analytic then $\mathbf{k}(M)$ has calibre $\omega_1$.

(2) Let $M$ be analytic but not $\sigma$-compact. Then $\mathbf{k}(M)$ has calibre $\omega_1$ if and only $\omega_1 < \mathfrak{b}$.
\end{lem}

Note that $(\omega^\omega,\le_\infty) = (\omega^\omega,\omega^\omega,<^*)^\perp$. In particular, if $\mathcal{S}$ is a subset of $\omega^\omega$ then (1) $\mathcal{S}$ is unbounded in $(\omega^,\omega,\le_\infty)$ if and only if it is cofinal in $(\omega^,\omega,<^*)$ and (2) $\mathcal{S}$ is cofinal in $(\omega^,\omega,\le_\infty)$ if and only if it is unbounded in $(\omega^,\omega,<^*)$.

As noted above, by duality:
\begin{lem} We have $\cof(\mathbf{k}(\mathbb{F})) =\mathfrak{b}$ and   $\add(\mathbf{k}(\mathbb{F}))= \mathfrak{d}$.
\end{lem}
}

\subsection{The Cofinal Structure of $\mathbf{k}(\mathcal{M})$ and Counting Types}

Recalling that (Lemmas~\ref{l:below_K} and \ref{l:above_kc}) for metrizable $M$ with $M'$ not compact we have $\K(M) \gtq \mathbf{k}(M) \gtq \mathbf{kc}(M)\gte (M,\K(M),\in)$, from the results 3.11 through 3.14 of  \cite{GM1} we deduce  there is a wide variety of $k$-structures of separable metrizable spaces, with complex behavior under the  Tukey order.
\begin{thm} \

(1) There are $2^\mathfrak{c}$-many  Tukey types of $\mathbf{k}(M)$, for $M$ separable metrizable.

(2)     There is a $2^\mathfrak{c}$-sized family $\mathcal{M}$ of separable metrizable spaces such that if $M$ and $N$ are distinct elements of $\mathcal{M}$ then  $\mathbf{k}(M) \not\gtq \mathbf{k}(N)$.

(3) $\mathfrak{c}^+$, $I$ and $(\mathbb{P}(\omega),\subseteq)$ all order embed in $\mathbf{k}(\mathcal{M})$ under the  Tukey order.
\end{thm}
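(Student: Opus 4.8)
The plan is to transfer the complexity of the directed sets $\K(M)$, established in results~3.11--3.14 of \cite{GM1}, up to the $k$-structures via the ``squeeze''
\[ \K(M) \gtq \mathbf{k}(M) \gtq \mathbf{kc}(M) \gte (M,\K(M),\in),\]
which holds whenever $M'$ is not compact (Lemmas~\ref{l:below_K} and~\ref{l:above_kc}); here $\K(M)\gtq\mathbf{kc}(M)$ is in any case immediate, via $\phi_+=\mathrm{id}$ and $ \psi_-(x)=\{x\}$, since $\{x\}\subseteq K$ forces $x\in K$. The entire argument rests on one reduction: if a separable metrizable $M$ has $M'$ not compact and the two outer terms of the squeeze agree, i.e.\ $\mathbf{kc}(M)\gte\K(M)$, then the squeeze collapses and $\mathbf{k}(M)\gte\K(M)\gte\mathbf{kc}(M)$. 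The first task is therefore to observe that the witnessing spaces of \cite{GM1} can be taken to satisfy these hypotheses. Note that $M'$ not compact is automatic for them: each must be non-$\sigma$-compact (otherwise $\K(M)$ is too simple to realize the phenomena), and $M'$ compact would make $M=K\oplus D$ with $K$ compact and $D$ countable discrete, hence $M$ $\sigma$-compact. So only the collapse $\mathbf{kc}(M)\gte\K(M)$ genuinely needs checking.

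Granting the reduction, part~(2) follows quickly. Let $\mathcal{M}$ be the $2^\mathfrak{c}$-sized $\gtq$-antichain for $\K$ supplied by \cite{GM1}, so $\K(M)\not\gtq\K(N)$ for distinct $M,N\in\mathcal{M}$, with each member arranged to satisfy the reduction hypotheses (so $\mathbf{k}(M)\gte\K(M)$). If $\mathbf{k}(M)\gtq\mathbf{k}(N)$ for distinct $M,N\in\mathcal{M}$, then
\[ \K(M)\gtq\mathbf{k}(M)\gtq\mathbf{k}(N)\gtq\mathbf{kc}(N)\gte\K(N),\]
so $\K(M)\gtq\K(N)$, contradicting the antichain property. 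Hence $\mathcal{M}$ is a $\mathbf{k}$-antichain of size $2^\mathfrak{c}$. Part~(1) is then immediate: a $2^\mathfrak{c}$-antichain exhibits at least $2^\mathfrak{c}$ distinct Tukey types, while the matching upper bound holds because every separable metrizable space embeds in the Hilbert cube $[0,1]^\omega$, so there are at most $2^{|[0,1]^\omega|}=2^\mathfrak{c}$ of them up to homeomorphism and hence at most $2^\mathfrak{c}$ types of $\mathbf{k}(M)$.

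For part~(3), \cite{GM1} provides order embeddings of $\mathfrak{c}^+$, of the unit interval $I$ under its usual order, and of $(\mathbb{P}(\omega),\subseteq)$ into $(\{\K(M)\},\gtq)$; write such an embedding as $p\mapsto M_p$, say with $\K(M_p)\gtq\K(M_q)\iff p\ge q$. Arranging each $M_p$ to satisfy the reduction hypotheses, so that $\mathbf{k}(M_p)\gte\K(M_p)$, the same assignment $p\mapsto\mathbf{k}(M_p)$ is an order embedding into $(\mathbf{k}(\mathcal{M}),\gtq)$, since
\[ \mathbf{k}(M_p)\gtq\mathbf{k}(M_q)\iff\K(M_p)\gtq\K(M_q)\iff p\ge q.\]
Thus $\mathfrak{c}^+$, $I$ and $(\mathbb{P}(\omega),\subseteq)$ all order embed into $\mathbf{k}(\mathcal{M})$.

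The genuine content, and the step I expect to be the main obstacle, is the reduction itself: verifying $\mathbf{kc}(M)\gte\K(M)$ for the witnessing family. This collapse can fail in general---for instance $\mathbf{kc}(\mathbb{Q})\gte\omega$ while $\K(\mathbb{Q})\gte\omom$---so it is a real hypothesis on the spaces, not a triviality, and it is precisely the structural feature of the explicit constructions in \cite{GM1} that must be invoked. I would handle it by recording a convenient sufficient condition for $\mathbf{kc}(M)\gte\K(M)$ (namely that $M$ admit a family of compacta, cofinal in $\K(M)$ under inclusion, each recoverable up to closure from the point-compact relation), checking that the \cite{GM1} witnesses---which may be taken as closed subspaces of a suitable product of metric fans / Polish space whose compact sets are controlled by a single domination parameter---satisfy it, and, where convenient, replacing a witness by a Tukey-equivalent modification that preserves both $\K(M)$ and $\mathbf{kc}(M)$ while guaranteeing the hypotheses hold uniformly across the family.
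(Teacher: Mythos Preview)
Your proposal is correct and follows exactly the paper's approach: the paper's entire proof is the one-sentence observation that the squeeze $\K(M) \gtq \mathbf{k}(M) \gtq \mathbf{kc}(M)\gte (M,\K(M),\in)$ (for $M'$ not compact) lets one transfer results 3.11--3.14 of \cite{GM1} from the pairs $(\K(M),\mathbf{kc}(M))$ to the $k$-structures. Your write-up is a faithful expansion of this, and your identification of the collapse $\mathbf{kc}(M)\gte\K(M)$ as the one point requiring care is appropriate; the paper simply absorbs this into the citation of \cite{GM1}, whose witnessing families are already stated in terms of the mixed relation $\K(M)\gtq (N,\K(N),\in)$, so the squeeze applies directly without a separate verification of the collapse.
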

Consistently at least,  we can find a large family of Menger sets with distinct $k$-structures up to  Tukey equivalence, none with $k$-structure $\gtq \omom$.
\begin{thm}\label{th:many_menger}
    If $\aleph_1=\mathfrak{b}$, $\mathfrak{d}=\aleph_2=\mathfrak{c}$ and $2^\mathfrak{b} > \mathfrak{c}$ then there is a $2^\mathfrak{b}$-sized family $\mathcal{S}$ of Menger subsets of the reals such that

    (i) for all $M$ from $\mathcal{S}$ we have $\mathbf{k}(M)$ incomparable with $\omom$ and strictly above $(\omom,\le_\infty) \times \omega$, and

    (ii) $\mathbf{k}(M) \not\gte \mathbf{k}(N)$ for every distinct $M$ and $N$ from $\mathcal{S}$.
\end{thm}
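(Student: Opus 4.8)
The plan is to split the two requirements. Requirement (i) turns out to be automatic under the stated hypotheses, so essentially all the work goes into the combinatorial construction witnessing (ii).

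For (i): let $M$ be \emph{any} separable metrizable space with $|M|=\aleph_1$. Since $\aleph_1=\mathfrak{b}<\mathfrak{d}=\aleph_2$, in particular $|M|=\omega_1<\mathfrak{d}$, so $M$ is Menger by the remark preceding Proposition~\ref{pr:menger}. As $\mathfrak{b}<\mathfrak{d}$ holds, Proposition~\ref{pr:menger}(2) applies verbatim and already delivers that $\mathbf{k}(M)$ is strictly above $(\omom,\le_\infty)\times\omega$ and incomparable with $\omom$. Thus every size-$\aleph_1$ subset of the reals satisfies (i), and the entire substance of the theorem is (ii): producing $2^{\mathfrak{b}}=2^{\aleph_1}$ such spaces with pairwise inequivalent $k$-structures.

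For (ii) I would first fix, using $\mathfrak{b}=\aleph_1$, a $\le^*$-increasing unbounded scale $\langle f_\alpha:\alpha<\omega_1\rangle$ together with a set $\{x_\alpha:\alpha<\omega_1\}$ of reals in general position. For each $A\subseteq\omega_1$ I would assemble a size-$\aleph_1$ set $M_A\subseteq\mathbb{R}$ out of fans and convergent sequences whose limit points along $\{x_\alpha\}$ are dictated by $A$; note that, as $\omega_1<\mathfrak{c}$, every compact subset of $M_A$ is countable, so $M_A$ is not $\sigma$-compact. The aim of this step is a \emph{reduction}: $\mathbf{k}(M_A)$ is Tukey equivalent to an explicit combinatorial core $\mathbf{R}_A$, built from the relation $i_\infty$ of Lemma~\ref{l:small_Pk} on $([\omega_1]^{<\omega})^\omega$-objects restricted to coordinates in $A$. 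The sandwich $\K(M_A)\gtq\mathbf{k}(M_A)\gtq\mathbf{kc}(M_A)$ (Lemmas~\ref{l:below_K} and~\ref{l:above_kc}), combined with explicit morphisms in the style of Theorems~\ref{th:lb_k} and~\ref{th:ub_k}, would pin $\mathbf{k}(M_A)\gte\mathbf{R}_A$ from both sides, while the Menger hypothesis keeps $\mathbf{R}_A\not\gtq\omom$, consistent with (i).

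I would then draw the codes from an almost disjoint (indeed independent) family $\mathcal{A}\subseteq P(\omega_1)$ of size $2^{\aleph_1}$; this is exactly where $2^{\mathfrak{b}}>\mathfrak{c}$ is used, both to make the family large and to keep distinct codes ``far apart.'' The crux, and the main obstacle, is the \emph{separation lemma}: for distinct $A,B\in\mathcal{A}$ one has $\mathbf{R}_A\not\gte\mathbf{R}_B$. A bare count of morphisms is hopeless, since between two relations of size $\aleph_1$ there may be $2^{\aleph_1}$ upper morphisms; instead I would isolate a genuinely Tukey-invariant feature of $\mathbf{R}_A$ — roughly, the set of coordinates $\alpha$ contributing an $(\omom,\le_\infty)$-piece that no single compactum can absorb — and argue, using that $\langle f_\alpha\rangle$ is a scale with $\mathfrak{d}=\aleph_2$ (so initial segments of length $<\aleph_2$ stay bounded while cofinal pieces do not), that any morphism $\mathbf{R}_A\gte\mathbf{R}_B$ must transport this invariant of $A$ onto that of $B$ modulo the almost-disjointness equivalence, forcing $A=B$ within $\mathcal{A}$. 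I expect this separation lemma to be by far the hardest step: one must show the chosen invariant survives arbitrary cofinal-set-preserving maps, which requires exploiting the rigidity of the scale (via $\mathfrak{b}<\mathfrak{d}$) to exclude ``diagonal'' morphisms mixing many coordinates, and it is precisely here that the interplay $\mathfrak{b}=\aleph_1$, $\mathfrak{d}=\aleph_2=\mathfrak{c}$ and $2^{\mathfrak{b}}>\mathfrak{c}$ is indispensable.
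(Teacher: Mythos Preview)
Your treatment of (i) is correct and matches the paper: under the hypotheses, every size-$\aleph_1$ subspace of $\mathbb{R}$ works by Proposition~\ref{pr:menger}(2).

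For (ii), however, the paper's argument is entirely different from your plan, and far simpler. The paper does not attempt any direct separation lemma. It starts from a ZFC result of \cite{FG-ShapeCptCovers} giving a $2^{\mathfrak{b}}$-sized family $\mathcal{S}'$ of Menger subsets of the reals, each of size $\mathfrak{b}$, and then applies a \emph{counting} argument: by Corollary~4.2(4) of \cite{GM1}, every instance of $\K(N)\gtq (M,\K(M),\in)$ between separable metrizable spaces is realized by a closed subset of the Hilbert cube, so for fixed $N$ there are at most $\mathfrak{c}$ such $M$. Hence the equivalence classes of the relation $M\sim N\iff\bigl(\K(N)\gtq\mathbf{kc}(M)$ and $\K(M)\gtq\mathbf{kc}(N)\bigr)$ have size at most $\mathfrak{c}$. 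The hypothesis $2^{\mathfrak{b}}>\mathfrak{c}$ then allows one to thin $\mathcal{S}'$ to a $2^{\mathfrak{b}}$-sized $\mathcal{S}$ of pairwise $\sim$-inequivalent spaces, and the sandwich $\K(M)\gtq\mathbf{k}(M)\gtq\mathbf{kc}(M)$ immediately yields (ii). No explicit construction, no combinatorial core $\mathbf{R}_A$, no Tukey invariant.

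Your plan has a genuine gap exactly where you flag it: the separation lemma. You propose to recover $A$ from $\mathbf{R}_A$ via a Tukey-invariant ``set of active coordinates'', but you give no argument that any such invariant exists or is preserved by arbitrary upper morphisms; your own observation that there may be $2^{\aleph_1}$ candidate morphisms already shows why this is delicate. The assertion that ``$2^{\mathfrak{b}}>\mathfrak{c}$ is indispensable here'' is also not convincing: a putative Tukey equivalence $\mathbf{R}_A\gte\mathbf{R}_B$ involves only two fixed relations of size $\aleph_1$, and there is no evident way for a global cardinal-arithmetic hypothesis to intervene in that local question. In the paper, by contrast, $2^{\mathfrak{b}}>\mathfrak{c}$ is used purely for the pigeonhole step. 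The key idea you are missing is that the separation problem can be bypassed entirely by bounding the number of possible Tukey relations via the realization theorem of \cite{GM1}.
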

\begin{proof}
    As shown in \cite{FG-ShapeCptCovers},  there is, in \textsc{ZFC},  a $2^\mathfrak{b}$-sized family $\mathcal{S}'$ of Menger subsets of the reals all of size $\mathfrak{b}$. By Proposition~\ref{pr:menger}, all of these have $\mathbf{k}$ incomparable with $\omom$ and strictly above $(\omom,\le_\infty) \times \omega$.
As shown in \cite{GM1} (see Corollary~4.2(4)) every instance of a Tukey quotient $\mathcal{K}(N) \tq (M,\mathcal{K}(M))$, where $M$ and $N$ are separable metrizable is `realized' by a closed subset of the Hilbert cube. It follows that given $N$ there are only $\mathfrak{c}$-many $M$ with $\mathcal{K}(N) \tq (M,\mathcal{K}(M))$. So if we define an equivalence relation $\sim$ on $\mathcal{S}$ by $M \sim N$ if and only if $\mathcal{K}(N) \tq (M,\mathcal{K}(M))$ and $\mathcal{K}(M) \tq (N,\mathcal{K}(N))$, then the equivalence classes have size no more than the continuum.
Hence when $2^\mathfrak{b} > \mathfrak{c}$ we can extract from $\mathcal{S}'$ a $2^\mathfrak{b}$-sized subcollection, $\mathcal{S}$, where distinct elements of $\mathcal{S}$ are $\sim$-incomparable.
From $\K(M) \gtq \mathbf{k}(M) \gtq  (M,\K(M),\in)$ we deduce that if $M$ and $N$ are distinct elements of $\mathcal{S}$ then $\mathbf{k}(M) \not\gte \mathbf{k}(N)$, as required.
\end{proof}

Let us verify that the  configuration of cardinals in Theorem~\ref{th:many_menger} is consistent. To see this start with a model, $V_0$, of (GCH). Add $\aleph_3$-many Cohen subsets of $\aleph_1$, to get $V_1$. In $V_1$ the Continuum Hypothesis is still true, but $2^{\aleph_1}=\aleph_3$. Now iterate Miller forcing in length $\omega_2$, to get model, $V_2$, of set theory. This model has all the required features.
Briefly, this is because the iteration of Miller forcing is proper. Since conditions in Miller forcings are trees of finite sequences of integers, and $V_0$ satisfies (CH), the Miller iteration has the $\aleph_2$-cc, so all cardinals from $\aleph_2$ are preserved. Each Miller iterand adds a real, so in $V_2$ the continuum is $\aleph_2$.
In fact, because Miller forcing adds an unbounded real, $\mathfrak{d}=\aleph_2$.
But $\mathfrak{b}=\aleph_1$. One way to see this is to recall a standard fact about iterating Miller forcing over (CH) is that the full iteration preserves $P$-points, and so does not add dominating reals.

\section{The Sequential Structure} \label{s:seq}

\subsection{General Results}

We start by recording the natural connection between the sequential and $k$-structures. The proof is left to the reader.
\begin{lem} For any sequential space $X$ we have
    $\mathbf{seq}(X) \gtq \mathbf{k}(X)$.
\end{lem}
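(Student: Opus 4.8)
The plan is to exhibit an explicit Tukey morphism directly from the definitions, exploiting the fact that $\mathbf{seq}(X)$ and $\mathbf{k}(X)$ share both their domain of ``challenges'' and their defining relation. Writing out the two relations, $\mathbf{seq}(X) = (\mathop{NC}(X), \mathop{CS^+}(X), \nci)$ and $\mathbf{k}(X) = (\mathop{NC}(X), \K(X), \nci)$, I observe that the only difference between them lies in the range: $\mathop{CS^+}(X)$ for the former and $\K(X)$ for the latter. The crucial structural fact is that every infinite convergent sequence together with its limit is compact (any open cover has a member containing the limit, and that member absorbs all but finitely many terms), so that $\mathop{CS^+}(X) \subseteq \K(X)$.

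Given this inclusion, I would take $\phi_+ \colon \mathop{CS^+}(X) \to \K(X)$ to be the inclusion map and $ \psi_- \colon \mathop{NC}(X) \to \mathop{NC}(X)$ to be the identity (legitimate, since both domains are exactly $\mathop{NC}(X)$). To verify the defining implication (iii) of a morphism, fix $F \in \mathop{NC}(X)$ (a challenge for $\mathbf{k}(X)$) and $S^+ \in \mathop{CS^+}(X)$ (a response for $\mathbf{seq}(X)$), and suppose $ \psi_-(F) \nci S^+$, i.e.\ $F \cap S^+$ is not closed in $S^+$. Since $\phi_+(S^+) = S^+$ and the governing relation is the same $\nci$ in both structures, this is literally the statement $F \nci \phi_+(S^+)$. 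Hence $( \psi_-, \phi_+)$ is a morphism and $\mathbf{seq}(X) \gtq \mathbf{k}(X)$.

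Equivalently, and perhaps more conceptually, I could invoke Lemma~\ref{l:ulm}(2): the inclusion $\phi_+$ is an upper morphism, because any set cofinal in $\mathbf{seq}(X)$, regarded as a subcollection of $\K(X)$ via $\mathop{CS^+}(X) \subseteq \K(X)$, remains cofinal in $\mathbf{k}(X)$ --- for each non-closed $F$, the convergent sequence $S^+$ witnessing $F \nci S^+$ is itself a compact set witnessing the same instance in $\mathbf{k}(X)$. There is no genuine obstacle here: the content of the lemma is simply the inclusion $\mathop{CS^+}(X) \subseteq \K(X)$ combined with the coincidence of the $\nci$ relation across the two structures. The only points meriting a word of care are confirming compactness of $S^+$ and noting that sequentiality plays no role in the existence of the morphism itself (it guarantees merely that both structures admit cofinal sets, so that the quotient is informative).
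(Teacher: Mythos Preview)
Your proposal is correct and is precisely the natural argument the paper has in mind (the paper leaves the proof to the reader). The inclusion $\mathop{CS^+}(X)\subseteq\K(X)$ together with the identity on $\mathop{NC}(X)$ gives the morphism, exactly as you describe.
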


The next lemma significantly simplifies the relation, $\mathbf{seq}(X)$.
\begin{lem} Let $X$ be sequential. Then
\[\mathbf{seq}(X) \gte (\mathop{CS}(X),\mathop{CS^+}(X),\neg c) \gte  (\mathop{CS}(X), {ii}),\] where $A {ii} B$ if and only if $A$ has infinite intersection with $B$.
\end{lem}

\begin{proof} Using the identity map for both  $ \psi_-$ and $\phi_+$, it is straightforward to verify that    $\mathbf{seq}(X) \gtq (\mathop{CS}(X),\mathop{CS^+}(X),\nci)$.
In the other direction, again take the identity map for $\phi_+$.  Choose $F$ to be a non-closed subset of $X$, by sequentiality of $X$ there is a $C\in \mathop{CS}^+(X)$  such that $F(\nci)C$, and define $ \psi_-(F)=C^-$ where $C^-$ is the sequence obtained by removing the limit point of $C$. Now $( \psi_-,\phi_+)$ witness  $(\mathop{CS}(X),\mathop{CS^+}(X),\nci)\gtq \mathbf{seq}(X)$.

It remains to show that $(\mathop{CS}(X),\mathop{CS^+}(X),\nci)\gte (\mathop{CS}(X), {ii})$. Once we notice that for $D$  a convergent sequence without limit and $C$  a convergent sequence with limit, $D(\nci)C$ if and only if $D{ii}C$,  this is clear.
\end{proof}

Now we start to deconstruct $\mathbf{seq}(X)$, using the sum operation of relations.
\begin{lem} For any space $X$ we have
    $\mathbf{seq}(\bigoplus_{\lambda \in \Lambda} X_\lambda) \gte \sum_{\lambda \in \Lambda} \mathbf{seq}(X_\lambda)$.
\end{lem}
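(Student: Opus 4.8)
The plan is to establish the Tukey equivalence $\mathbf{seq}(\bigoplus_{\lambda \in \Lambda} X_\lambda) \gte \sum_{\lambda \in \Lambda} \mathbf{seq}(X_\lambda)$ by exhibiting morphisms in both directions, working with the simplified form $(\mathop{CS}(X),{ii})$ of the sequential structure provided by the previous lemma. The key observation driving everything is topological: a convergent sequence in a disjoint sum $\bigoplus_{\lambda} X_\lambda$ must, past some tail, lie entirely within a single summand $X_\lambda$, since the summands are clopen and a convergent sequence together with its limit is compact, hence meets only finitely many clopen pieces. Thus every $S \in \mathop{CS}(\bigoplus_\lambda X_\lambda)$ is, up to a finite modification, an element of $\mathop{CS}(X_\lambda)$ for exactly one $\lambda$ (the one containing its limit point). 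Recalling the definition of $\sum_{\lambda}\mathbf{seq}(X_\lambda)$, its domain and range are $\bigoplus_\lambda \mathop{CS}(X_\lambda)$, with the summed relation $S$ relating elements only within a common summand.

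First I would prove $\sum_{\lambda \in \Lambda} \mathbf{seq}(X_\lambda) \gtq \mathbf{seq}(\bigoplus_\lambda X_\lambda)$. For the upper map $\phi_+ : \bigoplus_\lambda \mathop{CS}(X_\lambda) \to \mathop{CS}(\bigoplus_\lambda X_\lambda)$, I send a sequence $S \in \mathop{CS}(X_\lambda)$ to the same set viewed inside the sum (the inclusion is an embedding, so convergent sequences are preserved). For the lower map $\psi_-: \mathop{CS}(\bigoplus_\lambda X_\lambda) \to \bigoplus_\lambda \mathop{CS}(X_\lambda)$, given $T$ I locate the unique $\lambda$ whose summand contains the limit of $T$, discard the finitely many terms of $T$ outside $X_\lambda$, and place the resulting tail (still an infinite convergent sequence in $X_\lambda$) into the $\lambda$-th summand. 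The verification reduces to: if $\psi_-(T)$ has infinite intersection with some $S$ in the summand indexed by $\lambda$, then since intersection happens inside $X_\lambda$ and $\psi_-(T)$ differs from $T$ only finitely, $T$ itself has infinite intersection with $\phi_+(S)$.

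Next I would prove the reverse quotient $\mathbf{seq}(\bigoplus_\lambda X_\lambda) \gtq \sum_\lambda \mathbf{seq}(X_\lambda)$, which is essentially symmetric. The upper map sends a convergent sequence in $X_\lambda$ (an element of the summand in the range $\bigoplus_\lambda \mathop{CS}^+(X_\lambda)$-side) to itself in the big sum; the lower map takes a sequence $T$ in the big sum, finds its host summand $\lambda$, truncates to the tail lying in $X_\lambda$, and records it in the $\lambda$-th coordinate. The morphism condition again follows from the fact that infinite intersection is detected within a single summand and is insensitive to removing finitely many terms.

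The main obstacle, and the only genuinely nontrivial point, is confirming that every convergent sequence in the disjoint sum is eventually trapped in one summand, so that the maps above are well-defined and the relation truly decomposes along $\Lambda$; once this compactness-and-clopenness argument is in hand, both morphism verifications are routine, since the ``infinitely often'' relation ${ii}$ is unaffected by the finite adjustments $\psi_-$ performs and since membership in the summed relation $S$ is exactly ``both lie in the same summand and are ${ii}$-related there.'' A minor bookkeeping subtlety is the degenerate case of summands $X_\lambda$ with $\mathop{CS}(X_\lambda)=\emptyset$ (no convergent sequences), where $\mathbf{seq}(X_\lambda) \gte \mathbf{0}$ contributes nothing to the sum; this is handled automatically because no sequence of the big sum can have its limit in such a summand.
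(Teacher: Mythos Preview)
Your approach is essentially the same as the paper's --- both directions hinge on the observation that a convergent sequence in $\bigoplus_\lambda X_\lambda$ is, up to finitely many terms, trapped in a single summand, and both morphisms are built from inclusion and truncation. There is, however, one technical slip worth noting: you work throughout with the simplified form $(\mathop{CS}(X),ii)$, but the lemma providing that simplification requires $X$ to be sequential, whereas the present statement is for arbitrary spaces $X_\lambda$. The paper sidesteps this by working directly with the original definition $\mathbf{seq}(X)=(\mathop{NC}(X),\mathop{CS}^+(X),\nci)$: for $\sum_\lambda \mathbf{seq}(X_\lambda) \gtq \mathbf{seq}(\bigoplus_\lambda X_\lambda)$ it takes a non-closed $F \subseteq \bigoplus_\lambda X_\lambda$, picks $\lambda_F$ with $F\cap X_{\lambda_F}$ non-closed in $X_{\lambda_F}$, and sets $\psi_-(F)=F\cap X_{\lambda_F}$, with $\phi_+$ the identity. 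Your argument is easily repaired to match this level of generality, and no new idea is needed.

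A smaller point: in your second direction ($\mathbf{seq}(\bigoplus_\lambda X_\lambda) \gtq \sum_\lambda \mathbf{seq}(X_\lambda)$) you appear to have swapped the roles of the upper and lower maps. The upper map $\phi_+$ must go from $\mathop{CS}^+(\bigoplus_\lambda X_\lambda)$ to $\bigoplus_\lambda \mathop{CS}^+(X_\lambda)$, so it is the truncation map, while $\psi_-$ is the inclusion --- exactly as the paper has it. Since the two maps you describe are the correct ones, just mislabeled, the verification goes through once the labels are fixed.
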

\begin{proof} First we show that $\sum_{\lambda \in \Lambda} \mathbf{seq}(X_\lambda)\gtq \mathbf{seq}(\bigoplus_{\lambda \in \Lambda} X_\lambda)$.  Let $F$ be a non-closed subset of $\bigoplus_{\lambda \in \Lambda } X_\lambda$. Fix some $\lambda_F$ so that $F\cap X_\lambda$  is non-closed in $X_\lambda$. Set  $ \psi_-(F) = X_{\lambda_F}\cap F$. We use the identity map as $\phi_+$. Then $( \psi_-,\phi_+)$ is the desired morphism.

For the converse, we use identity map as $\psi_-$. For  $S^+$  a convergent sequence in $\bigoplus_{\lambda \in \Lambda} X_\lambda$, let $X_{\lambda_{S^+}}$ be the term of the disjoint sum containing the limit of $\lambda_{S^+}$, and  define $\psi_+(S^+) = S^+\cap X_{\lambda_{S^+}}$, which is clearly a convergent sequence with limit in  $X_{\lambda_C}$. Then $(\psi_-,\psi_+)$ is the desired morphism.
\end{proof}

For any space $X$ and point $x$ in $X$, let $\mathop{CS}(x,X)$ be all $S$ in $\mathop{CS}(X)$ converging to $x$. Let $\mathbf{seq}(x,X)=(\mathop{CS}(x,X),\mathop{CS}(x,X), \nci) \gte (\mathop{CS}(x,X),ii)$.

\begin{lem} Let $X$ be sequential.
Then $\mathbf{seq}(X) \gte \sum_{x \in X} \mathbf{seq}(x,X)$.
\end{lem}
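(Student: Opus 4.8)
The plan is to reduce everything to the simplified ``infinite intersection'' relations and then observe that the resulting identification is nothing more than a relabelling, i.e.\ a relation isomorphism. By the preceding lemma we have $\mathbf{seq}(X) \gte (\mathop{CS}(X), ii)$, and by definition each summand satisfies $\mathbf{seq}(x,X) \gte (\mathop{CS}(x,X), ii)$. Since the sum of relations respects Tukey equivalence (apply Lemma~\ref{l:sum2} in both directions), we have $\sum_{x \in X}\mathbf{seq}(x,X) \gte \sum_{x \in X}(\mathop{CS}(x,X), ii)$. Thus it suffices to prove $(\mathop{CS}(X), ii) \gte \sum_{x \in X}(\mathop{CS}(x,X), ii)$.

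The crucial topological input, which I would isolate first, is the following: if $S_1, S_2 \in \mathop{CS}(X)$ satisfy $S_1\,ii\,S_2$, then $S_1$ and $S_2$ converge to the same point. Indeed, $S_1 \cap S_2$ is an infinite subset of each of the two convergent sequences, hence is itself a convergent sequence whose limit agrees with $\lim S_1$ and with $\lim S_2$; as $X$ is Hausdorff these coincide. A consequence, recording $\lim S$ for the (unique) limit of $S$, is that the assignment $S \mapsto (S, \lim S)$ is a bijection of $\mathop{CS}(X)$ onto $\bigoplus_{x \in X}\mathop{CS}(x,X)$, the common domain and range of the sum; surjectivity and injectivity are immediate, the point being only that uniqueness of limits makes $\mathop{CS}(X) = \bigsqcup_{x} \mathop{CS}(x,X)$ a genuine partition.

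Next I would check that this bijection is a relation isomorphism between $(\mathop{CS}(X), ii)$ and $\sum_{x}(\mathop{CS}(x,X), ii)$. By definition of the sum, $(S_1, \lim S_1)$ is $S$-related to $(S_2, \lim S_2)$ exactly when the two carry the same tag, $\lim S_1 = \lim S_2$, and $S_1\,ii\,S_2$ holds within that summand. The observation of the previous paragraph guarantees that whenever $S_1\,ii\,S_2$ the tags automatically agree, so the $ii$-relation on $\mathop{CS}(X)$ corresponds precisely to the sum relation. Taking the bijection as $\phi_+$ and its inverse as $\psi_-$ (and the same pair reversed for the opposite direction) yields morphisms establishing $\gte$; in every case the morphism condition $\psi_-(b)\,A\,a \Rightarrow b\,B\,\phi_+(a)$ collapses to the statement that $ii$-related sequences share a limit, so the tags match for free.

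The only step requiring genuine care is the topological observation, and in particular its reliance on Hausdorffness to pin down a unique limit so that the decomposition of $\mathop{CS}(X)$ into the fibres $\mathop{CS}(x,X)$ is a disjoint partition rather than an overlapping cover; without this the sum would overcount sequences. Everything after that point is routine bookkeeping with the definition of the sum of relations, so I expect no further obstacle.
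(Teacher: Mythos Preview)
Your proposal is correct and follows essentially the same route as the paper: reduce both sides to the $ii$ relations, use that $\mathop{CS}(X)$ partitions as $\bigsqcup_{x}\mathop{CS}(x,X)$, and observe that under this bijection the relation $ii$ on $\mathop{CS}(X)$ coincides with the sum relation, so the identification is a relation isomorphism. The paper compresses all of this into ``clear after unpacking definitions'' and names the same maps (the partition map $S\mapsto S\in\mathop{CS}(\lim S,X)$ and the inclusions), whereas you make explicit the key point the paper leaves tacit---that $S_1\,ii\,S_2$ forces $\lim S_1=\lim S_2$, which is exactly what makes the partition genuine and the sum relation match $ii$.
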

\begin{proof}
 This is clear after unpacking definitions, and noting that $\mathop{CS}(X)$ is the disjoint union over all $x$ in $X$ of $\mathop{CS}(x,X)$. Recall we can take $\mathbf{seq}(X)=(\mathop{CS}(X),ii)$, and $\sum_{x \in X} \mathbf{seq}(x,X)=(\bigoplus_{x \in X} \mathop{CS}(x,X), \bigoplus_{x \in X} \mathop{CS}(x,X), II)$, where $S (II) T$ if and only if $S,T$ are in some $\mathop{CS}(x,X)$ and $S \cap T$ is infinite.
 Now $( \psi_-,\phi_+)$ and $(\psi_-,\psi_+)$ are the required morphisms if we take $\phi_+$ and $\psi_-$ to be the map taking $S$ in $\mathop{CS}(X)$ to $S$ in $\mathop{CS}(x,X)$, where $x$ is the limit of $S$, and $ \psi_-,\psi_+$ on each $\mathop{CS}(x,X)$ to be the inclusion map into $\mathop{CS}(X)$.
\end{proof}

\begin{lem}\label{l:inj}
    Suppose $\sum_{x \in X} \mathbf{seq}(x,X)\gtq \sum_{y \in Y} \mathbf{seq}(y,Y)$ via the morphism $( \psi_-,\phi_+)$.
    Suppose for $i=0,1$ we have  $S_i$ in $\mathbf{seq}(y_i,Y)$  and $ \psi_-(S_i)$ in $\mathbf{seq}(x_i,X)$.  If $y_0 \ne y_1$ then $x_0 \ne x_1$. Hence
         there is an injection of $Y'$ into $X'$.
\end{lem}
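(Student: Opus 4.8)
The plan is to establish the first assertion by contraposition: assuming $x_0 = x_1$, I will deduce $y_0 = y_1$. The promised injection of $Y'$ into $X'$ will then follow by fixing, once and for all, a witnessing convergent sequence at each non-isolated point.

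First recall that, unravelling the sum operation as in the preceding lemmas, $\sum_{x \in X}\mathbf{seq}(x,X) \gte (\bigoplus_{x}\mathop{CS}(x,X), II_X)$, where $S\,(II_X)\,T$ holds precisely when $S$ and $T$ converge to a common point of $X$ and $S \cap T$ is infinite; write $II_Y$ for the analogous relation over $Y$. Thus the morphism hypothesis supplies $\psi_-$ and $\phi_+$ such that, for every $b$ in the domain over $Y$ and every $a$ in the range over $X$, one has $\psi_-(b)\,(II_X)\,a \implies b\,(II_Y)\,\phi_+(a)$.

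Now suppose $\psi_-(S_0)$ and $\psi_-(S_1)$ converge to the common point $x := x_0 = x_1$. The crux of the argument is to produce a single test sequence that meets both of them: set $T = \psi_-(S_0) \cup \psi_-(S_1)$. As a union of two sequences converging to $x$, the set $T$ is again a sequence converging to $x$ and not containing $x$, so $T \in \mathop{CS}(x,X)$, and $T$ has infinite intersection with each $\psi_-(S_i)$. Hence $\psi_-(S_i)\,(II_X)\,T$ for $i = 0,1$, and applying the morphism with $a = T$, $b = S_i$ gives $S_i\,(II_Y)\,\phi_+(T)$ for both $i$. Since $S_i \in \mathop{CS}(y_i,Y)$ and $II_Y$ relates only sequences with a common limit, $\phi_+(T)$ lies in $\mathop{CS}(y_0,Y)$ and in $\mathop{CS}(y_1,Y)$ at once. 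But $\phi_+(T)$ is a single element of the disjoint sum $\bigoplus_{y}\mathop{CS}(y,Y)$, so it inhabits exactly one summand, forcing $y_0 = y_1$, as required.

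For the injection, for each $y \in Y'$ fix a single $S_y \in \mathop{CS}(y,Y)$ (possible, since $y$ is non-isolated) and let $\iota(y)$ be the limit of $\psi_-(S_y)$, which is a non-isolated point of $X$, so $\iota(y) \in X'$; the contrapositive just proved shows $\iota$ is injective. The one step I would scrutinize—and the only genuinely topological point—is the verification that $T$ really is an element of $\mathop{CS}(x,X)$, i.e.\ that the union of two convergent sequences sharing the limit $x$ is itself a convergent sequence to $x$ with $x$ excluded and no spurious limit points. This is routine in the Hausdorff (indeed separable metrizable) setting under consideration, but it is the hinge on which the clean application of the morphism condition turns.
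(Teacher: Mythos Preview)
Your proof is correct and follows essentially the same approach as the paper: both take $T=\psi_-(S_0)\cup\psi_-(S_1)$ as a common upper bound when $x_0=x_1$, and use it to force $S_0,S_1$ to have a common upper bound in the $Y$-sum, contradicting $y_0\ne y_1$. The paper phrases this via ``$\psi_-$ carries unbounded sets to unbounded sets,'' while you unpack the morphism condition directly with $\phi_+(T)$, but the argument is the same.
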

\begin{proof}
    As $y_0 \ne y_1$ we see $S_0$ and $S_1$ do not have an upper bound (such would have to converge to both $y_0$ and $y_1$). So $ \psi_-(S_0)$ and $ \psi_-(S_1)$ do not have an upper bound. Since any two elements of a $\mathbf{seq}(x,X)$ do have an upper bound (their union, for example) we see $x_0$ and $x_1$ must be different.

    Recall $\mathbf{seq}(y,Y)$ is empty if and only if $y$ is isolated. So for any $y$ in $Y'$, pick some $S$ in $\mathbf{seq}(y,Y)$ and let $g(x)$ be such that $ \psi_-(S)$ is in $\mathbf{seq}(g(x),X)$. As $\mathbf{seq}(g(x),X)$ is non-empty we have that $g(x)$ is in $X'$, and the first part says $g$ is an injection.
\end{proof}

Next we uncover the structure of $\mathbf{seq}(X)$ of a first countable space $X$, connecting it to the special relation $\left(\left(\prod_n [\kappa_n]^{<\omega}\right)_\infty,i_\infty \right)$ from Section~\ref{ss:ssr}.
\begin{lem}
    Suppose $x$ in $X$ has a local base $(B_n)_n$ which is strictly decreasing, $B_n \supsetneq B_{n+1}$. Let $D_n=B_n \setminus B_{n+1}$ and $\kappa_n=|D_n|$.
    Then $\mathbf{seq}(x,X) \gte  \left(\left(\prod_n [\kappa_n]^{<\omega}\right)_\infty,i_\infty \right)$.
\end{lem}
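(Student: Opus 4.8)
The plan is to exhibit an explicit, essentially bijective, correspondence between convergent sequences to $x$ and the combinatorial objects $(F_n)_n$, and to check that it is a morphism in both directions. First I would replace $\mathbf{seq}(x,X)$ by the Tukey-equivalent relation $(\mathop{CS}(x,X),{ii})$, using the reduction recorded just before the statement, so that the relation on both sides is the symmetric ``meets in infinitely much'' relation. Fix bijections $b_n\colon D_n\to\kappa_n$, and define $\Phi\colon \mathop{CS}(x,X)\to \left(\prod_n[\kappa_n]^{<\omega}\right)_\infty$ by $\Phi(S)=\left(b_n[S\cap D_n]\right)_n$, and $\Psi\colon \left(\prod_n[\kappa_n]^{<\omega}\right)_\infty \to \mathop{CS}(x,X)$ by $\Psi\big((F_n)_n\big)=\bigcup_n b_n^{-1}[F_n]$.

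Before checking the morphism conditions I would verify that $\Phi$ and $\Psi$ are well defined. Since $B_{n+1}$ is a neighborhood of $x$ and $S$ converges to $x$, all but finitely many terms of $S$ lie in $B_{n+1}$, so $S\cap D_n\subseteq S\setminus B_{n+1}$ is finite and $b_n[S\cap D_n]\in[\kappa_n]^{<\omega}$; moreover, using $\bigcap_n B_n=\{x\}$ (which holds in the first countable, $T_1$ setting under consideration) together with $x\notin S$, the infinite set $S$ is covered, off a finite set, by the disjoint finite sets $S\cap D_n$, whence infinitely many of them are nonempty and $\Phi(S)$ indeed lies in the $\infty$-part. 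For $\Psi$, the set $\bigcup_n b_n^{-1}[F_n]$ is infinite (as $F_n\ne\emptyset$ infinitely often) and converges to $x$: given $B_N$, for $n\ge N$ we have $b_n^{-1}[F_n]\subseteq D_n\subseteq B_N$, so only the finitely many points arising from $n<N$ fall outside $B_N$; hence $\Psi\big((F_n)_n\big)\in\mathop{CS}(x,X)$.

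The two morphism verifications are then short and symmetric. For $(\mathop{CS}(x,X),{ii})\gtq \left(\left(\prod_n[\kappa_n]^{<\omega}\right)_\infty,i_\infty\right)$ I would take the pair $(\Psi,\Phi)$: if $\Psi\big((F_n)_n\big)$ meets $S$ in an infinite set, then since $\bigcup_n b_n^{-1}[F_n]\cap S=\bigcup_n \big(b_n^{-1}[F_n]\cap S\cap D_n\big)$ is an infinite union of finite sets, infinitely many summands are nonempty, which says exactly $(F_n)_n\, i_\infty\, \Phi(S)$. For the reverse quotient I would take the pair $(\Phi,\Psi)$: if $\Phi(S)\, i_\infty\, (F_n)_n$, then for infinitely many $n$ one may pick a point of $S\cap D_n$ whose $b_n$-image lies in $F_n$; these points are distinct (the $D_n$ are disjoint) and lie in $S\cap \Psi\big((F_n)_n\big)$, so this intersection is infinite, i.e. $S\,{ii}\,\Psi\big((F_n)_n\big)$. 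Combining the two quotients yields $\mathbf{seq}(x,X)\gte \left(\left(\prod_n[\kappa_n]^{<\omega}\right)_\infty,i_\infty\right)$.

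I expect the only genuine obstacle to be the well-definedness of $\Phi$ into the $\infty$-part, that is, guaranteeing that a sequence converging to $x$ must meet infinitely many of the ``annuli'' $D_n$; this is precisely where first countability and the separation of $x$ from the rest of the space (ensuring $\bigcap_n B_n=\{x\}$, so that $S$ cannot be eventually trapped inside $\bigcap_n B_n$) are used. The morphism inequalities themselves are routine manipulations of disjoint unions of finite sets.
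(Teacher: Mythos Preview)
Your proof is correct and follows essentially the same route as the paper's: both identify $\mathbf{seq}(x,X)$ with $(\mathop{CS}(x,X),ii)$, pass to $\prod_n[D_n]^{<\omega}$ (you via explicit bijections $b_n$, the paper by simply declaring the obvious Tukey equivalence), and then use the pair of maps $S\mapsto (S\cap D_n)_n$ and $(F_n)_n\mapsto\bigcup_n F_n$ in both directions. Your treatment is in fact slightly more careful on one point the paper asserts without comment: the well-definedness of $\Phi$ into the $\infty$-part requires $\bigcap_n B_n=\{x\}$, and you correctly isolate the $T_1$ hypothesis (implicit in the paper's setting) needed for this.
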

\begin{proof}
    Recall $\mathbf{seq}(x,X)=(\mathop{CS}(X),ii)$. Note that $\left(\left(\prod_n [\kappa_n]^{<\omega}\right)_\infty,i_\infty \right)$ is clearly  Tukey equivalent to $\left(\left(\prod_n [D_n]^{<\omega}\right)_\infty,i_\infty \right)$.

    Define $\phi_+:\mathop{CS}(x,X) \to \prod_n [\kappa_n]^{<\omega}$ by $\phi_+(S)=(S\cap D_n)_n$, and $ \psi_-:\prod_n [\kappa_n]^{<\omega} \to \mathop{CS}(x,X)$ by $ \psi_-((F_n)_n)=\bigcup_n F_n$. Note they are both well-defined (each $S\cap D_n$ is a finite subset of $G_n$ and infinitely many are $\ne \emptyset$, and $\bigcup_n F_n$ is an infinite sequence converging to $x$).
    Suppose $ \psi_-((F_n)_n) ii S$. Then $\bigcup_n F_n \cap S$ is infinite. Let $A=\{n \in \omega : F_n \cap S \ne \emptyset\}$. As each $F_n \cap S$ is finite, $A$ is infinite, and for each $n$ from $A$, $\emptyset \ne F_n \cap S = F_n \cap (S \cap D_n)$ (as $F_n \subseteq D_n$), as required for $(F_n)_n i_\infty \phi_+(S)$.

    Define $\psi_+((F_n)_n)=\bigcup_n F_n$ and $\psi_-(S)=(S \cap D_n)_n$. Now if $\psi_-(S) i_\infty (G_n)_n$ then there is an infinite $A$ such that for every $n$ from $A$, $\emptyset \ne  G_n \cap (S \cap D_n)=G_n \cap S $. Since the $G_n \cap S$ are disjoint, and non-empty for $n$ in $A$, we see $S \cap \bigcup_n G_n = \bigcup_n (S \cap G_n)$ is infinite, and so $S ii \phi_+((G_n)_n)$, as required.
\end{proof}

Observe, if $x$ be a point of first countability in a  space $X$ then  $x$ is isolated if and only if $\mathbf{seq}(x,X) \gte \mathbf{0}$, and  $\mathbf{seq}(x,X) \gte \mathbf{1}$ if and only if $x$ is non-isolated and has a neighborhood homeomorphic to $\omega+1$, or equivalently, has a strictly decreasing local base, $(B_n)_n$, where the differences, $D_n=B_n \setminus B_{n+1}$ are all finite. We continue to deconstruct the possible types of $\mathbf{seq}(x,X)$ of a point $x$ in a space, $X$, based on the size of its neighborhoods.

For a space $X$ and $x$ in $X$ fix $U_x$ an open neighborhood of $x$ of minimal cardinality. Note that every neighborhood of $x$ contained in $U_x$ has cardinality $|U_x|$.
Let $\mathcal{S}_X=\{|U_x| : x \in X\}$.
 Let $X_\kappa$ (respectively, $X_{\le \kappa}$ and $X_{<\kappa}$) be the set of $x$ in $X$ with $|U_x|$ equal to $\kappa$ (respectively, $\le \kappa$ or $<\kappa$).
In the above notation the following is clear.
\begin{lem}
    The sets $X_{\le \kappa}$ and $X_{<\kappa}$ are open, and $X=\bigcup_{\kappa \in \mathcal{S}_X} X_{\le \kappa} = \bigcup_{\kappa \in \mathcal{S}_X} X_{\kappa}$.
\end{lem}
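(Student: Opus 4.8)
The plan is to isolate the single monotonicity fact on which all three assertions rest: if $y \in U_x$, then $U_x$ is itself an open neighborhood of $y$ (being open and containing $y$), so the minimality defining $U_y$ forces $|U_y| \le |U_x|$. Put differently, the function $x \mapsto |U_x|$ is non-increasing along the points of any minimal neighborhood, and this is the only ingredient needed.

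First I would prove openness of $X_{\le \kappa}$. Take $x \in X_{\le \kappa}$, so $|U_x| \le \kappa$. For every $y \in U_x$ the preceding observation gives $|U_y| \le |U_x| \le \kappa$, whence $y \in X_{\le \kappa}$. Thus $U_x$ is an open neighborhood of $x$ contained in $X_{\le \kappa}$, and since $x$ was arbitrary, $X_{\le \kappa}$ is open. The identical argument, replacing $\le$ by $<$ throughout, shows $X_{<\kappa}$ is open: for $x \in X_{<\kappa}$ and $y \in U_x$ we obtain $|U_y| \le |U_x| < \kappa$, so $U_x \subseteq X_{<\kappa}$.

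For the two covering equalities, the inclusions $\bigcup_{\kappa \in \mathcal{S}_X} X_{\le \kappa} \subseteq X$ and $\bigcup_{\kappa \in \mathcal{S}_X} X_\kappa \subseteq X$ are immediate from the definitions. Conversely, given any $x \in X$, set $\kappa = |U_x|$; by the definition of $\mathcal{S}_X$ we have $\kappa \in \mathcal{S}_X$, and then $x \in X_\kappa \subseteq X_{\le \kappa}$. Hence $x$ lies in both unions, giving $X \subseteq \bigcup_{\kappa \in \mathcal{S}_X} X_\kappa$ and $X \subseteq \bigcup_{\kappa \in \mathcal{S}_X} X_{\le \kappa}$, from which the equalities follow.

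There is no genuine obstacle here; the statement is flagged as ``clear'' precisely because the sole piece of content is the monotonicity observation of the first paragraph, after which openness and the covering are routine. The one point worth making explicit — and the only place where it matters that $U_x$ is a \emph{neighborhood} of minimal cardinality rather than merely a set — is that the membership $y \in U_x$ makes $U_x$ an admissible competitor in the defining minimization for $U_y$, which is exactly what delivers $|U_y| \le |U_x|$.
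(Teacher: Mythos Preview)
Your proof is correct and is exactly the natural verification the paper has in mind; the paper in fact omits the proof entirely, introducing the lemma with ``In the above notation the following is clear.'' Your monotonicity observation $|U_y|\le |U_x|$ for $y\in U_x$ is precisely the content, and the rest is routine, so there is nothing to add.
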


We now focus on the case when our space, $M$ say, is separable and metrizable.
\begin{lem} Let $M$ be separable and metrizable. Then:
(1) $\mathcal{S}_M$ is countable,
(2) if $\kappa \in \mathcal{S}_M$ then $\kappa \le |M_{\le \kappa}| \le \kappa . \aleph_0$, and
(3) $\sup \mathcal{S}_M = |M|$.
\end{lem}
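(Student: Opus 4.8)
The plan is to extract everything from the fact that a separable metrizable space $M$ is second countable, together with the observation already recorded that every neighborhood of $x$ contained in $U_x$ has cardinality $|U_x|$. Fix once and for all a countable base $\mathcal{B}=\{V_n : n \in \omega\}$ for $M$. The single engine driving all three parts is the claim that each local cardinality $|U_x|$ is already realized by a basic open set: given $x$, choose $V \in \mathcal{B}$ with $x \in V \subseteq U_x$; since $V$ is then a neighborhood of $x$ contained in $U_x$, the quoted observation gives $|V| = |U_x|$. I would establish this first and then read off (1), (2), (3) in turn.

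For (1), the preceding claim shows $\mathcal{S}_M \subseteq \{|V_n| : n \in \omega\}$, a countable set of cardinals, so $\mathcal{S}_M$ is countable. For (2), fix $\kappa \in \mathcal{S}_M$. For the lower bound, pick $x$ with $|U_x| = \kappa$; every $y \in U_x$ has $U_x$ as a neighborhood, so $|U_y| \le \kappa$ and hence $U_x \subseteq M_{\le\kappa}$, giving $|M_{\le\kappa}| \ge |U_x| = \kappa$. For the upper bound, for each $x \in M_{\le\kappa}$ the basic set $V$ with $x \in V \subseteq U_x$ satisfies $|V| \le \kappa$, so $M_{\le\kappa} \subseteq \bigcup\{V \in \mathcal{B} : |V| \le \kappa\}$; this is a union of at most countably many sets each of size $\le \kappa$, whence $|M_{\le\kappa}| \le \aleph_0 \cdot \kappa$.

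For (3) I would combine the two bounds. On the one hand each $\kappa = |U_x| \le |M|$, so $\sup \mathcal{S}_M \le |M|$. On the other hand, writing $M = \bigcup_{\kappa \in \mathcal{S}_M} M_{\le\kappa}$ (a countable union, by (1)) and applying (2), $|M| \le \aleph_0 \cdot \sup_{\kappa \in \mathcal{S}_M} |M_{\le\kappa}| \le \aleph_0 \cdot \sup \mathcal{S}_M$.

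The hard part — really the only subtlety — is that this upper bound carries a spurious factor of $\aleph_0$, which is absorbed precisely when $\sup \mathcal{S}_M$ is infinite. I would dispose of it by observing that in a metrizable (hence $T_1$) space a non-isolated point has every neighborhood infinite, so if $M$ is not discrete then some $|U_x| \ge \aleph_0$ and $\sup \mathcal{S}_M$ is infinite; then $\aleph_0 \cdot \sup \mathcal{S}_M = \sup \mathcal{S}_M$ and the two inequalities pinch to $|M| = \sup \mathcal{S}_M$. In particular, for every uncountable $M$ the equality holds outright, since $|M| \le \aleph_0 \cdot \sup \mathcal{S}_M$ already forces $\sup \mathcal{S}_M$ to be infinite. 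The only remaining case is $M$ discrete, which is degenerate (such $M$ is countable with $\mathcal{S}_M = \{1\}$ and trivial sequential structure) and plays no role in the sequel.
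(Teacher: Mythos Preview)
Your proof is correct and follows essentially the same approach as the paper: both fix a countable base, note that each minimal neighborhood cardinality $|U_x|$ is realized by a basic open set, and read off (1), (2), (3) from this. In fact your argument for (3) is more careful than the paper's, which simply asserts that (3) follows from (2) and the cover $M=\bigcup_{\kappa\in\mathcal{S}_M}M_{\le\kappa}$ without addressing the stray $\aleph_0$ factor; you correctly observe that the absorption requires $\sup\mathcal{S}_M\ge\aleph_0$, which holds whenever $M$ has a non-isolated point, and that the discrete case is a degenerate exception where (3) as stated literally fails but is irrelevant to the sequel.
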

\begin{proof}
Claim (1) is immediate once we note that we can take all minimal neighborhoods from a countable base for $M$.

For (2), as $\kappa$ is in $\mathcal{S}_M$, there is a point $x$ in $M_\kappa$ with minimal neighborhood $U_x$ where $|U_x|=\kappa$. Now every one of the $\kappa$-many points in $U_x$ are in $M_{\le \kappa}$. Hence $M_{\le \kappa}$ has at least $\kappa$-many elements. On the other hand, using minimal open sets from a countable base, we see $M_{\le \kappa}$ is covered by countably many (open) sets all of cardinality $\le \kappa$. So $|M_{\le \kappa}| \le \kappa . \aleph_0$.

Now (3) follows from (2) and $M=\bigcup_{\kappa \in \mathcal{S}_M} M_{\le \kappa}$.
\end{proof}

Separate the points of $M_\kappa$ in two as follows. Let $M_\kappa^+$ be all $x$ from $M_\kappa$ which have a neighborhood base $(B_n)_n$ such that every $D_n=B_n \setminus B_{n+1}$ has cardinality $\kappa$. Let $M_\kappa^-$ be $M_\kappa \setminus M_\kappa^+$.
Note that a point $x$ from $M_\kappa$ is in $M_\kappa^-$ if and only if it has a neighborhood base $(B_n)_n$ such that every $D_n=B_n \setminus B_{n+1}$ has cardinality $<\kappa$, and further observe we can suppose the $|D_n|$'s increase strictly up to $\kappa$, which must have countable cofinality.

\begin{lem} Let $M$ be separable metrizable.

(1)    If $\cof(\kappa) \ne \omega$ then $M_\kappa^-=\emptyset$ (so $M_\kappa=M_\kappa^+$) and $|M_\kappa|=\kappa.\aleph_0$.

(2) If $M_\kappa^- \ne \emptyset$ then  $M_\kappa^-$ is discrete, and so $|M_\kappa^-|$ is in $\omega+1$.
\end{lem}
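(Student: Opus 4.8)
The plan is to handle the two parts separately, leaning on two facts already in hand: the cardinal bounds from the earlier lemma, namely $\kappa \le |M_{\le\kappa}| \le \kappa\cdot\aleph_0$ for $\kappa \in \mathcal{S}_M$, and the observation immediately preceding the statement, that any point of $M_\kappa^-$ admits a local base whose shells $D_n$ are cardinals $<\kappa$ with $\sup_n|D_n|=\kappa$. Throughout I interpret the statement for infinite $\kappa \in \mathcal{S}_M$, the only substantive case; note $\kappa\cdot\aleph_0=\kappa$ there.

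For part (1), emptiness of $M_\kappa^-$ is immediate: a point $x \in M_\kappa^-$ would give $\kappa = \sup_n |D_n|$ with all $|D_n| < \kappa$, whence $\cof(\kappa)=\omega$, against hypothesis; so $M_\kappa^-=\emptyset$ and $M_\kappa = M_\kappa^+$. For the cardinality I would write $M_\kappa = M_{\le\kappa}\setminus M_{<\kappa}$ (a difference of open sets) and establish the key estimate $|M_{<\kappa}| < \kappa$. Since $\mathcal{S}_M$ is countable, $\mu := \sup\{\lambda \in \mathcal{S}_M : \lambda<\kappa\}$ is the supremum of countably many cardinals $<\kappa$, so $\cof(\kappa)\ne\omega$ forces $\mu<\kappa$; then $M_{<\kappa} = \bigcup\{M_{\le\lambda} : \lambda\in\mathcal{S}_M,\ \lambda<\kappa\}$ is a countable union of sets of size $\le \lambda\cdot\aleph_0 \le \max(\mu,\aleph_0)<\kappa$, so $|M_{<\kappa}|<\kappa$. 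Combining this with $|M_{\le\kappa}| = \kappa$ (from the earlier bounds) and the infinitude of $\kappa$ yields $|M_\kappa| = \kappa = \kappa\cdot\aleph_0$.

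For part (2) the heart is to show every point of $M_\kappa^-$ is isolated in $M_\kappa$; discreteness of $M_\kappa^-$ then follows at once, since an open set isolating $x$ in $M_\kappa$ also isolates it in the smaller set $M_\kappa^-$. Fix $x \in M_\kappa^-$ and a decreasing metric-ball local base $B_k = B(x,r_k)$, $r_k\downarrow 0$, with shells $D_k = B_k\setminus B_{k+1} = \{z : r_{k+1}\le d(x,z) < r_k\}$. Because every neighborhood of $x$ has size $\ge\kappa$ while $x\notin M_\kappa^+$, the identity $|B_k| = \sup_{j\ge k}|D_j|$ lets me argue that, after passing to a tail, all shells satisfy $|D_k|<\kappa$. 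Now take any $y\in B_0\setminus\{x\}$, say $y\in D_m$ with $\rho = d(x,y)\in[r_{m+1},r_m)$; choosing $\epsilon < \min(\rho - r_{m+2},\, r_m - \rho)$ places $B(y,\epsilon)$ inside $\{z : r_{m+2} < d(x,z) < r_m\} \subseteq D_m\cup D_{m+1}$, a set of size $<\kappa$. Thus $y$ has a neighborhood of size $<\kappa$, so $y\in M_{<\kappa}$ and $y\notin M_\kappa$; hence $B_0\cap M_\kappa = \{x\}$. Therefore $M_\kappa^-$ is discrete, and being a subspace of the (hereditarily Lindel\"{o}f) separable metrizable space $M$ it is countable, i.e. $|M_\kappa^-|\in\omega+1$.

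I expect the main obstacle to be the geometric step in part (2): controlling which shells a small ball $B(y,\epsilon)$ can meet. The annular structure of the $D_k$ makes this routine once one is careful with the boundary radii — why $B(y,\epsilon)$ lands in at most two consecutive shells — but it is precisely where the argument uses metrizability rather than mere first countability. A secondary delicate point is justifying that an $M_\kappa^-$ point has a ball base all of whose shells have size $<\kappa$ (not merely some base), which I would settle via the cardinality computation $|B_k|=\sup_{j\ge k}|D_j|$ together with the $M_\kappa^+$/$M_\kappa^-$ dichotomy. The cardinal arithmetic in part~(1) is routine but wants care for singular $\kappa$ (cofinality $\ne\omega$ yet possibly uncountable), where one only ever takes suprema of two, or of countably many cardinals bounded strictly below $\kappa$.
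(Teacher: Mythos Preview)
Your proof is correct. Part~(1) is essentially identical to the paper's argument. For part~(2) you take a somewhat different route: the paper argues by contradiction, supposing some $x\in M_\kappa^-$ is a limit of a sequence $S\subseteq M_\kappa^-$, choosing a base at $x$ so that $S$ meets the \emph{interior} of each shell $D_n$, and then observing that since $x\notin M_\kappa^+$ some shell of this chosen base has size $<\kappa$, giving a point of $S\subseteq M_\kappa$ with a neighborhood of size $<\kappa$---contradiction. You instead prove the stronger fact that each $x\in M_\kappa^-$ is isolated in all of $M_\kappa$, via an explicit metric computation showing every nearby point has a ball contained in two consecutive (small) shells. Your route yields more information; the paper's is slightly quicker and avoids the triangle-inequality bookkeeping.

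One small remark on your ``delicate point'': the cleanest justification that the metric-ball base has all shells of size $<\kappa$ after a tail is simply that if infinitely many shells $D_{k_i}$ had size $\kappa$, the subsequence $(B_{k_i})_i$ would be a base with all shells of size $\kappa$, witnessing $x\in M_\kappa^+$. The identity $|B_k|=\sup_{j\ge k}|D_j|$ you invoke is correct but by itself only gives $\sup_{j\ge k}|D_j|=\kappa$, which does not constrain the individual $|D_j|$; it is the $M_\kappa^+$/$M_\kappa^-$ dichotomy (which you do mention) that carries the weight.
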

\begin{proof}
    We have just observed that if $M_\kappa^- \ne \emptyset$ then $\kappa$ must have countable cofinality. So if $\cof(\kappa) \ne \omega$ then indeed $M_\kappa^-=\emptyset$. Now we know $|M_{\le \kappa}| \kappa$ , and we see $M_{<\kappa}$ is a countable union of sets (the $M_{\le \lambda}$ for $\lambda \in \mathcal{S}_M\setminus\{\kappa\}$) all of cardinality $<\kappa$, and thus $|M_\kappa|=\kappa$.

    Now suppose, for a contradiction, $M_\kappa^- \ne \emptyset$ but is not discrete, say $x$ is in  $M_\kappa^-$ and is the limit of the sequence $S$ also in $M_\kappa^-$.
    Pick a decreasing neighborhood base $(B_n)_n$ of $x$ so that $B_1=U_x$ and, for each $n$,  $S$ meets the interior of $D_n=B_n \setminus B_{n+1}$.
    As $x$ is not in $M_\kappa^+$, we can find an $n$ and $x'$ in $S$ so that $|D_n|<\kappa$ and $x'$ is in the interior of $D_n$. But $x'$ is in $M_\kappa$, so has no neighborhood of size $<\kappa$, contradiction.
\end{proof}

\subsection{$\mathbf{seq}(\mathcal{M})$ - Representation, Realization, and Structure \& Number of Types}
By $\mathbf{seq}(\mathcal{M})$ we mean the  Tukey types (equivalence classes) of $\mathbf{seq}(M)$, where $M$ is separable metrizable, ordered by $\gtq$.

Recall, from Section~\ref{ss:ssr}, the relations  $P(\kappa)=((([\kappa]^{<\omega})^\omega)_\infty, i_\infty)$,  and $Q(\kappa)=\mathbf{0}=S(\kappa)$ if $\cof(\kappa) \ne \omega$, and
otherwise select a sequence, $(\kappa_n)_n$, strictly increasing up to $\kappa$, and then $Q(\kappa)=(( \prod_n [\kappa_n]^{<\omega}    )_\infty,i_\infty)$, and $S(\kappa)=\sum_n P(\kappa_n) \times (\kappa_n,=)$.

\begin{thm}[Representation]\label{th:seq_rep} Let $M$ be separable metrizable, $\kappa=|M|$.

Then $\mathbf{seq}(M) \gte P(\kappa) \times (\lambda_M,=) + Q(\kappa) \times (\mu_M,=) + S(\kappa)$,
where $\lambda_M=|M_\kappa^+|$ is  $\le \kappa$ and $\mu_M=|M_\kappa^-|$ is in $\omega+1$.
\end{thm}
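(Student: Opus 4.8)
The plan is to assemble the equivalence from the pointwise decomposition $\mathbf{seq}(M) \gte \sum_{x \in M} \mathbf{seq}(x,M)$ already proved, together with the local computation identifying each summand. Concretely: for $x$ isolated $\mathbf{seq}(x,M) \gte \mathbf{0}$; for $x \in M_\kappa^+$ I would take the local base whose successive differences $D_n$ all have size $\kappa$, so the local computation gives $\mathbf{seq}(x,M) \gte P(\kappa)$; and for $x \in M_\kappa^-$ I would take a base with $|D_n|$ strictly increasing to $\kappa$, giving $\mathbf{seq}(x,M) \gte \left(\left(\prod_n [|D_n|]^{<\omega}\right)_\infty, i_\infty\right) \gte Q(\kappa)$, the second step being the well-definedness of $Q(\kappa)$. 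The same computation at $x \in M_\lambda$ with $\lambda < \kappa$ yields $\mathbf{seq}(x,M) \gte P(\lambda)$ or $Q(\lambda)$. Splitting $M$ into the disjoint pieces $M_\kappa^+$, $M_\kappa^-$, $M_{<\kappa}$ then presents $\sum_{x} \mathbf{seq}(x,M)$ as a sum $\mathbf{P} + \mathbf{Q} + \mathbf{R}$ of the three corresponding sub-sums.

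The two top pieces are immediate. Since every $x \in M_\kappa^+$ has $\mathbf{seq}(x,M) \gte P(\kappa)$, Lemma~\ref{l:sum2} gives $\mathbf{P} \gte \sum_{M_\kappa^+} P(\kappa)$, and Lemma~\ref{big_sum=product} rewrites this as $P(\kappa) \times (\lambda_M,=)$ with $\lambda_M = |M_\kappa^+|$; identically $\mathbf{Q} \gte Q(\kappa) \times (\mu_M,=)$ with $\mu_M = |M_\kappa^-| \in \omega+1$. By monotonicity of $+$ (Lemma~\ref{l:sum2}) it then remains to match the bottom piece $\mathbf{R} = \sum_{x \in M_{<\kappa}} \mathbf{seq}(x,M)$ with $S(\kappa)$, up to the absorption afforded by the top.

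Here I would first record the order facts among the special relations: $P(\lambda) \gtq P(\mu)$ and $P(\lambda) \gtq Q(\mu)$ whenever $\mu \le \lambda$ (projection/inclusion morphisms between the nested relations $([\mu]^{<\omega})^\omega \subseteq ([\lambda]^{<\omega})^\omega$), and the absorption $P(\kappa) \times (\kappa,=) \gtq S(\kappa)$ when $\cof(\kappa)=\omega$: each summand $P(\kappa_n) \times (\kappa_n,=)$ is Tukey below $P(\kappa)\times(\kappa,=)$, so Lemmas~\ref{l:sum2},~\ref{big_sum=product} and $\kappa\cdot\aleph_0=\kappa$ dominate the whole sum. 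These give the upper bound $S(\kappa) \gtq \mathbf{R}$ (each $x \in M_{<\kappa}$ sits in some $M_\lambda$ with $\lambda \le \kappa_n$, so $\mathbf{seq}(x,M)$ is dominated by one summand of $S(\kappa)$ after interleaving and collecting the countably many strata), and let me split the whole statement into two cases. If $\lambda_M = \kappa$ (automatic when $\cof(\kappa)\neq\omega$, where moreover $S(\kappa)=Q(\kappa)=\mathbf{0}$), then $P(\kappa)\times(\kappa,=)$ absorbs $\mathbf{Q}$, $\mathbf{R}$ and $S(\kappa)$ alike and both sides collapse to $P(\kappa)\times(\kappa,=)$; the lower bound $\mathbf{seq}(M)\gtq P(\kappa)\times(\kappa,=)$ comes from restricting to $M_\kappa^+$ via Lemma~\ref{l:sum1}. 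The remaining case is $\lambda_M < \kappa$, which forces $\cof(\kappa)=\omega$ and $|M_{<\kappa}|=\kappa$, so that $\mathcal{S}_M \cap \kappa$ is cofinal in $\kappa$.

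The main obstacle is exactly this last case: the lower bound $\mathbf{R} \gtq S(\kappa)$, i.e.\ realizing $S(\kappa)$ from inside $M_{<\kappa}$. I would choose the cofinal sequence defining $S(\kappa)$ out of $\mathcal{S}_M \cap \kappa$ and, for each $n$, exhibit a copy of $P(\kappa_n)\times(\kappa_n,=)$ inside $M_{<\kappa}$; by Lemmas~\ref{l:sum1} and~\ref{l:sum2} these assemble to $S(\kappa)$. The delicate point is producing the multiplicity $(\kappa_n,=)$: one needs, cofinally often, a stratum $M_{\kappa_n}^+$ of full size $\kappa_n$, so that $\sum_{M_{\kappa_n}^+}\mathbf{seq} \gte P(\kappa_n)\times(\kappa_n,=)$. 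This is precisely where separability is essential, and where a naive argument collapses: a \emph{non}-separable hedgehog $J(\kappa)$ has $\mathbf{seq} \gte P(\kappa)$ with $\lambda_M=1$, which would contradict the representation once $\cof(\kappa)=\omega$ since $P(\kappa) \not\gtq S(\kappa)$ (indeed $P(\kappa)$ is countably directed, hence $P(\kappa)\not\gtq (2,=)$, whereas $S(\kappa)\gtq(2,=)$). Separability caps the isolated points at $\aleph_0$ and thereby forces the $\kappa$-sized neighborhoods near the top to be filled, cofinally, with $\kappa_n$-many genuinely non-isolated points carrying their own convergent sequences. Showing that this forced richness delivers full strata --- after replacing the raw $\mathcal{S}_M$-levels by a well-chosen cofinal subsequence and counting via $|M_{\le\lambda}| = \lambda$ and $|M_\lambda^-|\le\aleph_0$ --- is the technical heart; everything else is bookkeeping with Lemmas~\ref{big_sum=product}--\ref{l:sum2}.
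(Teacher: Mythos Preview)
Your plan is correct and matches the paper's proof: the same three-part decomposition into $M_\kappa^+$, $M_\kappa^-$, $M_{<\kappa}$, the same case split on whether $\lambda_M=\kappa$, and the same identification of $\mathbf{R}\gtq S(\kappa)$ as the one nontrivial step when $\lambda_M<\kappa$. For that ``technical heart'' you leave open, the paper's device is cleaner than the counting you sketch: choose the cofinal sequence $(\kappa_n)_n$ from $\mathcal{S}_M'=\mathcal{S}_M\setminus\{\kappa\}$ so that every $\kappa_n$ has \emph{uncountable} cofinality; then the earlier lemma gives $M_{\kappa_n}^-=\emptyset$ and $|M_{\kappa_n}|=\kappa_n$ outright, so $\sum_{x\in M_{\kappa_n}}\mathbf{seq}(x,M)\gte P(\kappa_n)\times(\kappa_n,=)$ on the nose, and Lemma~\ref{l:sum1} assembles these into $S(\kappa)$. (That such $\kappa_n$ exist cofinally in $\mathcal{S}_M'$ is a one-line well-foundedness observation: any $\lambda\in\mathcal{S}_M'$ of countable cofinality is a limit of smaller members of $\mathcal{S}_M'$.)
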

\begin{proof}
Let  $P=\sum_{x \in M_\kappa^+} \mathbf{seq}(x,M)$,  $Q=\sum_{x \in M_\kappa^-} \mathbf{seq}(x,M)$ and $S=\sum_{x \in M_{<\kappa}} \mathbf{seq}(x,M)$. Then $\mathbf{seq}(M)=P+Q+S$, and $P\gte P(\kappa) \times (\lambda_M,=)$ and $Q \gte Q(\kappa) \times (\mu_M,=)$.

We start with the case when $\lambda_M=\kappa$. Then both $\mathbf{seq}(M) = P+Q+S$ and $P(\kappa) \times (\lambda_M,=) + Q(\kappa) \times (\mu_M,=) + S(\kappa)$ are $\gtq P=P(\kappa) \times (\kappa,=)$.
So it suffices to show $P(\kappa) \times (\kappa,=) \gtq P+Q+S$ and $P(\kappa) \times (\kappa,=) \gtq  P(\kappa) \times (\lambda_M,=) + Q(\kappa) \times (\mu_M,=) + S(\kappa)$.
But there are no more than $\kappa$ points of $M$ not in $M_\kappa^+$, and each such point $x$ satisfies $P(\kappa) \gtq \mathbf{seq}(x,M)$. Hence, working term-by-term, $P(\kappa)\times (\kappa,=) \gtq Q+S$. Similarly, $P(\kappa)\times (\kappa,=) \gtq Q(\kappa) \times (\mu_M,=) + S(\kappa)$.
Since $P(\kappa) \times (\kappa,=) \gte P(\kappa) \times (\kappa,=)+P(\kappa)\times (\kappa,=)$ the claim follows.

Now suppose $\lambda_M<\kappa$. Then $\kappa$ has countable cofinality.
We show $S \gte S(\kappa)$, which completes the proof.

Let $\mathcal{S}_M'=\mathcal{S}_M \setminus \{\kappa\}$, and note $\sup \mathcal{S}_M'=\kappa$. Enumerate $\mathcal{S}_M'$ as $\lambda_1, \lambda_2, \ldots$. Pick a strictly increasing sequence of cardinals $(\kappa_n)_n$ such that $\kappa_n < \kappa$, $\kappa_n \ge \lambda_1,\ldots,\lambda_n$ and $\kappa_n \ge |M_{\lambda_1}|, \ldots, |M_{\lambda_n}|$. Note $\lim_n \kappa_n=\kappa$.
Now, using this particular sequence in the definition of $S(\kappa)$, we see $S(\kappa) = \sum_n P(\kappa_n) \times (\kappa_n,=) \gtq \sum_{\lambda \in \mathcal{S}_M'} \sum \{\mathbf{seq}(x,M) : x \in M_{\lambda}\}=S$, as required.

For the reverse, first note that members of $\mathcal{S}_M'$ with countable cofinality are limits of elements of $\mathcal{S}_M'$ which do not have countable cofinality. So pick a strictly increasing sequence $(\kappa_n)_n$ inside $\mathcal{S}_M'$ converging to $\kappa$.
Since each $\kappa_n$ does not have countable cofinality we know $|M_{\kappa_n}|=\kappa_n$ and for every $x$ from $M_{\kappa_n}$ we have $\mathbf{seq}(x,M)\gte P(\kappa_n)$.
Thus clearly $S \gtq \sum_n \sum \{\mathbf{seq}(x,M) : x \in M_{\kappa_n}\} = \sum_n P(\kappa_n) \times (\kappa_n,=) = S(\kappa)$, taking this particular sequence in the definition of $S(\kappa)$.
\end{proof}

\begin{thm}[Realization]\label{th:seq_real} \

For every cardinal $\kappa \le \mathfrak{c}$ there is a separable metrizable space $M=M(\kappa)$ such that $\mathbf{seq}(M) \gte P(\kappa) \times (\kappa,=)$.

    For every cardinal $\kappa \le \mathfrak{c}$ of countable cofinality and $\lambda < \kappa$ and $\mu \in \omega+1$ there is a separable metrizable space $N=N(\kappa,\lambda,\mu)$ such that $\mathbf{seq}(N) \gte P(\kappa) \times (\lambda,=) + Q(\kappa) \times (\mu,=) + S(\kappa)$.
\end{thm}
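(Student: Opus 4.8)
The plan is to reduce both statements to the Representation Theorem (Theorem~\ref{th:seq_rep}). Since that result computes $\mathbf{seq}(M)$ purely from the cardinal data $\kappa=|M|$, $\lambda_M=|M_\kappa^+|$ and $\mu_M=|M_\kappa^-|$, it suffices to \emph{construct}, for each prescribed triple, a separable metrizable space with exactly those invariants and all remaining points in $M_{<\kappa}$. I will use repeatedly that for any infinite $\theta\le\mathfrak{c}$ there is a crowded $C\subseteq 2^\omega$ of size $\theta$ in which every nonempty clopen trace has size $\theta$ (build $C$ by placing $\theta$ points inside each of the countably many basic clopen sets). For the first statement I take $M(\kappa)=C$ with $\theta=\kappa$: every $x\in C$ then has minimal neighbourhood of size $\kappa$ and a clopen local base whose successive differences all have size $\kappa$, so $C=C_\kappa^+$, i.e. $\lambda_{M(\kappa)}=\kappa$ (and $\mu=0$). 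The $\lambda_M=\kappa$ case of Theorem~\ref{th:seq_rep} collapses to $P(\kappa)\times(\kappa,=)$, establishing the first claim for every $\kappa\le\mathfrak{c}$, regardless of cofinality.

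For the second statement I assemble $N$ from pieces glued by a (necessarily at most countable) topological disjoint sum; as summands are clopen, minimal neighbourhoods and the $M_\kappa^+/M_\kappa^-/M_{<\kappa}$ classification are computed inside each summand. Fix successor cardinals $\kappa_n\nearrow\kappa$. The background piece $B=\bigoplus_n M(\kappa_n)$ has size $\kappa$, no points of $N_\kappa$, and forces $\mathcal{S}_N$ to be cofinal in $\kappa$ through cardinals of uncountable cofinality. A single ``$Q$-apex'' space $Q^*_\kappa$ is a fan with one apex $q$ and levels $D_m$ that are crowded of size $\kappa_m$ with all traces of size $\kappa_m$: then $q\in N_\kappa^-$ (its levels increase strictly to $\kappa$, so no all-$\kappa$ local base exists), while each level point has minimal neighbourhood $\kappa_m<\kappa$; taking $\mu\le\omega$ disjoint copies contributes exactly $\mu$ points to $N_\kappa^-$. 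A single ``$P$-apex'' space $P^*_\kappa$ is a fan whose levels all have size $\kappa$ but are split into separated crowded clusters of sizes $\kappa_j$, so its apex lies in $N_\kappa^+$ while the cluster points have minimal neighbourhood $\kappa_j<\kappa$. For countable $\lambda$ I simply take $\lambda$ disjoint copies of $P^*_\kappa$.

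The only delicate construction is the $P$-part when $\lambda$ is uncountable, since a separable metrizable space cannot contain an uncountable closed discrete (hence disjointly summed) family of apexes; I therefore realize all $\lambda$ apexes inside one crowded base, living in $2^\omega\times2^\omega$. Let $A\subseteq 2^\omega$ be crowded of size $\lambda$ with all traces of size $\lambda$, set the apexes to be $(a,\bar0)$ for $a\in A$ (with $\bar0$ the zero sequence), and attach over every apex the \emph{same} fan, independent of $a$: a fan point is $(a,s)$ with $s$ ranging over a crowded size-$\kappa_j$ set $\Sigma_{m,j}$ (all traces of size $\kappa_j$) contained in a clopen region $W_{m,j}\subseteq\mathrm{ball}(\bar0,2^{-m})$, the $W_{m,j}$ pairwise disjoint. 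The crucial choice is $\lambda<\kappa_j\nearrow\kappa$. For small radius a neighbourhood of a fan point $(a,s)$, $s\in\Sigma_{m,j}$, meets $N_P$ in (trace of $A$ at $a$)$\,\times\,$(trace of $\Sigma_{m,j}$ at $s$), of size $\lambda\cdot\kappa_j=\kappa_j<\kappa$, so every fan point lies in $N_{<\kappa}$; whereas a small neighbourhood of an apex $(a,\bar0)$ captures, over the $\lambda$ nearby apexes, all sufficiently high levels, of size $\lambda\cdot\kappa=\kappa$, giving minimal neighbourhood $\kappa$ together with a local base whose differences are full levels of size $\kappa$, so each apex lies in $N_\kappa^+$. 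Hence $N_\kappa^+=A$ has size exactly $\lambda$, $N_\kappa^-=\emptyset$, and all other points are in $N_{<\kappa}$.

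Combining, I set $N=B\oplus N_P\oplus\bigoplus_{j<\mu}Q^*_\kappa$, where $N_P$ is the crowded-base space above (or $\bigoplus_{i<\lambda}P^*_\kappa$ when $\lambda\le\omega$); this is a sum of at most countably many separable metrizable spaces, hence separable metrizable. By the summand-wise computation, $|N|=\kappa$, while $N_\kappa^+$ has size $\lambda$ and $N_\kappa^-$ has size $\mu$, every remaining point lying in $N_{<\kappa}$ (the background $B$ also covers the degenerate case $\lambda=\mu=0$). Feeding these invariants into Theorem~\ref{th:seq_rep} yields $\mathbf{seq}(N)\gte P(\kappa)\times(\lambda,=)+Q(\kappa)\times(\mu,=)+S(\kappa)$, as required. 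I expect the main obstacle to be precisely the uncountable-$\lambda$ construction: one must balance two competing cardinals so that a crowded base of size $\lambda$ inflates apex neighbourhoods up to $\kappa$ and yet, because $\lambda<\kappa_j$, leaves all fan (cluster) points strictly below $\kappa$ — this is exactly where the hypotheses $\lambda<\kappa$ and $\cof(\kappa)=\omega$ enter.
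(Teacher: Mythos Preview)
Your proof is correct and follows the same strategy as the paper: build a separable metrizable space with prescribed invariants $(|M|,|M_\kappa^+|,|M_\kappa^-|)$ and then read off $\mathbf{seq}(M)$ via Theorem~\ref{th:seq_rep}. Your $M(\kappa)$ and your $Q$-piece are essentially identical to the paper's.

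The one place you work harder than necessary is the $P$-piece. The paper treats all $\lambda<\kappa$ uniformly with a single space
\[
T=(P\times\{0\})\cup\bigcup_n\bigl(M(\kappa_n)\times\{1/n\}\bigr)\subseteq\mathbb{C}\times\bigl(\{0\}\cup\{1/n:n\in\mathbb{N}\}\bigr),
\]
where $P$ is \emph{any} $\lambda$-sized subset of the Cantor set (no crowdedness needed). Because each $M(\kappa_n)$ is dense in $\mathbb{C}$, every neighborhood of a point $(p,0)$ picks up a full $\kappa_n$-sized trace on each sufficiently high level, so its size and the sizes of successive differences are all $\kappa$; points on level $1/n$ have neighborhoods trapped in $M(\kappa_n)\times\{1/n\}$ of size $\kappa_n<\kappa$. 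Thus $T_\kappa^+=P\times\{0\}$ and $T_\kappa^-=\emptyset$ without any case split on $|\lambda|$, any cluster decomposition of levels, or any balancing inequality $\lambda<\kappa_j$. Your constructions are correct, but this simpler $T$ replaces both your $\bigoplus_{i<\lambda}P^*_\kappa$ and your crowded-base $N_P$ in one stroke.
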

\begin{proof} Our examples of separable metrizable spaces below are all zero dimensional, and so homeomorphic to subspaces of the Cantor set, $\mathbb{C}$. Start by taking any $\kappa \le \mathbf{c}$. Fix a countable base $\mathcal{B}$ for $\mathbb{C}$. For each $B$ from $\mathcal{B}$ select $M_B$ from $[B]^\kappa$. Let $M(\kappa)=M=\bigcup_{B \in \mathcal{B}} M_B$. Then $|M|=\kappa$ and every point of $M$ has a strictly decreasing neighborhood base, $(B_n)_n$, all of whose differences, $B_n \setminus B_{n+1}$ have size $\kappa$. So $\mathbf{seq}(M)=\sum_{x \in M} \mathbf{seq}(x,M)=P(\kappa) \times (\kappa,=)$, as required.
For use below, note that $M(\kappa)$ is dense in the Cantor set.

Now suppose $\kappa \le \mathfrak{c}$ has countable cofinality, and fix a strictly increasing sequence, $(\kappa_n)_n$, of regular cardinals with limit $\kappa$. Also suppose we have $\lambda < \kappa$ and $\mu$ in $\omega+1$.
Let $Q_1$ be $\bigoplus_n M(\kappa_n) \cup \{\ast\}$, where the $N$th basic neighborhood of $\ast$ is $\bigoplus_{n\ge N} M(\kappa_n) \cup \{\ast\}$. Let $Q$ be the disjoint sum of $\mu$-many copies of $Q_1$. Note $Q(\kappa) + S(\kappa) \gtq \mathbf{seq}(Q) \gtq Q(\kappa)$.
Let $P$ be a subset of $M(\kappa)$ of size $\lambda$. Let $T$ be the subspace of $\mathbb{C} \times (\{0\} \cup \{1/n : n \in \mathbb{N}\})$ given by $T=(P \times \{0\}) \cup \bigcup_n (M(\kappa_n) \times \{1/n\})$.
Note, because each $M(\kappa_n)$ is dense in $\mathbb{C}$, and taking, for example, standard $1/n$-balls as basic neighborhoods, that each point in $P \times \{0\}$ has a neighborhood base with differences all equal to $\kappa$. Then $|T|=\kappa$ and $\mathbf{seq}(T) \gte P(\kappa) \times (\lambda,=)  + S(\kappa)$.
Let $N(\kappa,\lambda,\mu)=N = T \oplus Q$.  Then $|N|=\kappa$ and
it follows from the  Tukey calculations above that $\mathbf{seq}(N)=P(\kappa) \times (\lambda,=) + Q(\kappa) \times (\mu,=) + S(\kappa)$, as desired.
\end{proof}

%XXXX Just here temporarily. Will get absorbed into `G-T Structure'
%\begin{cor} Let $M$ be a countable and first countable. Let $\kappa_1$ be the number of points $x$ in $M$ such that $x$ is non-isolated and has a neighborhood homeomorphic to $\omega+1$. Let $\kappa_2$ be the number of points $x$ in $M$ such that $x$ is non-isolated and does not have a neighborhood homeomorphic to $\omega+1$.
%Then

%(0) $\kappa_1$ and $\kappa_2$ are in $\{0,1,\ldots\} \cup \{\aleph_0\}$.

%(1) $\mathbf{seq}(M) \gte \mathbf{1} \times (\kappa_1,=) + (\omom,\le_\infty) \times (\kappa_2,=)$.

%(2) If $\kappa_2=\aleph_0$ then $\mathbf{seq}(M)$ simplifies to $(\omom,\le_\infty) \times (\kappa_2,=)$.
%\end{cor}

Let $\Delta=\{(\lambda,\mu) : 0 \le \lambda < \omega$ and $ \mu \le \omega\}$, partially ordered by $\succeq$ where $(\lambda,\mu) \succeq (\lambda',\mu')$ if and only if $\lambda \ge \lambda'$ amd $\lambda+\mu \ge \lambda'+\mu'$.
The next theorem says that the  Tukey classes of $\mathbf{seq}(M)$, where $M$ is separable metrizable are ranked by $|M|=\aleph_\alpha$, there is a unique class if $|M|$ does not have countable cofinality, and otherwise is parametrized by $\Delta$ followed by $\alpha+1$.

\begin{thm}[ Tukey Structure]\label{th:seq_gt}
Let $\mathbf{seq}_\kappa(\mathcal{M})$ be all  Tukey classes of $\mathbf{seq}(M)$ where $M$ is separable metrizable  of infinite cardinality $\kappa$.

\smallskip

If  $\mathbf{seq}(M)$ is in $\mathbf{seq}_\mu(\mathcal{M})$ and $\mathbf{seq}(N)$ is in $\mathbf{seq}_\nu(\mathcal{M})$, where $\mu < \nu$, then $\mathbf{seq}(M) <_{T} \mathbf{seq}(N)$.

\medskip

\noindent \textbf{Case $\kappa=\aleph_0$:} \ The  Tukey classes in $\mathbf{seq}_\kappa(\mathcal{M})$ are parametrized by $\Delta+1$, say $\mathbf{seq}(M(\lambda,\mu))$, for $(\lambda,\mu)$ in $\Delta$, and an additional class $\mathbf{seq}(N)$. Here $\mathbf{seq}(M(\lambda,\mu)) \gte (\omom,\le_\infty) \times (\lambda,=) + (\mu,=)$,  $\mathbf{seq}(N)\gte (\omom,\le_\infty) \times (\omega,=)$ is an upper bound of all $\mathbf{seq}(M(\lambda,\mu))$, and $(\lambda,\mu) \succeq (\lambda',\mu')$ if and only if $\mathbf{seq}(M(\lambda,\mu)) \gtq \mathbf{seq}(M(\lambda',\mu'))$.

\smallskip

\noindent \textbf{Case $\cof(\kappa) \ne \omega$:} \ There is a unique   Tukey class in $\mathbf{seq}_\kappa(\mathcal{M})$, with type $P(\kappa) \times (\kappa,=)$.

\smallskip

\noindent \textbf{Case $\kappa=\aleph_\alpha>\aleph_0$ and $\cof(\kappa) =  \omega$:} \ The  Tukey classes in $\mathbf{seq}_\kappa(\mathcal{M})$ are parametrized by $\Delta+(\alpha+1)$, say $\mathbf{seq}(M(\lambda,\mu))$, for $(\lambda,\mu)$ in $\Delta$, and additional classes $\mathbf{seq}(N_\beta)$ for $\beta \le \alpha$. Here $\mathbf{seq}(M(\lambda,\mu)) \gte P(\kappa)\times (\lambda,=) + Q(\kappa)\times (\mu,=) + S(\kappa)$,
$\mathbf{seq}(N_\beta)\gte P(\kappa)\times (\aleph_\beta,=) + S(\kappa)$ for $\beta < \alpha$ and  $\mathbf{seq}(N_\alpha)\gte P(\kappa)\times (\kappa,=)$. Where $\alpha \ge \beta \ge \gamma$ implies $\mathbf{seq}(N_\beta) \gtq \mathbf{seq}(N_\gamma)$, $\mathbf{seq}(N_0)$
is an upper bound of all $\mathbf{seq}(M(\lambda,\mu))$, and $(\lambda,\mu) \succeq (\lambda',\mu')$ implies $\mathbf{seq}(M(\lambda,\mu)) \gtq \mathbf{seq}(M(\lambda',\mu'))$.
\end{thm}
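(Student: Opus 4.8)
The plan is to combine the Representation Theorem~\ref{th:seq_rep} --- which shows that for $|M|=\kappa$ the type $\mathbf{seq}(M)$ is determined by the pair $(\lambda_M,\mu_M)=(|M_\kappa^+|,|M_\kappa^-|)$ via $\mathbf{seq}(M)\gte P(\kappa)\times(\lambda_M,=)+Q(\kappa)\times(\mu_M,=)+S(\kappa)$ --- with the Realization Theorem~\ref{th:seq_real}, which produces a space for each admissible parameter. Together these pin down the \emph{set} of Tukey types at each level $\kappa$; what remains is to establish the asserted $\gtq$-relations and enough strictness. Throughout I will use the elementary domination facts $P(\nu)\gtq P(\mu)$ and $(\nu,=)\gtq(\mu,=)$ for $\mu\le\nu$ (inclusion and projection) and $P(\kappa)\gtq Q(\kappa)$ (as each $[\kappa_n]^{<\omega}\subseteq[\kappa]^{<\omega}$), together with the sum-calculus of Lemmas~\ref{big_sum=product},~\ref{l:sum1} and~\ref{l:sum2}.

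For the cross-cardinality statement fix $|M|=\mu<\nu=|N|$; as both are infinite, $\nu>\aleph_0$. First, $P(\mu)\times(\mu,=)$ is the largest type at level $\mu$: by Representation each summand of $\mathbf{seq}(M)$ is dominated by $P(\mu)\times(\mu,=)$, using $P(\mu)\gtq Q(\mu)$, $P(\mu)\gtq P(\mu_n)$ for the $\mu_n<\mu$ occurring in $S(\mu)$, and the absorption $P(\mu)\times(\mu,=)\gte P(\mu)\times(\mu,=)+P(\mu)\times(\mu,=)+P(\mu)\times(\mu,=)$ coming from $\mu=\mu+\mu+\mu$. Next, every $\mathbf{seq}(N)$ with $|N|=\nu$ satisfies $\mathbf{seq}(N)\gtq P(\mu)\times(\mu,=)$: if $\cof\nu\ne\omega$ this is because $\mathbf{seq}(N)\gte P(\nu)\times(\nu,=)$, and if $\cof\nu=\omega$ because $\mathbf{seq}(N)\gtq S(\nu)\gtq P(\nu_n)\times(\nu_n,=)\gtq P(\mu)\times(\mu,=)$ for any $\nu_n\ge\mu$ in the defining sequence of $S(\nu)$. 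Hence $\mathbf{seq}(N)\gtq\mathbf{seq}(M)$. Strictness follows from Lemma~\ref{l:inj}: a quotient $\mathbf{seq}(M)\gtq\mathbf{seq}(N)$ would inject $N'$ into $M'$, but in a separable metrizable space the isolated points form a discrete, hence countable, set, so $|N'|=\nu>\mu\ge|M'|$, a contradiction.

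For a fixed $\kappa$ the three cases are handled in turn. If $\cof\kappa\ne\omega$ then $M_\kappa^-=\emptyset$ and $|M_\kappa|=\kappa$, so $\lambda_M=\kappa$, $\mu_M=0$ and $Q(\kappa)=S(\kappa)=\mathbf0$; Representation collapses to the single type $P(\kappa)\times(\kappa,=)$, realized by Theorem~\ref{th:seq_real}. If $\kappa=\aleph_0$ then $S=\mathbf0$ (the points of $M_{<\aleph_0}$ are isolated), while $P(\aleph_0)\gte(\omom,\le_\infty)$ and $Q(\aleph_0)\gte\mathbf1$ by Lemma~\ref{l:small_Pk}, so $\mathbf{seq}(M)\gte(\omom,\le_\infty)\times(\lambda_M,=)+(\mu_M,=)$; finite $\lambda_M$ gives the $\Delta$-family, realized by $\bigoplus_{<\lambda}\mathbb F\oplus\bigoplus_{<\mu}(\omega+1)$, and $\lambda_M=\aleph_0$ gives the single top type $(\omom,\le_\infty)\times(\omega,=)$. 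If $\kappa=\aleph_\alpha>\aleph_0$ with $\cof\kappa=\omega$, finite $\lambda_M$ again gives the $\Delta$-family $P(\kappa)\times(\lambda,=)+Q(\kappa)\times(\mu,=)+S(\kappa)$, an infinite $\lambda_M=\aleph_\beta$ ($\beta\le\alpha$) absorbs the $Q$-term (since $P(\kappa)\gtq Q(\kappa)$ and $\aleph_\beta\ge\mu$) to give $\mathbf{seq}(N_\beta)$, and $\lambda_M=\kappa$ further absorbs $S(\kappa)$ to give $P(\kappa)\times(\kappa,=)$; this yields the parametrization by $\Delta+(\alpha+1)$. The positive orderings are direct: $N_\beta\gtq N_\gamma$ for $\beta\ge\gamma$ by Lemma~\ref{l:sum1}; $N_0$ bounds every $M(\lambda,\mu)$ because $P(\kappa)\times(\aleph_0,=)$ absorbs both $P(\kappa)\times(\lambda,=)$ and $Q(\kappa)\times(\mu,=)$; and $(\lambda,\mu)\succeq(\lambda',\mu')\Rightarrow\mathbf{seq}(M(\lambda,\mu))\gtq\mathbf{seq}(M(\lambda',\mu'))$ by assigning the $\lambda'$ target fat summands to distinct source fat summands and the $\mu'$ target thin summands to the remaining $(\lambda-\lambda')+\mu\ge\mu'$ source summands, via Lemmas~\ref{l:sum1} and~\ref{l:sum2}.

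The delicate point, which I expect to be the main obstacle, is the strictness in the $\aleph_0$ case, i.e. the converse giving the \emph{iff}: that $\mathbf{seq}(M(\lambda,\mu))\gtq\mathbf{seq}(M(\lambda',\mu'))$ forces $(\lambda,\mu)\succeq(\lambda',\mu')$. The inequality $\lambda+\mu\ge\lambda'+\mu'$ is immediate from Lemma~\ref{l:inj} (it counts non-isolated points). To extract $\lambda\ge\lambda'$ I would introduce a Tukey-monotone invariant counting exactly the $(\omom,\le_\infty)$-summands: for a relation $\mathbf C$ let $r(\mathbf C)$ be the least $n$ with $C_-=D_1\cup\dots\cup D_n\cup\bigcup_k B_k$, where each $D_l$ is \emph{$\sigma$-directed} (every countable subset bounded) and each $B_k$ is bounded. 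Monotonicity, $\mathbf A\gtq\mathbf B\Rightarrow r(\mathbf A)\ge r(\mathbf B)$, is proved by pulling a witnessing cover back along $\psi_-$: by the morphism condition~(iii), if $B\subseteq A_-$ is bounded by $a$ then $\psi_-^{-1}(B)$ is bounded by $\phi_+(a)$, so applying this to each $B_k$ and to countable subsets of each $D_l$ shows $\psi_-^{-1}(B_k)$ bounded and $\psi_-^{-1}(D_l)$ $\sigma$-directed, and these cover $B_-$. A computation gives $r\big((\omom,\le_\infty)\times(\lambda,=)+(\mu,=)\big)=\lambda$: each copy of $(\omom,\le_\infty)$ is $\sigma$-directed (it is countably directed, cf. Lemma~\ref{l:with_vs_and}) but is not a countable union of bounded sets (as $\cof(\omom,\le_\infty)=\mathfrak b>\aleph_0$ by Lemma~\ref{l:add_cof_omom_infty}), while the $\mu$ singleton points are bounded; since every $\sigma$-directed set and every bounded set lies in a single disjoint summand, exactly $\lambda$ of the $D_l$ are forced. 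Thus $r$ isolates the fat-point count, and with Lemma~\ref{l:inj} this completes the equivalence $(\lambda,\mu)\succeq(\lambda',\mu')\Leftrightarrow\mathbf{seq}(M(\lambda,\mu))\gtq\mathbf{seq}(M(\lambda',\mu'))$; the same invariant places the top $\aleph_0$-type strictly above the whole $\Delta$-family, where $r=\aleph_0$. In the uncountable countable-cofinality case the background $S(\kappa)$ already contributes $\kappa$ many $\sigma$-directed summands, so $r$ is insensitive there, which is precisely why the theorem asserts only the positive orderings among the $N_\beta$ and the $M(\lambda,\mu)$ at those levels.
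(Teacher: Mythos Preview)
Your proposal is correct and tracks the paper's argument closely for the cross-cardinality claim and the positive orderings (both rely on Theorems~\ref{th:seq_rep} and~\ref{th:seq_real}, the sum calculus, and Lemma~\ref{l:inj}). The genuine divergence is in the strictness for the $\aleph_0$ case, i.e.\ showing $(\omom,\le_\infty)\times(\lambda,=)+(\mu,=)\gtq(\omom,\le_\infty)\times(\lambda',=)+(\mu',=)$ forces $\lambda\ge\lambda'$.

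The paper argues directly: assuming $\lambda<\lambda'$ and a morphism $(\psi_-,\phi_+)$, write $\phi_+(n)=(g_n,m_n)$ for each thin point $n$, choose $f$ with $f>^*g_n$ for all $n$, and apply (the proof of) Lemma~\ref{l:inj} to the $\lambda'$ elements $(f,m)$, $m<\lambda'$. These land in distinct summands of the source, and since there are only $\lambda<\lambda'$ fat summands some $(f,m)$ is sent to a thin summand $n$; then $\psi_-(f,m)=n$ is bounded by $n$, forcing $(f,m)(\le_\infty\times=)\phi_+(n)=(g_n,m_n)$, contradicting $f>^*g_n$.

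Your route instead packages the obstruction as a Tukey-monotone invariant $r(\mathbf{C})$ (the minimal number of $\sigma$-directed pieces in a cover of $C_-$ modulo a $\sigma$-bounded remainder). This is a legitimate alternative: the monotonicity under $\gtq$ via $\psi_-^{-1}$ is exactly as you describe, and the computation $r=\lambda$ is correct (your justification that a copy of $(\omom,\le_\infty)$ is not $\sigma$-bounded because $\cof(\omom,\le_\infty)=\mathfrak b>\aleph_0$ is valid, since bounds of a $\sigma$-bounded cover would form a countable cofinal set). What your approach buys is a structural explanation: $r$ literally counts the fat summands, and, as you observe, it becomes uninformative once $S(\kappa)$ is present --- which matches the fact that the theorem asserts only the positive orderings in the uncountable countable-cofinality case. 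The paper's argument is shorter and ad hoc; yours isolates a reusable invariant.
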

\begin{proof}
The cases are clear from Theorem~\ref{th:seq_rep} and the natural  Tukey relations.
For the countable case, note Lemma~\ref{l:small_Pk}, which simplifies the relations.
But we must justify the `if and only if' claim in the countable case.
Specifically, we need to show: $(\lambda,\mu) \succeq (\lambda',\mu')$ if and only if $(\omom,\le_\infty) \times (\lambda,=) + (\mu,=) \gtq (\omom,\le_\infty) \times (\lambda',=) + (\mu',=)$. The reverse direction is easy by Lemma~\ref{l:sum2}.

The forward direction holds if we show $(\omom,\le_\infty) \times (\lambda,=) + (\mu,=) \not\gtq (\omom,\le_\infty) \times (\lambda',=)$ if either $\lambda<\lambda'$ or $\lambda+\mu < \lambda'+\mu'$. The second possibilty is ruled out by Lemma~\ref{l:inj}.

For a contradiction, suppose $\lambda <\lambda'$ but $( \psi_-,\phi_+)$ is a morphism witnessing $(\omom,\le_\infty) \times (\lambda,=) + (\omega,=) \gtq (\omom,\le_\infty) \times (\lambda',=)$.
Note $\phi_+$ maps $(\omom\times \lambda) \oplus \omega$ to $\omom\times \lambda'$ and $ \psi_-$  has the reverse  domain and range. For each $n$ in $\omega$, write $\phi_+(n)=(g_n,m_n)$.
Pick $f$ in $\omom$ such that, for all $n$, $f >^* g_n$.
Since $\lambda < \lambda'$, by Lemma~\ref{l:inj}, for some $m$ in $\lambda'$ and $n$ in $\omega$, $ \psi_-(f,m)=n$. But $f >^* g_n$, so it is not true that $(f,m) (\le_\infty \times =) (g_n,m_n)=\phi_+(n)$. So $( \psi_-,\phi_+)$ is not a morphism.

\medskip

For the first claim, suppose $|M|=\mu < |N|=\nu$. From Lemma~\ref{l:inj} we see $\mathbf{seq}(M) \not\gtq \mathbf{seq}(N)$. We check $\mathbf{seq}(N) \gtq \mathbf{seq}(M)$. First note (from Theorem~\ref{th:seq_rep}) that $P(\mu) \times (\mu,=) \gtq \mathbf{seq}(M)$. If $\nu$ does not have countable cofinality then $\mathbf{seq}(N)\gte P(\nu)\times (\nu,=) \gtq P(\mu) \times (\mu,=)$, and done in this case. If $\nu$ does have countable cofinality, fix $(\nu_n)_n$ as in the definition of $S(\nu)$, and pick $m$ so $\nu_m > \mu$.
Now we see $\mathbf{seq}(N) \gtq S(\nu) \gtq P(\nu_m)\times (\nu_m,=) \gtq P(\mu)\times (\mu,=)$, and done in this case, as well.
\end{proof}

Unlike the `$\kappa=\aleph_0$' and `$\kappa$ with uncountable cofinality' cases,  the authors  don't know that the specified  Tukey classes in the third case are distinct.
\begin{qu}
 Are all the relations mentioned in `Case $\kappa=\aleph_\alpha > \aleph_0$ and $\cof(\kappa)=\omega$' different under the  Tukey order?
\end{qu}
Note that the first interesting case of this question is when $\kappa=\aleph_\omega$. In this case it is consistent (when $\mathfrak{b} < \aleph_\omega$) that  the `tail', of $\mathbf{seq}_{\aleph_\omega}(\mathcal{M})$ is order isomorphic to $\omega+1$.
\begin{lem} If $\mathfrak{b} < \aleph_\omega$ and $\lambda < \lambda'$ then $P(\aleph_\omega) \times (\lambda,=) + S(\aleph_\omega) \not\gtq P(\aleph_\omega)\times (\lambda',=)$.
\end{lem}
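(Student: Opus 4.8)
The plan is to separate the two relations by \emph{cofinality}, but only after localizing the comparison to a single full copy of $P(\aleph_\omega)$. So I would first record two cofinality facts. For the upper side: for each finite $n$, writing $P(\aleph_n)=\bigcup\{(([\alpha]^{<\omega})^\omega)_\infty : \alpha<\aleph_n\}$ and using that $\cof(\aleph_n)=\aleph_n>\omega$ (so every element has countable, hence bounded, support and lives in one piece $\cong P(|\alpha|)$), a routine induction on $n$ with base case $P(\omega)\gte(\omom,\le_\infty)$ (Lemma~\ref{l:small_Pk}) gives $\cof(P(\aleph_n))\le\max(\aleph_n,\mathfrak b)<\aleph_\omega$; this is exactly where $\mathfrak b<\aleph_\omega$ enters. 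Consequently each summand $P(\aleph_n)\times(\aleph_n,=)$ of $S(\aleph_\omega)$ has cofinality $<\aleph_\omega$, whence $\cof(S(\aleph_\omega))\le\aleph_\omega$. For the lower side I would prove, in ZFC, that $\cof(P(\aleph_\omega))>\aleph_\omega$: given a putative cofinal family $\mathcal G$ of size $\aleph_\omega$, write it as $\bigcup_k\mathcal G_k$ with $|\mathcal G_k|\le\aleph_k$, and build the diagonal element $x^\ast=(\{\gamma_k\})_k$ by choosing at stage $k$ some $\gamma_k\in\aleph_\omega$ outside the union $\bigcup\{G_k:(G_n)_n\in\mathcal G_k\}$, a set of size $\le\aleph_k<\aleph_\omega$. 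Then every $G\in\mathcal G_k$ has $\gamma_j\notin G_j$ for all $j\ge k$, so no member of $\mathcal G$ is $i_\infty$-above $x^\ast$, a contradiction.

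Now suppose toward a contradiction that $\mathbf A:=P(\aleph_\omega)\times(\lambda,=)+S(\aleph_\omega)\gtq \mathbf B:=P(\aleph_\omega)\times(\lambda',=)$ via a morphism $(\psi_-,\phi_+)$. I would view $\mathbf A$ as a disjoint sum of \emph{components}: the $\lambda$ ``big'' copies of $P(\aleph_\omega)$, together with the components of $S(\aleph_\omega)$, which by Lemma~\ref{big_sum=product} are $\aleph_\omega$-many ``small'' copies of the various $P(\aleph_n)$; likewise $\mathbf B$ is $\lambda'$ big copies. Each $P(\kappa)$ is countably directed (countably many $(F^j_n)_n$ are bounded by $(\bigcup_{j\le n}F^j_n)_n$), so inside one component any two elements have an upper bound, while elements in different components never do. Because $(\psi_-,\phi_+)$ is a morphism, a common $\mathbf A$-bound of $\psi_-(b),\psi_-(b')$ yields (via $\phi_+$) a common $\mathbf B$-bound of $b,b'$; contrapositively, elements of two \emph{different} copies of $\mathbf B$ (which have no common bound) are sent by $\psi_-$ into \emph{different} components of $\mathbf A$. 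Hence the sets $T_m$ of components met by $\psi_-[\text{copy }m]$ are pairwise disjoint over $m<\lambda'$. Since there are only $\lambda$ big components, at most $\lambda$ of the $T_m$ can meet a big component, and as $\lambda<\lambda'$ I may fix a copy $m^\ast$ with $T_{m^\ast}$ consisting entirely of small components.

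Finally I would restrict the morphism to this copy. Let $\mathbf C:=\sum_{c\in T_{m^\ast}}(\text{component }c)$, a sub-sum of $S(\aleph_\omega)$. Then $\psi_-$ maps copy $m^\ast$ into $\mathbf C$, and defining $\phi_+'(a)=\phi_+(a)$ when $\phi_+(a)$ lies in the range of copy $m^\ast$ (and arbitrarily otherwise) produces a morphism $\mathbf C\gtq P(\aleph_\omega)$: if $\psi_-(b)\,\mathbf C\,a$ then $\psi_-(b)\,\mathbf A\,a$, so $b\,\mathbf B\,\phi_+(a)$, and since the $\mathbf B$-relation only holds within a single copy, $\phi_+(a)$ is forced back into copy $m^\ast$, giving $b\,(\text{copy }m^\ast)\,\phi_+'(a)$. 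But then $\cof(\mathbf C)\le\cof(S(\aleph_\omega))\le\aleph_\omega<\cof(P(\aleph_\omega))$, contradicting $\mathbf C\gtq P(\aleph_\omega)$.

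I expect the main obstacle to be the ZFC lower bound $\cof(P(\aleph_\omega))>\aleph_\omega$ — a PCF-flavored diagonalization that exploits $\aleph_\omega$ being a limit and meshes with $\mathfrak b<\aleph_\omega$ only through the contrasting upper bound $\cof(S(\aleph_\omega))\le\aleph_\omega$ — together with the combinatorial localization step: the pigeonhole on the pairwise-disjoint $T_m$ must be set up so that it delivers one full copy of $P(\aleph_\omega)$ pushed \emph{entirely} into the small part, which is precisely what lets the single cofinality gap close the argument.
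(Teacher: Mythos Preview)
Your proposal is correct and follows essentially the same strategy as the paper: establish the cofinality gap $\cof(S(\aleph_\omega))\le\aleph_\omega<\cof(P(\aleph_\omega))$ (the paper routes the lower bound through $([\aleph_\omega]^\omega,ii)$ rather than diagonalizing on $P(\aleph_\omega)$ directly, but the argument is the same), and then exploit the component structure coming from Lemma~\ref{l:inj} to push a copy of $P(\aleph_\omega)$ into the $S(\aleph_\omega)$ part. The only organizational difference is that the paper first fixes a small cofinal $C\subseteq S(\aleph_\omega)$, finds a single $(G_n)_n$ missed by $\phi_+(C)$, and then invokes Lemma~\ref{l:inj} on the family $\{((G_n)_n,\alpha):\alpha<\lambda'\}$, whereas you pigeonhole an entire copy $m^\ast$ into $S(\aleph_\omega)$ and restrict the morphism; both yield the same contradiction. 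One small point: in your diagonalization for $\cof(P(\aleph_\omega))>\aleph_\omega$ you should take the $\mathcal G_k$ increasing, so that $G\in\mathcal G_k$ implies $G\in\mathcal G_j$ for all $j\ge k$ and hence $\gamma_j\notin G_j$.
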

\begin{proof} First note that, for $n \in \omega$, $\cof(P(\aleph_n) \times (\aleph_n,=)) = \mathfrak{b}.\aleph_n$.
This can be established by induction on $n$.
It follows that, $\cof(S(\aleph_\omega))  = \mathfrak{b}.\aleph_\omega$.

Now we have, $P(\aleph_\omega) \gtq ([\aleph_\omega]^\omega,ii)$ and $\cof([\aleph_\omega]^\omega,ii)\ge \aleph_{\omega+1}$, and hence $\cof(P(\aleph_\omega))\ge \aleph_{\omega+1}$.
The claimed  Tukey relation is witnessed by $\phi_+$ defined by $\phi_+((F_n)_n) = \bigcup_n F_n$ which carries cofinal subsets to cofinal subsets.

We verify this last  claim.
To this end, let $\mathcal{C}$ be a subset of $[\aleph_\omega]^{\le \omega}$ such that for all $A$ in $[\aleph_\omega]^\omega$ there is a $C$ from $\mathcal{C}$ such that $A \cap C$ is infinite. We show $|\mathcal{C}| > \aleph_\omega$.
For a contradiction, suppose $|\mathcal{C}| \le \aleph_\omega$, and enumerate it, possibly with repeats, as $\mathcal{C}=\{C_\alpha : \alpha < \aleph_\omega\}$. Let $\mathcal{C}_n = \{C_\alpha : \alpha < \aleph_n\}$, so $\mathcal{C}$ is the increasing union of the $\mathcal{C}_n$.
Note $|\mathcal{C}_n| \le \aleph_n$, so $|\bigcup \mathcal{C}_n| \le \aleph_n$, and pick $\alpha_n \in \aleph_\omega \setminus \bigcup \mathcal{C}_n$. Let $A=\{\alpha_n : n \in \omega\}$.
Now take any $C$ from $\mathcal{C}$. Then $C=C_\beta$ for a $\beta < \aleph_m$ and for some $m$. By construction, $C \cap A$ is contained in $\{\alpha_0,\ldots,\alpha_m\}$, and in particular is finite. Contradicting the cofinality of $\mathcal{C}$.

\smallskip

Now for the claim of the lemma. For a contradiction suppose $( \psi_-,\phi_+)$ is a morphism showing $P(\aleph_\omega) \times (\lambda,=) + S(\aleph_\omega) \gtq P(\aleph_\omega)\times (\lambda',=)$.
Let $C$ be cofinal in $S(\aleph_\omega)$ where $|C| \le \aleph_\omega$ (use $\mathfrak{b} < \aleph_\omega$ here).
Let $C'=\{ (F_n)_n : ((F_n)_n,\alpha)$ in $\phi_+(C)\}$. Then $|C'| < \aleph_{\omega+1}\le \cof(P(\aleph_\omega))$, so $C'$ not cofinal, and we can pick $(G_n)_n$ such that for all $((F_n)_n,\alpha)$ from $C$ we have $\neg ((G_n)_n i_\infty (F_n)_n)$.
Consider $\{ ((G_n)_n,\alpha) : \alpha < \lambda'\}$. As $\lambda < \lambda'$ by Lemma~\ref{l:inj} there is an $\alpha$ such that $ \psi_-((G_n)_n,\alpha)$ is in $S(\aleph_\omega)$.
Now, as $C$ is cofinal in $S(\aleph_\omega)$, for some $((F_n)_n,\beta)$ in $C$ we have
$ \psi_-((G_n)_n,\alpha) (i_\infty\times =) ((F_n)_n,\beta)$.
But it is not true that $((G_n)_n,\alpha) (i_\infty\times =) ((F_n)_n,\beta)$ (by choice of $(G_n)_n$). This contradicts $( \psi_-,\phi_+)$ a morphism.
\end{proof}

\begin{thm}[Counting Types] \label{th:count_seq}
If $\mathfrak{c}=\aleph_\alpha$
then there are $|\alpha|.\aleph_0$-many  Tukey types of $\mathbf{seq}(M)$, where $M$ is separable metrizable.

\medskip

In particular,

\smallskip

(0) there are at least $\aleph_0$-many and no more than $\mathfrak{c}$-many  Tukey types of $\mathbf{seq}(M)$, where $M$ is separable metrizable,

\smallskip

    (1) there are countably many  Tukey types of $\mathbf{seq}(M)$, where $M$ is separable metrizable if and only if $\mathfrak{c}< \aleph_{\omega_1}$, and

\smallskip

    (2) there are continuum, $\mathfrak{c}$,  many  Tukey types of $\mathbf{seq}(M)$, where $M$ is separable metrizable if and only if $\mathfrak{c}$ is a fixed point of the aleph function with uncountable cofinality.
    Hence the first cardinal value of $\mathfrak{c}$ so there are $\mathfrak{c}$-many types is the $\omega_1$st fixed point of the aleph function.
\end{thm}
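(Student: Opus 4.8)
The plan is to obtain the exact count by summing the number of Tukey types contributed at each cardinality, and then to read off (0)--(2) as cardinal arithmetic. The two structural inputs are both from Theorem~\ref{th:seq_gt}. First, the \emph{cross-cardinality} fact: if $M$ and $N$ are separable metrizable with $|M|<|N|$ then $\mathbf{seq}(M)<_{T}\mathbf{seq}(N)$, so the (nontrivial) types living at distinct cardinalities are automatically distinct. Second, the \emph{within-cardinality} parametrization: for $\kappa=\aleph_\beta\le\mathfrak{c}$ the classes of $\mathbf{seq}_\kappa(\mathcal{M})$ are indexed by $\Delta+1$ when $\kappa=\aleph_0$, by a single point when $\cof(\kappa)\ne\omega$, and by $\Delta+(\beta+1)$ when $\kappa>\aleph_0$ and $\cof(\kappa)=\omega$. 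Since $\Delta$ is countable, in every case $|\mathbf{seq}_\kappa(\mathcal{M})|\le\max(\aleph_0,|\beta|)$, and Realization (Theorem~\ref{th:seq_real}) guarantees every infinite $\kappa\le\mathfrak{c}$ is actually attained.

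For the upper bound I would sum over all infinite cardinals $\aleph_\beta\le\mathfrak{c}=\aleph_\alpha$, that is over $\beta\le\alpha$: there are $|\alpha+1|$ summands, each at most $\max(\aleph_0,|\alpha|)$, so the total is at most $|\alpha+1|\cdot\max(\aleph_0,|\alpha|)=|\alpha|\cdot\aleph_0$ (checking separately the finite-$\alpha$ and infinite-$\alpha$ regimes). For the matching lower bound, choosing one type at each cardinality $\aleph_\beta$, $\beta\le\alpha$, yields $|\alpha+1|$ pairwise distinct types by cross-cardinality distinctness, while the $\aleph_0$ layer alone already contributes $\aleph_0$ distinct types (in the countable case Theorem~\ref{th:seq_gt} establishes that the $\Delta+1$ classes are genuinely distinct); together this is $\ge\max(\aleph_0,|\alpha|)=|\alpha|\cdot\aleph_0$. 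Hence the count is exactly $|\alpha|\cdot\aleph_0$. A point worth flagging is that this total is pinned down even though it remains open (see the Question preceding the theorem) whether all the listed types at a singular $\cof=\omega$ cardinality are distinct: the uncertainty is absorbed precisely because the upper bound from the size of the parametrizing poset and the lower bound from cross-cardinality distinctness coincide.

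I would then deduce the three consequences. For (0), from $|\alpha|\le\alpha\le\aleph_\alpha$ we get $\aleph_0\le|\alpha|\cdot\aleph_0\le\aleph_\alpha\cdot\aleph_0=\mathfrak{c}$. For (1), $|\alpha|\cdot\aleph_0=\aleph_0$ iff $|\alpha|\le\aleph_0$ iff $\alpha<\omega_1$ iff $\mathfrak{c}=\aleph_\alpha<\aleph_{\omega_1}$. For (2), $|\alpha|\cdot\aleph_0=\aleph_\alpha=\mathfrak{c}$ forces $|\alpha|=\aleph_\alpha$ (as $\mathfrak{c}>\aleph_0$), and then $\aleph_\alpha=|\alpha|\le\alpha\le\aleph_\alpha$ collapses to $\alpha=\aleph_\alpha$, i.e.\ $\mathfrak{c}$ is a fixed point of the aleph function; conversely a fixed point gives $|\alpha|=\alpha=\aleph_\alpha$ and hence count $\mathfrak{c}$. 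The uncountable-cofinality clause is automatic from K\"onig's theorem, $\cof(\mathfrak{c})=\cof(2^{\aleph_0})>\aleph_0$, so it is really a feature rather than an extra hypothesis; it also shows the least fixed point (which has cofinality $\omega$) cannot equal $\mathfrak{c}$. Tracing the normal enumeration of the fixed points, both successor-indexed and countable-cofinality-limit-indexed fixed points are singular of cofinality $\omega$, so the least fixed point of uncountable cofinality is the $\omega_1$st one, giving the final sentence.

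The main obstacle I anticipate is not conceptual but arithmetical bookkeeping: keeping the quantities $|\alpha|$, $|\alpha+1|$, $\max(\aleph_0,|\alpha|)$ and $|\alpha|\cdot\aleph_0$ correctly aligned across the finite-$\alpha$, countable-infinite-$\alpha$ and uncountable-$\alpha$ regimes, and correctly invoking K\"onig's theorem to convert the bare fixed-point condition into the stated form and to locate the $\omega_1$st fixed point as the least admissible value of $\mathfrak{c}$.
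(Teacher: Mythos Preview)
Your proposal is correct and follows essentially the same route as the paper's proof: both derive the count $|\alpha|\cdot\aleph_0$ from the structural Theorems~\ref{th:seq_rep}, \ref{th:seq_real}, and~\ref{th:seq_gt}, then read off (0)--(2) as cardinal arithmetic. Your treatment is in fact more explicit than the paper's---you spell out the lower bound (cross-cardinality distinctness together with the $\aleph_0$ genuinely distinct types at the countable layer), flag why the unresolved distinctness question at singular $\kappa$ does not affect the count, and justify the location of the $\omega_1$st fixed point, all of which the paper leaves implicit or terse.
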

\begin{proof} Let $\mathfrak{c}=\aleph_\alpha$.
   Consider any cardinal $\aleph_\beta \le \mathfrak{c}$. From Theorems~\ref{th:seq_rep}, \ref{th:seq_gt}  and~\ref{th:seq_real} we see that separable metrizable spaces of cardinality $\aleph_\beta$ contribute at least one and no more than $|\beta|.\aleph_0$ many $\mathbf{seq}(M)$  Tukey types.
   Thus there are $\sup \{ |\beta|.\aleph_0 : \beta \le \alpha\}=|\alpha|.\aleph_0$ types.

   Hence if $\mathfrak{c}=\aleph_\alpha$ where $\alpha < \omega_1$ then there are only countably many types, but if $\mathfrak{c} \ge \aleph_{\omega_1}$ then there are at least $\omega_1$ types.
   It follows there are countably many types precisely when $\mathfrak{c}<\aleph_{\omega_1}$.
   On the other hand, for there to be continuum many types we must have $\alpha=\aleph_\alpha$, and so $\mathfrak{c}$ is a fixed point of the aleph function. Recall also, that $\mathfrak{c}$ has uncountable cofinality.
\end{proof}
Note that it is indeed consistent that the number of types of $\mathbf{seq}(M)$ equals $\mathfrak{c}$. Via Cohen forcing we can ensure $\mathfrak{c}$ is any aleph without countable cofinality (so, for example the $\omega_1$st fixed point of the aleph function) and the forcing does not disturb the aleph fixed points.

\bibliographystyle{plain}

\bibliography{references}

\end{document}